\documentclass [12pt]{amsart}
\usepackage[utf8]{inputenc}

%
\usepackage{amsmath,amssymb,amsthm,amsfonts}
\usepackage{mathtools}%

\usepackage[main=english,french]{babel}%
\usepackage{bbm}%
\usepackage{mathrsfs}%

\usepackage{tikz,graphicx,color}
\usepackage{tikz-cd}%
\usepackage{tikz-3dplot}%

\usetikzlibrary{calc}%
\usetikzlibrary{arrows}%
\usetikzlibrary{shapes}%
\usetikzlibrary{patterns}%
\usetikzlibrary{positioning}%
\usetikzlibrary{arrows.meta}
\usetikzlibrary{decorations.markings}

%
\usepackage[margin=1in]{geometry}

\usepackage{soul}
\usepackage{accents}
\usepackage{enumitem}
\usepackage{letltxmacro}
\usepackage{thmtools,etoolbox}



\usepackage{nameref,hyperref}
\usepackage[capitalize]{cleveref}

\usepackage{tikz-cd}
\usepackage{stmaryrd}

\usepackage{xspace}

\usepackage{comment}

\usepackage{stmaryrd}

\usepackage{graphicx}
\usepackage{amsfonts}
\usepackage{amsmath}
\usepackage{amssymb}
\usepackage{amsthm,color}
\usepackage{epsfig,latexsym}
\usepackage{booktabs}
\usepackage{advdate}
\usepackage{tikz, tikz-cd}
\usepackage{physics}
\usepackage{blkarray, array}
\setlength{\BAextrarowheight}{3pt}
\newcolumntype{L}{>{$}l<{$}}

\newtheorem{Thm}{Theorem}
\newtheorem*{Thm*}{Theorem}

\newtheorem{Cor}{Corollary}
\newtheorem{Lem}{Lemma}
\newtheorem{Prop}{Proposition}
\newtheorem*{Prop*}{Proposition}

\newtheorem{Def}{Definition}

\newtheorem{Exa}{Example}

\newcommand{\C}{\mathbb{C}}
\newcommand{\Z}{\mathbb{Z}}

\def\rin{\rotatebox[origin=c]{90}{$\in$}}

\def\+{\includegraphics[scale=0.3]{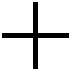}}
\def\-{\includegraphics[scale=0.3]{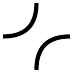}}

\newcommand{\ra}{\ensuremath{\xrightarrow}}

\newcommand\rd[1]{{\color{red}#1}}
\newcommand\teal[1]{{\color{teal}#1}}

\newcommand{\mf}{\ensuremath{\mathfrak}}

\DeclareMathOperator{\codim}{codim}

\newcommand\minip[3]{\noindent
  \begin{minipage}{#1\linewidth}
    \hskip #2
    #3
  \end{minipage}
}

\begin{document}


\date{\today}

\author{Joseph Fluegemann}
\title{Smooth Points on Positroid Varieties}

\email{jkf68@cornell.edu}
\date{\today}

\maketitle

\begin{abstract}

In the Grassmannian $Gr_\C(k,n)$ we have positroid varieties $\Pi_f$, each indexed by a bounded affine permutation $f$ and containing torus-fixed points $\lambda \in \Pi_f$. In this paper we consider the partially ordered set consisting of quadruples $(k,n,\Pi_f,\lambda)$ (or \textit{(positroid) pairs} $(\Pi_f,\lambda)$ for short). The partial order is the ordering given by the covering relation $\lessdot$ where $(\Pi_f',\lambda') \lessdot (\Pi_f,\lambda)$ if $\Pi_f'$ is obtained by $\Pi_f$ by \textit{deletion} or \textit{contraction.} Using the results of Snider \cite{Sni}, we know that positroid varieties can be studied in a neighborhood of each of these points by \textit{affine pipe dreams.} Our main theorem provides a quick test of when a positroid variety is smooth at one of these given points. It is sufficient to test smoothness of a positroid variety by using the main result to test smoothness at each of these points. These results can also be applied to the question of whether Schubert varieties in flag manifolds are smooth at points given by 321-avoiding permutations, as studied in \cite{GraKre}. We have a secondary result, which describes the minimal singular positroid pairs in our ordering - these are the positroid pairs where any deletion or contraction causes it to become smooth.  
\end{abstract}

\tableofcontents

\section{Introduction}
 \subsection{Motivation from Schubert varieties}

In order to provide context of our smoothness problem for those familiar with Schubert varieties, we note both the similarities and differences compared to the case of smoothness for Schubert varieties. In flag manifolds, the smooth Schubert varieties were classified via a pattern avoidance technique by Lakshmibai and Sandiya \cite{LS90}. Note that if we just restrict to the Grassmannian, it turns out that the smooth Schubert varieties are essentially trivial - for any $Gr(k,n)$, the smooth Schubert varieties are just the ones that are subGrassmannians (isomorphic to $Gr(k',n')$ for $k'\leq k, n'\leq n$). We emphasize that in this paper, we are not classifying smooth positroid varieties, as was done in \cite{BW22}. Rather, we are only considering smoothness at the fixed points, one fixed point at a time. 

Inside a given Schubert variety $X_w$, for any fixed point $v$, the Schubert subvariety $X_v$ is the $B_-$-orbit closure of the point $v$, and the singular locus for any given Schubert variety is invariant under the action of $B_-$, so the singular locus must consist of a union of Schubert subvarieties. Within $X_w$, these Schubert varieties $X_v$ on which $X_w$ is singular are obviously closed under going up in the Bruhat order, since if $X_w$ is singular along $X_v$ then $X_w$ will also be singular on any subset of $X_v$. Then, the natural question of what are the maximal (that is, minimal in Bruhat order) such Schubert varieties $X_v$  was answered by the 321-hexagon-avoiding permutations described by Billey and Warrington \cite{BilWar}. 

Another question of checking whether $X_w$ is singular at a $T$-fixed point $v$ was answered by Dale Peterson: consider $ s= \# \{ v r_\alpha \geq w \}$ where $\alpha$ is over positive roots. Since each $v r_\alpha \geq w$ produces a linearly independent vector in the tangent space, $s \leq \dim T_v (X_w)$. Thus is $s > \dim (X_w)$ then the tangent space is too large so $X_w$ is singular at $v$. Conversely if the group is simply-laced, as it is in the case when the group is a torus, then $s = \dim T_v (X_w)$ implies that $X_w$ is smooth at $v$.

We answer the corresponding questions for positroid varieties, both the question of whether a positroid variety $\Pi_f$ is smooth at a given fixed point $\lambda$, and also what the minimal singular positroid varieties are, under an ordering given by deletion and contraction.

\subsection{The Results}\label{sub:theresult}

First, we describe our poset (with more details given in Subsection \ref{subsection:delcontr}). We make use of the following two subGrassmannians inside $Gr(k,n)$:
\\

\noindent $\Pi_{del,i} := \{V\in Gr(k,n) \mid  V\leq Proj_i(\C^n)\cong \C^{n-1} \} \cong Gr(k,n-1)$ 
\\

\noindent $\Pi_{contr,i} := \{V\in Gr(k,n) \mid  V\geq \C_i \} \cong Gr(k-1,n-1)$
\\

The $T$-fixed points on $Gr(k,n)$ are given by the span of $k$ coordinates lines, which we have denoted by $\lambda$, so there are $n\choose k$ such fixed points. Given such a $T$-fixed point $\lambda$ in $Gr(k,n)$ and a coordinate $x_i$, then either that fixed point $\lambda$ uses $x_i$ or it does not. This can be rephrased as: for each $T$-fixed point $\lambda$ and each $i\in \{1,...,n\}$, either $\lambda \in \Pi_{del,i}$ or $\lambda \in \Pi_{contr,i}$, but not both. We can intersect a positroid variety $\Pi_f$ with one of these two subGrassmannians, and what we obtain is a smaller positroid variety $\Pi_{f'}$ and a point $\lambda'$ on it. Call this $\Pi_{f'}$ the \textbf{deletion} or \textbf{contraction} respectively. In other words: 
\\

(1) If we have a $T$-fixed point $\lambda$ on a positroid variety $\Pi_f$ and we have one of the coordinates $x_i$, then we can either delete or contract $x_i$, and we will get to another point $\lambda'$ on a smaller positroid variety $\Pi_{f'}$.
\\

(2) Furthermore, if the point $\lambda$ we started out with is smooth on $\Pi_f$, then the point $\lambda'$ after deletion or contraction will again be smooth on $\Pi_f \cap \Pi_{del,i}$ or $\Pi_f\cap \Pi_{contr,i}$ (this will be proved as Proposition \ref{prop:delsmooth}). 
\\

Because of (1), we can make an (infinite) poset on the set of quadruples $(k,n,\Pi_f,\lambda)$ with deletion and contraction as the covering relations. By (2), the set of singular quadruples is closed under going upwards. We can visualize the poset as:

 \begin{figure}[htbp]
	\includegraphics[scale=0.5,clip=true]{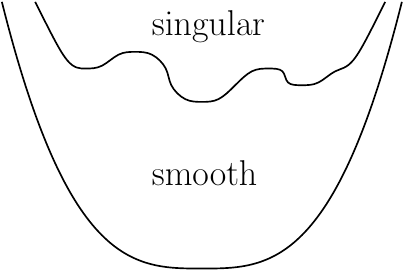}
\end{figure}

Given this poset, we can consider the points along the boundary (the minimal singular pairs):

 \begin{figure}[htbp]
	\includegraphics[scale=0.5,clip=true]{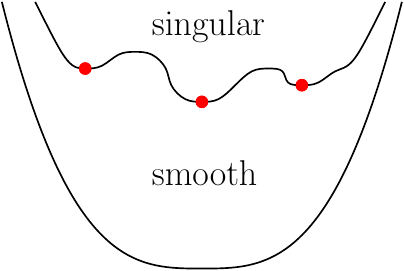}
\end{figure}

Since $n$ decreases as one goes down in the ordering, we can only go down in the order a finite number of steps from any given point. By this finiteness of going down, a pair is singular if and only if it is greater than one of these minimal singular pairs in the ordering. We give these minimal singular pairs a name:

\begin{Def}[Atomic Positroid Pair]
A positroid variety $\Pi_f$ is \textbf{atomic} on an open patch $U_\lambda$ if it is singular at $\lambda$, but any deletion or contraction (of any column) causes this positroid variety to be smooth (or empty) at $\lambda$. We call $(\Pi_f, \lambda)$ an \textbf{atomic positroid pair}.
\end{Def}

\vspace{1cm}

Next, we recall that an affine pipe dream \textbf{shape} is the boundary of an affine pipe dream \cite{Sni}. More details can be found in Section \ref{section:affinepipe}. A quick example is provided by the following. 

\begin{figure}[htbp]\includegraphics[scale=0.7,clip=true]{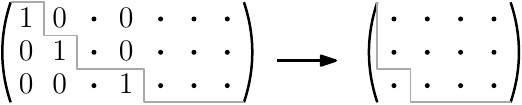}\end{figure}

Here, we are in $Gr(3,7)$ and $\lambda=\{1,2,4\}$ (these are the full rank columns). We ``collapse" these columns, such that they are now depicted by the vertical segments in the \textbf{distinguished path} (staircase line) running from the NW to SE corners. The affine pipe dream's shape is an infinite strip obtained by taking this distinguished path and placing an exact copy directly above it and another copy to the right. This continues forever in the NW and SE directions as depicted in the figure below (note that the numbers within the pipe dream shape can be ignored for now). 

\begin{figure}[htbp]
\includegraphics[scale=0.7,clip=true]{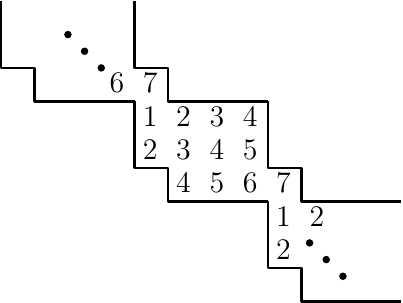}
\end{figure}

\newpage

If the numbers in $\lambda$ are consecutive, say $\lambda=\{i,i+1,i+2\}$, then the shape obtained will be an infinite string of rectangles touching in the northwest/southeast corners, as depicted in the following:

 \begin{figure}[htbp]
	\includegraphics[scale=0.6,clip=true]{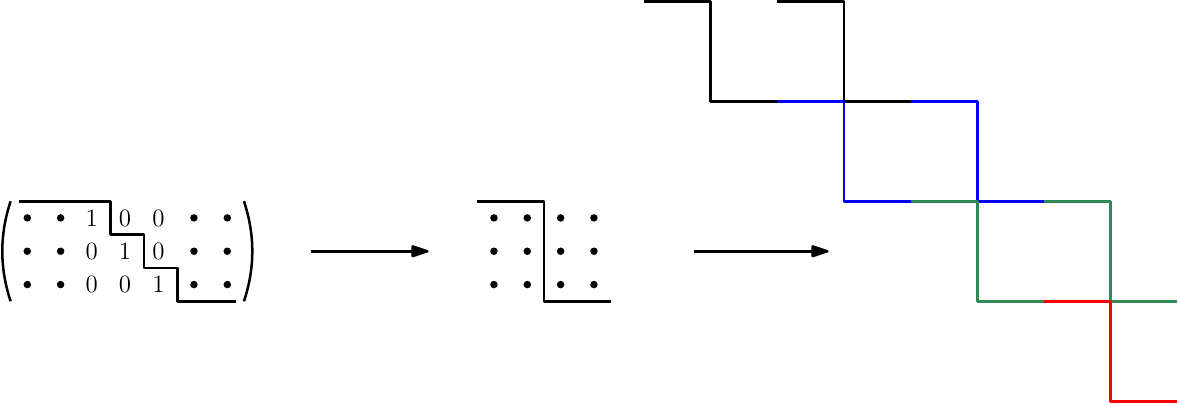}
\end{figure}

We remark that Grassmannian duality: $Gr(k,n)\sim Gr(n-k,n)$ reflects the pipe dreams shapes across the line $x=-y$ (and this also switches deletion and contraction).
\\

Additionally, this is what the \textbf{maximal rectangles} look like in our affine shapes:

 \begin{figure}[htbp]
	\includegraphics[scale=0.5,clip=true]{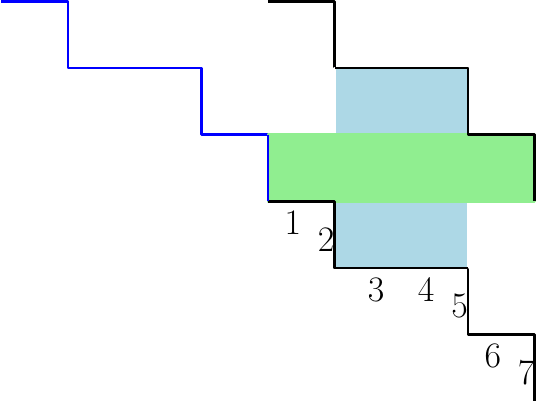}
\end{figure}

One goal we have is to quickly determine whether a positroid pair is singular at a given point. Our main method uses the following Theorem \ref{thm:smoothone}:

\begin{Thm*}
A positroid variety $\Pi_f$ is smooth at the point $\lambda$ (it meets $\lambda$ and is smooth there) if and only if there is a single affine pipe dream representative \cite{Sni} of the affine permutation $f$ on the open set $U_\lambda$. Specifically $\Pi_f$ meets the point $\lambda$ if and only if there exists (at least) one affine pipe dream, and it is smooth if and only if that affine pipe dream is unique. Thus, we can take $\Pi_f$ and look at pipe dreams for all choices of $\lambda$ to test for smoothness. 
\end{Thm*}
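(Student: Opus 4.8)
The plan is to prove both assertions inside the explicit affine model of a neighbourhood of $\lambda$ furnished by Snider \cite{Sni}, by recognising that model as a cone. Write $U_\lambda\cong\A^{k(n-k)}$ for the standard affine chart around the $T$-fixed point $\lambda$ (reduced row-echelon matrices with pivots in the columns of $\lambda$, normalised so that $\lambda$ is the origin), and set $V:=U_\lambda\cap\Pi_f$. From \cite{Sni} I will take three inputs. (a) $V$ is an explicit affine subvariety of a space whose coordinates are indexed by the boxes of (a fundamental domain of) the affine pipe dream shape attached to $\lambda$, cut out by the cyclic rank conditions defining $\Pi_f$; these equations are homogeneous, so $V$ is a cone with the origin $\lambda$ as its unique, attracting fixed point. (b) $V$ is non-empty exactly when $f$ admits at least one affine pipe dream of that shape --- this is already the first assertion of the theorem, so from now on I assume $V\neq\varnothing$. (c) There is a Gr\"obner degeneration of $V$ whose special fibre is the \emph{reduced} union $\bigcup_P L_P$ of coordinate subspaces indexed by the affine pipe dreams $P$ of $f$ of that shape, each $L_P$ of dimension $\dim\Pi_f$; equivalently, for the defining term order, $\mathrm{in}(I_V)$ is the Stanley--Reisner ideal of a simplicial complex $\Delta$ whose facets are those pipe dreams. (This is the positroid analogue of the Knutson--Miller degeneration of a matrix Schubert variety onto the Stanley--Reisner scheme of a subword complex whose facets are the reduced pipe dreams.)

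Granting (a)--(c), the second assertion --- $\Pi_f$ is smooth at $\lambda$ iff $f$ has a unique affine pipe dream of that shape --- follows from a cycle of implications. If $\Pi_f$ is smooth at $\lambda$, then $V$ is smooth at the vertex of the cone, and a cone smooth at its vertex is a linear subspace (present $V$ locally as the graph of an analytic map off its tangent space; homogeneity of degree one together with the vanishing of that map's differential at the vertex forces the map to vanish). A linear $V$ has $I_V$ generated by linear forms, so Gaussian elimination makes $\mathrm{in}(I_V)$ generated by a set of variables, i.e. $\Delta$ is a single simplex, i.e. $f$ has a unique affine pipe dream of the shape of $\lambda$. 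Conversely, if $f$ has a unique such pipe dream then $\Delta$ is a simplex, so $\mathrm{in}(I_V)=(x_i:i\notin S)$ is a monomial prime with $|S|=\dim\Pi_f$; then $\C[V]$ is a reduced irreducible standard-graded ring (positroid varieties are varieties) whose Hilbert series equals that of a polynomial ring in $|S|$ variables, forcing $\C[V]$ to be that polynomial ring, so $V\cong\A^{|S|}$ is smooth and hence $\Pi_f$ is smooth at $\lambda$. This closes the cycle and proves the theorem; it also shows that for a smooth positroid pair $U_\lambda\cap\Pi_f$ is an affine space (and the inheritance statement, Proposition \ref{prop:delsmooth}, is consistent with, though not used in, this argument).

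The step I expect to be the genuine obstacle is assembling the inputs (a)--(c) from \cite{Sni} --- in particular, verifying that the model $V$ is presented by honestly homogeneous equations (so that it is a genuine cone and the cone lemma applies), and, above all, establishing that the pipe-dream degeneration (c) is \emph{reduced}, with the facets of $\Delta$ in honest bijection with the affine pipe dreams of the prescribed shape. In the matrix Schubert setting the reducedness is precisely the shellability (indeed vertex-decomposability) of subword complexes; for positroid patches one needs the analogous combinatorial and commutative-algebraic statement for an ``affine'' pipe-dream complex, and proving it --- or extracting it cleanly from \cite{Sni} --- is the crux. By contrast the cone lemma, the Gaussian-elimination step, the Hilbert-series argument and the Stanley--Reisner dictionary are all routine.
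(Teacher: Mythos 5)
Your outline is sound, but it takes a genuinely different route from the paper, so let me compare. The paper also begins from the cone structure of $V=\Pi_f\cap U_\lambda$ --- your input (a) is precisely the cominuscule observation, proved as a lemma in Section 4, that a circle inside $T$ acts on $U_\lambda$ with all weights $1$, which is why the rank-condition minors in this chart are homogeneous --- but from there it computes the multiplicity equivariant-cohomologically: Rossmann's theorem identifies $\mathrm{mult}_\lambda(\Pi_f)$ with the coefficient of $h^{\codim \Pi_f}$ in $[\Pi_f\cap U_\lambda]_{\C^\times}$, Snider's isomorphism plus Kleiman transversality convert this class to the restriction $[X_{v(f)}]|_{v(\lambda)}$ of an affine Schubert class, and the AJS/Billey formula expands that restriction as a sum with exactly one summand per affine pipe dream, each specializing to $h^{\codim \Pi_f}$. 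You replace AJS/Billey by a reduced Gr\"obner degeneration onto the Stanley--Reisner scheme of the affine pipe dream complex, and replace the multiplicity computation by the cone-smooth-at-its-vertex-is-linear lemma together with a Hilbert series comparison; both directions of your cycle are correct as written (and reducedness of $\Pi_f$, which you invoke, is not even needed for the Hilbert series step). Your route buys the extra fact that a smooth patch is literally an affine space, and it recovers the paper's statement $\mathrm{mult}_\lambda(\Pi_f)=\#\{\text{pipe dreams}\}$ for free as $\deg V=\#\{\text{facets}\}$. The one substantive input you flag as the crux, your (c), is indeed not proved in \cite{Sni} alone, but it is available in the paper's own bibliography: Knutson's ``Schubert patches degenerate to subword complexes'' \cite{KnScubDeg} supplies the reduced Gr\"obner degeneration of the Kazhdan--Lusztig patch $X_{v(f)}\cap X_\circ^{v(\lambda)}$ in the (Kac--Moody, hence affine) flag variety onto the Stanley--Reisner scheme of $\Delta(Q_\lambda,f)$, and Snider's $T$-equivariant isomorphism of that patch with $\Pi_f\cap U_\lambda$ together with her identification of the facets of $\Delta(Q_\lambda,f)$ with affine pipe dreams (Proposition \ref{prop:subwdcomp}) transports it to your setting. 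With that citation supplied, your argument closes; the only cosmetic caveat is that under a weight (rather than term) order a linear ideal need not have monomial initial ideal a priori, but since (c) already asserts the initial ideal is the Stanley--Reisner (hence monomial) ideal, the Hilbert function comparison still forces it to be generated by variables.
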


By applying the previous theorem, we can prove the Main Theorem \ref{thm:mainthm}. To state this theorem, we first need two definitions:

\begin{Def}
We have a \textbf{partition} in the northwest of a rectangle if the maximal connected set of crosses containing the cross in the northwest corner (it can be empty) is of the following type of shape: if each successive row as we go down consists of a consecutive crosses coming from the left edge of the rectangle such that the number of such crosses is nonincreasing (equivalently, if we take the axes as going right and down, given any cross at $(a,b)$, all $(i,j)$ with $i\leq a$ and $j\leq b$ must be crosses). A partition in the southeast is the same, but reflected across the southwest-northeast axis. Here is an example of a northwest partition:

 \begin{figure}[htbp]
	\includegraphics[scale=0.4,clip=true]{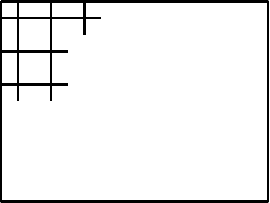}
\end{figure}

\end{Def}

\begin{Def}
A pipe dream in a rectangular shape \textbf{reduces to SE/NW partitions} if, after deleting all entire rows and columns of crosses, the result is a partition in the southeast and northwest corners. 

This is equivalent to the condition that all crosses must be contained in: (1) entire rows of crosses, (2) entire columns of crosses, (3) a partition in the northwest, (4) a partition in the southeast.
\end{Def}

\begin{Exa}
The following is an example of a rectangular pipe dream that reduces to SE/NW partitions:
 \begin{figure}[htbp]
	\includegraphics[scale=0.5,clip=true]{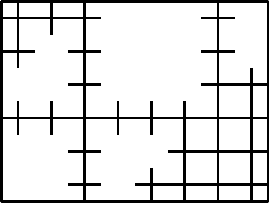}
\end{figure}
\end{Exa}

\begin{Thm*}[Main Theorem]
If each maximal rectangle in the pipe dream for $\Pi_f$ on $U_\lambda$ reduces to SE/NW partitions, then $\Pi_f$ is smooth at $\lambda$; otherwise, it is singular. 
\end{Thm*}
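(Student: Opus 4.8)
The plan is to convert the smoothness question into a \emph{counting} question via Theorem~\ref{thm:smoothone}: $\Pi_f$ meets $\lambda$ and is smooth there precisely when there is a unique affine pipe dream for $f$ on $U_\lambda$. So the task becomes to prove that the affine pipe dream for $f$ on $U_\lambda$ is unique if and only if, in every maximal rectangle of the affine shape, it reduces to SE/NW partitions. I would organize this in two layers: a ``localization'' step reducing everything to a single rectangle, and a purely combinatorial analysis of which rectangular pipe dreams are rigid.

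For the localization step I would use that any two affine pipe dreams for the same $f$ are connected by a sequence of local chute/ladder moves --- the affine analogue of the classical connectivity of reduced pipe dreams under chute moves, available from the combinatorics in \cite{Sni} --- and that every such move is supported on a small window (two adjacent rows, or two adjacent columns) contained in a single maximal rectangle of the infinite strip. The latter holds because consecutive maximal rectangles meet only at corner points, where the strip pinches, so no chute/ladder window can straddle two of them. Consequently a pipe dream $D$ for $f$ is the unique representative if and only if $D$ admits no move at all, which in turn holds if and only if the restriction $D|_R$, viewed as a stand-alone rectangular pipe dream, admits no move for every maximal rectangle $R$. This reduces the theorem to the statement: \emph{a rectangular pipe dream admits no chute or ladder move if and only if it reduces to SE/NW partitions.} In particular this shows a posteriori that if one pipe dream reduces to SE/NW partitions in all maximal rectangles then it is the only pipe dream, which reconciles the ``the pipe dream'' phrasing in the statement.

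I would prove the rectangular statement in both directions. For ``reduces $\Rightarrow$ rigid'': first delete every row and every column of $R$ that is entirely crosses; these carry pipes running straight through, no chute/ladder window meets such a full row or column, so deleting them cannot change whether a move exists. What remains lives in a smaller rectangle and, by hypothesis, is a partition of crosses in the NW corner together with a partition of crosses in the SE corner, occupying no common full row or column and hence separated. A NW partition of crosses is the dominant (Rothe-diagram) pipe dream, which admits no chute or ladder move: both moves would require some lower row (resp.\ rightward column) whose run of crosses protrudes past that of the row above (resp.\ column to the left), contradicting the nonincreasing profile of a partition. The SE case is identical after the Grassmann-duality reflection across $x=-y$, which swaps NW and SE; and since the two partitions are separated they cannot combine to produce a move. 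For ``not reduces $\Rightarrow$ movable'': if $D|_R$ does not reduce to SE/NW partitions then there is a cross $c$ lying in no full row, no full column, and neither the NW partition-block nor the SE partition-block. Choosing $c$ extremal (smallest row index, then smallest column index, among all such crosses), the three failures force a small L-shaped or domino configuration of crosses and elbows around $c$ of exactly the form on which a chute or ladder move acts \emph{nontrivially}; exhibiting that move produces a second pipe dream for $f$, hence (with Theorem~\ref{thm:smoothone}) a singularity.

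The main obstacle is this last implication: pinning down the extremal bad cross and checking, over the handful of cases for the local picture around it (whether the offending elbow in its row lies to its left or right, whether the block it ``should'' extend is the NW or the SE one, and how it interacts with any full row or column through it), that the available move genuinely changes the diagram rather than being a degenerate window that fixes it. A secondary technical point, needed for localization, is a precise description of how the maximal rectangles sit inside the infinite strip for arbitrary (not merely consecutive) $\lambda$, so as to be certain no movable window escapes a single rectangle; the consecutive case and the picture of maximal rectangles make this clear, but the general case warrants a careful argument. Grassmann duality roughly halves the case analysis throughout by letting the NW and SE sides be treated uniformly.
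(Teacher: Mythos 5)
Your overall architecture matches the paper's: reduce to counting pipe dreams via Theorem~\ref{thm:smoothone}, localize all moves to maximal rectangles (the paper's Proposition~\ref{prop:insidemaxrect}), and then characterize the rigid rectangular pipe dreams as exactly those that reduce to SE/NW partitions. The ``reduces $\Rightarrow$ rigid'' direction as you sketch it is essentially fine (modulo a sentence justifying that deleting full cross rows/columns neither creates nor destroys moves).

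The gap is in the converse, which you correctly flag as the main obstacle but then dispose of too quickly. You propose to take an extremal ``bad'' cross $c$ and read off ``a small L-shaped or domino configuration around $c$ of exactly the form on which a chute or ladder move acts.'' This cannot work as a purely local argument: the move witnessing non-rigidity need not be anywhere near $c$. What the failure of the SE/NW property gives you locally is, at best, a cross with elbows both to its north and east, or both to its south and west --- the ``double-Le'' violation of the paper's Proposition~\ref{prop:Le} --- and even extracting that from the failure to reduce is already a nontrivial structural case analysis (the paper's Theorem~\ref{thm:NW/SE}, configurations (a)--(i)). Converting a double-Le violation into an actual move then requires two further ingredients you do not supply: (i) the reducedness fill-in lemma (Lemma~\ref{lem:fillinrect}), which forces large regions between the two pipes' bends to be solid crosses, and (ii) an induction on shrinking rectangles --- assuming rigidity, each Le-violating configuration forces a strictly smaller nested one, and only finiteness eventually produces a genuine cross/near-miss pair, possibly many rows and columns away from $c$. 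A smaller but related point: the validity of a chute/ladder move is itself nonlocal (it requires an entire $2\times m$ window of crosses between the cross and the target elbow), so exhibiting the right picture in a bounded neighborhood of $c$ would not by itself certify a move. Without these steps the hard direction of the theorem is unproven.
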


Thus, if we check all maximal rectangles and, for each one, after removing all entire rows and columns of crosses, see that we are left with a partition in the northwest and one in the southeast, then our positroid variety is smooth at the given point; otherwise, it is singular. In other words, within each maximal rectangle, the only crosses that exist are contained in: (1) entire rows of crosses, (2) entire columns of crosses, (3) a partition in the northwest, (4) a partition in the southeast; in any given maximal rectangle, there can be many possible instances of (1) and (2), but only a single partition (3) and a single partition (4). 
\\

Another method for determining from the notion of top and bottom pipe dreams. From \cite{BB93}, we know that in the matrix Schubert variety case, the set of all pipe dreams for a given permutation has two polar opposites: the top and bottom pipe dreams. In Appendix B, we will show that this is also true in the affine pipe dream case. Thus, a second way to determine whether a pair is smooth is to find the top and bottom pipe dreams and see if they match. 
\\

Finally, although not an algorithmic method like the previous two, we also have a condition for whether a positroid pair $(\Pi_f,\lambda)$ is smooth or singular using atomic positroid pairs, which we defined above; this essentially follows from the definition of the partial order, and is the content of Proposition \ref{prop:atom=sing}:

\begin{Prop*}
A point $\lambda$ being singular on a positroid variety $\Pi_f$  is equivalent to the pipe dream for $\Pi_f$ on $U_\lambda$ being able to reach an atomic configuration via a series of deletions and contractions. 
\end{Prop*}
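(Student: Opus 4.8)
The plan is to prove the proposition as a purely order-theoretic statement and then read it back in pipe-dream language. Recall that a single deletion or contraction of a column is an explicit move on the affine pipe dream for $\Pi_f$ on $U_\lambda$ (Subsection~\ref{subsection:delcontr}), so ``the pipe dream of $\Pi_f$ on $U_\lambda$ can reach an atomic configuration via deletions and contractions'' means exactly ``some atomic positroid pair lies below $(\Pi_f,\lambda)$ in the covering order $\lessdot$.'' The two facts I would feed in are: Proposition~\ref{prop:delsmooth}, which says smoothness at the marked point is inherited downward under $\lessdot$ (equivalently, singularity at the marked point is inherited upward); and the finiteness observed above, namely that $n$ strictly decreases at each $\lessdot$-step while each pair has at most $n$ covers below it, so only finitely many pairs lie below $(\Pi_f,\lambda)$ and every descending chain terminates.

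For the ``if'' direction, suppose there is a chain $(\Pi_f,\lambda)=(\Pi_{f_0},\lambda_0)\gtrdot(\Pi_{f_1},\lambda_1)\gtrdot\cdots\gtrdot(\Pi_{f_m},\lambda_m)$ of deletions and contractions with $(\Pi_{f_m},\lambda_m)$ atomic. By definition an atomic pair is singular at its marked point, so applying the contrapositive of Proposition~\ref{prop:delsmooth} to each covering step and inducting downward on $j$ from $m-1$ to $0$ shows $(\Pi_{f_j},\lambda_j)$ is singular for all $j$; in particular $\lambda$ is singular on $\Pi_f$.

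For the ``only if'' direction, suppose $\lambda$ is singular on $\Pi_f$ and let $S$ be the set of positroid pairs $(\Pi_g,\mu)\le(\Pi_f,\lambda)$ that are singular at $\mu$. Then $S$ is nonempty, since $(\Pi_f,\lambda)\in S$, and finite by the finiteness above, so it has a minimal element $(\Pi_g,\mu)$; because $\le$ is the transitive closure of $\lessdot$, there is a chain of deletions and contractions from $(\Pi_f,\lambda)$ down to $(\Pi_g,\mu)$. I claim $(\Pi_g,\mu)$ is atomic. It is singular by membership in $S$; and for any column $i$, the deletion or contraction of that column produces a pair $(\Pi_{g'},\mu')\lessdot(\Pi_g,\mu)\le(\Pi_f,\lambda)$, so if this pair were singular at $\mu'$ it would belong to $S$ and lie strictly below $(\Pi_g,\mu)$, contradicting minimality; hence every column-deletion or column-contraction of $(\Pi_g,\mu)$ is smooth (or empty), which is precisely atomicity. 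So $(\Pi_g,\mu)$ is the desired atomic configuration reachable from $(\Pi_f,\lambda)$.

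The argument is essentially formal, so the only step I expect to require care is verifying that the minimal elements of the down-set of singular pairs are exactly the atomic pairs — that is, matching the above against the literal definition of ``atomic.'' In particular one should note that the canonical deletion or contraction of a column always keeps the marked point on the resulting variety (so for pairs in $S$, which live in some $Gr(k,n)$ with $n\ge 2$, every column admits such a move and the ``or empty'' alternative in the definition does not actually occur), which is what makes the atomicity condition coincide cleanly with ``no singular pair strictly below.''
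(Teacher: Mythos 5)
Your proposal is correct and follows essentially the same route as the paper: the "if" direction is upward propagation of singularity via Proposition \ref{prop:delsmooth}, and the "only if" direction uses finiteness of the down-set to extract a minimal singular pair, which is then atomic by minimality — the paper phrases this as a terminating descent through a tree of singular children, but it is the same argument. Your closing remark correctly identifies the one point needing care (that for each column exactly one of deletion/contraction retains the marked point, so atomicity coincides with "no singular pair strictly below").
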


We want to classify these atomic positroid pairs. This is given in Theorem \ref{thm:atomicpd}:
\\

\begin{Thm*}

The atomic positroid pairs have affine pipe dreams that look like the following:

 \begin{figure}[htbp]
	\includegraphics[scale=0.6,clip=true]{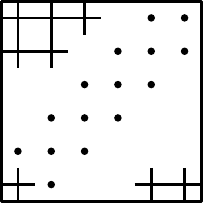}
\end{figure}

Specifically, the atomic positroid pairs:

(1) Have pipe dreams in a square shape: so they are in $Gr(k,2k)$ and on points where $\lambda$ consists of $k$ consecutive columns. 

(2) In this pipe dream, there is a single cross along the southwest to northeast (longest) diagonal, with the diagonals directly adjacent to this longest diagonal being free of any crosses.

(3) Besides this one cross, there are partitions of crosses in the northwest and southeast corners, and no others.

\end{Thm*}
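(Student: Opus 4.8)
## Proof proposal for the classification of atomic positroid pairs

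\textbf{Overall strategy.} The plan is to characterize atomic positroid pairs by combining the Main Theorem with a minimality analysis on the deletion/contraction poset. By the Main Theorem, a pair $(\Pi_f,\lambda)$ is singular precisely when some maximal rectangle in the pipe dream on $U_\lambda$ fails to reduce to SE/NW partitions; by Proposition~\ref{prop:atom=sing} (and Proposition~\ref{prop:delsmooth}), an atomic pair is a singular pair all of whose deletions and contractions are smooth. So I would first translate ``deletion'' and ``contraction'' into operations on affine pipe dream shapes: deleting or contracting a column removes one vertical (resp.\ horizontal) step from the distinguished path, hence shrinks the infinite strip and, crucially, shrinks or merges the maximal rectangles. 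The key structural fact to establish is that each deletion/contraction removes a single row or column from the shape in a controlled way near the chosen column, so I can track how a ``bad'' maximal rectangle (one not reducing to SE/NW partitions) is affected.

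\textbf{Step 1: Reduce to a single bad maximal rectangle.} Since $(\Pi_f,\lambda)$ is atomic it is singular, so some maximal rectangle $R$ is bad. I would argue that an atomic pair cannot have two distinct bad maximal rectangles: if there were two, I could find a column whose deletion or contraction kills one bad rectangle but leaves the other bad (using that maximal rectangles are ``local'' along the strip and a single column operation only affects those rectangles meeting that column), contradicting atomicity. Likewise, all crosses outside $R$ must already be of the harmless types (full rows, full columns, NW partition, SE partition) relative to whichever maximal rectangle they sit in — otherwise that rectangle would be a second bad one. This isolates the problem to a single rectangle $R$ and reduces the whole classification to: which single rectangular pipe dreams are ``atomic'' in the sense that the rectangle is bad but every column operation makes it good (or removes it)?

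\textbf{Step 2: Analyze a single bad rectangle under column operations.} Inside $R$, badness means that after deleting all full rows and full columns of crosses, the residual configuration is not a disjoint union of a NW partition and an SE partition. I would show that minimality under deletion/contraction forces: (a) $R$ has no full row or full column of crosses (else deleting that line reduces $R$ to a strictly smaller bad rectangle, contradicting atomicity — here I need that removing a full line of crosses corresponds to one deletion or contraction and preserves badness); (b) after this, the residual configuration consists of a NW partition, an SE partition, and \emph{exactly one} extra cross, with that extra cross positioned so that removing any row or column through it (or through the partitions) destroys badness. A short case analysis on where the extra cross can sit — it must lie on the main SW–NE diagonal, with the adjacent diagonals empty, since otherwise a single row/column deletion or contraction either absorbs it into a partition or splits the obstruction — gives exactly configuration (2)–(3) of the statement. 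The shape being a square (so $Gr(k,2k)$ with $\lambda$ consisting of $k$ consecutive columns) follows because a non-square maximal rectangle has unequal side lengths, and I can then delete or contract to shrink the longer side without removing the obstruction, again contradicting atomicity; consecutiveness of $\lambda$ then follows from the earlier remark that consecutive $\lambda$ corresponds exactly to the rectangular (here square) shape, together with Step~1 forcing the ambient shape to be the single rectangle $R$.

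\textbf{Step 3: Verify the converse.} Finally I would check that each pipe dream of the asserted form is genuinely atomic: it is singular because the lone diagonal cross together with the two partitions does not reduce to SE/NW partitions (the diagonal cross is the obstruction, and it survives the removal of full rows/columns since there are none), and any single deletion or contraction removes a row or column of the square, which either deletes the diagonal cross or merges it into one of the partitions, or collapses a partition — in every case the result reduces to SE/NW partitions, hence is smooth by the Main Theorem (or becomes empty). I would organize this as a finite check over the possible column choices, using the explicit shape operations from Step~0.

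\textbf{Main obstacle.} The hard part will be Step~2, specifically the bookkeeping that shows a single column deletion/contraction acts on a maximal rectangle by deleting exactly one of its rows or columns (and controlling which one), and that this operation commutes correctly with the ``remove full rows and columns'' reduction in the definition of reducing to SE/NW partitions. Getting the geometry of maximal rectangles under the strip-shrinking operation exactly right — including the boundary cases where an operation merges two adjacent maximal rectangles or shrinks one to nothing — is where the real work lies; once that dictionary is in place, the case analysis pinning down the single diagonal cross is routine.
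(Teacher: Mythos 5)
Your overall strategy (route the classification through the Main Theorem and a minimality analysis on the deletion/contraction poset) is reasonable, but there is a genuine gap at the heart of your Step 2: the claim that the residual obstruction inside the bad rectangle consists of \emph{exactly one} extra cross sitting on the main diagonal is asserted rather than proved, and this is precisely the hard content of the theorem. The paper's proof spends most of its effort exactly here: it shows an atomic pair cannot have two distinct crosses each admitting a move, by an escape-chain argument. If crosses $A$ and $B$ both have moves, atomicity forces every single deletion and contraction to kill both, which pins them into a common column; but then contracting $A$'s row spares $B$ --- unless $B$ can move into $A$'s row, which puts $B$ into a new column whose deletion spares $A$ --- unless $A$ can move into that column, and so on. This chain must continue infinitely to the northeast (and analogously to the southwest), contradicting the boundedness of the strip. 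Your ``short case analysis'' does not contain this idea, and without it you cannot rule out configurations with two or more extra crosses.

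The reason your framework cannot easily supply the missing step is that your ``key structural fact'' --- that a deletion or contraction acts on a fixed pipe dream by removing one row or column of each maximal rectangle --- is not the right model of the operation. By Proposition \ref{prop:delpipedream}, deleting column $i$ means first passing to the pipe dream(s) with the maximal number of crosses in column $i$, i.e.\ performing moves, which can relocate crosses far from column $i$ and out of the rectangle you are focusing on, and only then filling in. So a single column operation is not local to the rectangles meeting that column; this also undermines the separation argument in your Step 1 (killing one of two bad rectangles while preserving the other). The paper's proof avoids this entirely by working directly with movable crosses and tracking where they can migrate under the operations; any repair of your argument will need the same kind of dynamic bookkeeping. (Your Step 3, the converse verification, is sensible but not required by the statement as phrased, and the paper does not carry it out.)
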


Finally, we remark that the techniques in the proof of the Main Theorem can be easily extended to the case of Schubert varieties in flag manifolds at points given by 321-avoiding permutations (Prop \ref{prop:flagscorollary}):

\begin{Prop*}
Let $Fl(n)$ denote the variety of flags in $\C^n$. Let $w\in S_n$ denote a permutation on $n$ elements so that $X_w$ denotes a Schubert variety in this flag manifold, and let $v\in S_n$ be 321-avoiding with $v\geq w$ in Bruhat order. Then $X_w$ is smooth at $v$,  if and only if there exists a pipe dream for $w$ inside $v$'s skew partition (defined above Prop \ref{prop:nonaffineword}) such that all maximal rectangles reduce to NW/SE partitions. (In this case, the pipe dream will be unique.) 
\end{Prop*}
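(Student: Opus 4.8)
The plan is to transport the proof of the Main Theorem~\ref{thm:mainthm} from the affine (positroid) setting to the finite (flag) setting, the bridge being that a $321$-avoiding point behaves exactly like a torus-fixed point on a positroid variety. First recall two standard reductions: $X_w$ is smooth at $v$ if and only if the Kazhdan--Lusztig variety $\mathcal N_{v,w}$ (the intersection of $X_w$ with the opposite big cell translated to $v$) is smooth at its distinguished point; and $v$ is $321$-avoiding precisely when its Rothe diagram $D(v)$ is a skew partition $\lambda/\mu$. In that case $\mathcal N_{v,w}$ embeds in the affine space with one coordinate per cell of $D(v)$, cut out by the rank conditions of $w$, and it admits a flat degeneration (the finite analogue of Snider's \cite{Sni} picture) whose components are indexed by the reduced pipe dreams of $w$ supported in $D(v)$; in particular such a pipe dream exists if and only if $w\le v$, which matches ``$\Pi_f$ meets $\lambda$''. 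Alternatively, as can be extracted from \cite{GraKre}, $\mathcal N_{v,w}$ is isomorphic, up to a factor of affine space, to an open patch $U_{\lambda'}$ of a positroid variety $\Pi_{f'}$ at a torus-fixed point, with $D(v)$ corresponding to a fundamental domain of the affine pipe-dream shape; either route puts us in a position to quote the results already proved.

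Given this dictionary I would rerun the two halves of the argument verbatim. The proof of Theorem~\ref{thm:smoothone} shows a positroid variety is smooth at $\lambda$ exactly when its affine pipe dream is unique; the same tangent-space count --- each pipe dream contributes a coordinate direction, and in the degeneration the defining relations are precisely those forcing distinct pipe dreams to coincide --- shows $\mathcal N_{v,w}$ is smooth at the origin exactly when $w$ has a unique reduced pipe dream inside $D(v)$, giving the parenthetical uniqueness assertion. Then the local analysis behind the Main Theorem --- decompose the ambient shape into maximal rectangles and observe that the only crosses compatible with smoothness are those lying in full rows, full columns, a single NW partition, or a single SE partition of each maximal rectangle --- uses nothing about the shape beyond the fact that it is (locally) an assembly of rectangles meeting at corners, which is exactly what a skew shape is. Hence, via the reductions above, $X_w$ is smooth at $v$ if and only if every maximal rectangle of $D(v)$ reduces to NW/SE partitions for the (then unique) pipe dream of $w$. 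The ``there exists'' form of the statement is equivalent: if some pipe dream of $w$ inside $D(v)$ has all maximal rectangles reducing, the transported Main Theorem forces smoothness, and the transported Theorem~\ref{thm:smoothone} then forces that pipe dream to be the only one.

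The main obstacle is the first paragraph: pinning down the dictionary rigorously, i.e.\ checking that Snider's affine pipe dreams, restricted to the finite situation produced by a $321$-avoiding $v$, genuinely coincide --- not merely resemble --- the classical RC-graph presentation of $\mathcal N_{v,w}$ (cf.\ \cite{BB93}), and that the two notions of ``maximal rectangle'' agree under the identification. This needs care with the several conventions in play (the English/matrix orientation, the NW--SE conventions used here, and the reflection across $x=-y$ from Grassmannian duality), and with the fact that the affine strip is infinite and periodic while $D(v)$ is a bounded skew shape --- so one must argue that only the finitely many maximal rectangles meeting $D(v)$ carry information, the others being forced to be crossless or entirely filled. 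Once that comparison is fixed, verifying that $321$-avoidance is exactly the hypothesis making the maximal-rectangle analysis applicable, and that passing from $X_w$ to $\mathcal N_{v,w}$ loses nothing (standard, via the $B$-action), is routine.
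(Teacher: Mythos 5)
Your overall architecture is close to the paper's: reduce to the Kazhdan--Lusztig patch at $v$, use that $321$-avoidance makes $v$'s diagram/heap a skew partition, identify smoothness with uniqueness of the pipe dream for $w$ in that shape, and then quote the combinatorial half of the Main Theorem (Proposition \ref{prop:insidemaxrect} and Theorem \ref{thm:NW/SE}), which indeed transports verbatim since it only uses the shape being an assembly of rectangles. The paper does not route through a positroid patch at all: it simply observes that parts (2), (3), (4) of equation $(*)$ in Subsection \ref{subsect:smooththm} (Kleiman transversality, the AJS/Billey formula, and the specialization $H_T^*\to H_{\C^\times}^*$) already take place in the flag variety, so only part (1) --- the Grassmannian-specific step --- needs to be discarded. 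Your asserted identification of $\mathcal N_{v,w}$ with a positroid patch ``up to a factor of affine space'' is an unproved (and unnecessary) strengthening.

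The genuine gap is in your mechanism for ``smooth $\iff$ unique pipe dream.'' This is not a ``tangent-space count'' and does not follow from the flat degeneration to the subword complex. The degeneration gives you one direction: if there is a single pipe dream, the initial ideal is generated by distinct variables, so $\mathcal N_{v,w}$ is a transverse complete intersection at the origin and hence smooth. But the converse --- the ``otherwise it is singular'' half, which is the substance of the proposition --- fails for a generic Gröbner degeneration: a smooth variety can degenerate to a reducible union of coordinate subspaces. What the paper actually uses is that the multiplicity of $\mathcal N_{v,w}$ at the origin \emph{equals} the number of pipe dreams, and this equality comes from Rossmann's Theorem \ref{thm:Rossmann}, which requires $\mathcal N_{v,w}$ to be a cone, i.e.\ requires a circle inside $T$ acting on the patch by dilation with all weights $1$. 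That cominuscule condition holds for $X_\circ^v$ in the full flag manifold precisely when $v$ is $321$-avoiding --- this is Proposition \ref{prop:nonaffineword} $(1)\Leftrightarrow(2)$, and it is the real geometric reason the proposition is restricted to $321$-avoiding $v$. Your proposal uses $321$-avoidance only for the combinatorial fact that the heap is a skew partition (Proposition \ref{prop:nonaffineword} $(1)\Leftrightarrow(4)$) and never invokes the dilation condition, so the ``smooth $\Rightarrow$ unique pipe dream'' direction is unsupported as written. To repair it, replace the degeneration heuristic with the equivariant-multiplicity chain: AJS/Billey expresses $[X_w]|_v$ as a sum with one term per pipe dream, each term specializing to $h^{\codim}$ under the dilation circle, and Rossmann identifies the resulting coefficient with the multiplicity.
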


\section{Background Information}

 \subsection{Positroid Varieties}

Our background for the following results on positroid varieties is \cite{KLS}. 

\begin{Def}[Bounded Affine Permutation]
Fix $n\in \mathbb{N}$. A permutation $g:\Z \to \Z$ is called \textbf{affine} if it satisfies the periodicity condition $g(i+n)=g(i)+n$, $\forall i \in \Z$ .

If the affine permutation also satisfies $i\leq g(i)\leq i+n$ for all $i$, it is called \textbf{bounded}. We denote the set of bounded affine permutations of period $n$ and average jump $avg(g(i)-i)=k$ by $Bound(k,n)$. 

Given that an affine permutation repeats with period $n$, it is sufficient to give a permutation by stating its action on any $n$ consecutive numbers. This is called \textbf{window notation}. 
\end{Def}

\begin{Exa}
In window notation, $g=(4,6,3,7,11,8)$ defines a bounded affine permutation of period $6$ sending $g(1)=4, g(2)=3,...,g(6)=8$ and repeating, so $g(7)=g(1)+6=10$, etc.
\end{Exa}

The information contained in a bounded affine permutation $f$ can be given a different, but equivalent, labelling called its \textit{siteswap}, denoted $Site(k,n)$.

\begin{Def}[Siteswap]
The siteswap $f\in Site(k,n)$ corresponding to a bounded affine permutation $g\in Bound(k,n)$ is $f(i):=g(i)-i, 1\leq i \leq n$. Notice that a siteswap $f(i)$ is only defined for $i\in[1,n]$.
\end{Def}

\begin{Exa}
The siteswap corresponded to $g=(4,6,3,7,11,8)$ is  $f=(3,4,0,3,6,2)$.
\end{Exa}

\emph{Remark}: Siteswaps or bounded affine permutations represent (one-handed, bounded) juggling patterns. Namely, the affine permutation can be considered as a permutation taking the time at which a ball is thrown to the time at which that ball lands (so the siteswap gives the number of seconds after a ball is thrown that it lands). 

Let $[i,j] := \{i, i+1,...,j\} \pmod n$, so $\abs{[i,j]}=\begin{cases} j-i+1 & j\geq i \\ (n-i+1)+j & i>j\end{cases}$. 
\\

Define $a_{g,i,j}:=\#\{[i,j]\backslash \{g(i+\mathbb{N}) \} \cap \{\leq j\} \}$. This can be visualized by drawing a permutation matrix for the bounded affine permutation $g$, as in the figures in Appendix A. For say $g(\tilde{i})=\tilde{j}$, the $\tilde{i}$ runs vertically downwards in the figure below, while $\tilde{j}$ runs horizontally to the right. 

We have: $rank_g[i,j]=\abs{[i,j]} - a_{g,i,j} = (j-i+1) - a_{g,i,j}$. 

Now, we can define the positroid variety corresponding to a siteswap $f$ (we apply $\bmod n$ to the definitions using $g$ above):
\\

\begin{Def}[Closed and Open Positroid Varieties]
\[\Pi_f := GL_k \setminus \{M\subseteq M_{k,n} \mid rankM=k \text{ and } rank_M([i,j]) \leq rank_f[i,j], i\leq j \leq i+n\} \]
\[\Pi_f^0 := GL_k \setminus \{M\subseteq M_{k,n} \mid rankM=k \text{ and } rank_M([i,j]) = rank_f[i,j], i\leq j \leq i+n\} \]
where $rank_M([i,j])$ is defined cyclically.
\end{Def}

The open positroid varieties $\Pi_f^0$ are open inside their closures (locally closed), and are all smooth.
\\

 \subsection{Deletion and Contraction}\label{subsection:delcontr}

The Grassmannian is represented by full-rank $k$-by-$n$ matrices (up to $GL(k)$ action on the left): 

\[\{M\}=
\left \{ \begin{blockarray}{cccccc}
 &  &  & n &  &  \\
\begin{block}{c(ccccc)}
& & & & & \\
k & & & * & & \\
& & & & & \\
\end{block}
\end{blockarray} \hspace{0.1cm}
\right \} 
 \]

Then there is a circle action on the right via matrix multiplication by 
{\footnotesize\[S_i=\mqty(\dmat{1,1,\ddots,1,z,1,\ddots,1})\]} 
and this scales the $i$-th column of $M$ by $z$. 
 \\
 
There is a ``Pascal recurrence" on Grassmannians:

\begin{Prop}

Let $S_i$ act on $\{M\}=Gr(k,n)$ on the right. 

Let

\[\Pi_{contr,i} := rowspan \left \{ \mqty(&&&0&&&\\  &*&&0&&*&\\ 0&\cdots&0&1&0&\cdots&0) \right \} \cong Gr(k-1,n-1) \] 

and

\[\Pi_{del,i} := rowspan \left \{ \mqty(&&&0&&&\\  &*&&0&&*&\\ &&&0&&&) \right \} \cong Gr(k,n-1) \]

where the rest of the matrix is full-rank. 

Note that $\Pi_{contr,i}$ corresponds to the siteswap $(k-1)^{n-k}k^{k-1}n$ rotated so that the $n$ is in the $i$th position, and $\Pi_{del,i}$ corresponds to the siteswap $(k)^{n-k-1}(k+1)^{k}0$ rotated so that the $0$ is in the $i$th position.

Then the fixed point set under the $S_i$ action is: 

\[Gr(k,n)^{S_i}= \Pi_{contr,i} \sqcup \Pi_{del,i} \cong Gr(k-1,n-1) \sqcup Gr(k,n-1)\]

and every $T$-fixed point is in one or the other.

\end{Prop}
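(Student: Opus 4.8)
The plan is to reduce the whole proposition to an eigenspace analysis of the operator $S_i$ acting on $\C^n$. First I would identify a point of $Gr(k,n)$ with its row span $V\subseteq\C^n$; right multiplication by $S_i(z)$ then carries $V$ to its image under the linear map $S_i(z)\colon\C^n\to\C^n$ which scales the $i$-th coordinate by $z$ and fixes the rest, so $[M]$ is fixed by the $S_i$-action exactly when $V\cdot S_i(z)=V$. The map $S_i(z)$ is semisimple with eigenspace decomposition $\C^n=\C_i\oplus Proj_i(\C^n)$, with eigenvalue $z$ on $\C_i$ and $1$ on $Proj_i(\C^n)$; as soon as $z\neq 0,1$ these eigenvalues are distinct, and the two spectral projections are polynomials in $S_i(z)$. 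Hence any $V$ with $V\cdot S_i(z)=V$ (for a single such $z$) must split as $V=(V\cap\C_i)\oplus(V\cap Proj_i(\C^n))$, and conversely any $V$ that so splits is fixed by $S_i(z)$ for every $z$. In particular it is harmless that $S_i$ is only a circle: one non-torsion value of $z$ already cuts out the full fixed locus.

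Next I would split according to $\dim(V\cap\C_i)\in\{0,1\}$. If $\C_i\subseteq V$, then $V=\C_i\oplus W$ with $W:=V\cap Proj_i(\C^n)$ an arbitrary $(k-1)$-plane in the $(n-1)$-dimensional space $Proj_i(\C^n)$; this is exactly the row-span description of $\Pi_{contr,i}$, and $W\mapsto\C_i\oplus W$ is an isomorphism $Gr(k-1,n-1)\xrightarrow{\ \sim\ }\Pi_{contr,i}$ with inverse $V\mapsto V\cap Proj_i(\C^n)$. If $\C_i\not\subseteq V$, then $V\cap\C_i=0$ (as $\dim\C_i=1$), so $V=V\cap Proj_i(\C^n)$ is an arbitrary $k$-plane inside $Proj_i(\C^n)$, giving the row-span description of $\Pi_{del,i}\cong Gr(k,n-1)$. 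Every $V$ of either form is visibly $S_i$-fixed, and the two loci are disjoint since $\C_i\not\subseteq Proj_i(\C^n)$; this yields $Gr(k,n)^{S_i}=\Pi_{contr,i}\sqcup\Pi_{del,i}\cong Gr(k-1,n-1)\sqcup Gr(k,n-1)$, and it matches the row-span descriptions of the statement with the subspace descriptions of Subsection~\ref{sub:theresult}. For the final clause, a $T$-fixed point is a coordinate subspace $\langle e_j:j\in\lambda\rangle$, which lies in $\Pi_{contr,i}$ when $i\in\lambda$ and in $\Pi_{del,i}$ when $i\notin\lambda$; equivalently, $S_i$ is a one-parameter subgroup of $T$, so $Gr(k,n)^{T}\subseteq Gr(k,n)^{S_i}$.

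Finally, the parenthetical identification of siteswaps is a routine translation and I would keep it brief. The subvarieties $\Pi_{contr,i},\Pi_{del,i}$ are closed, irreducible and $T$-stable; a generic point of $\Pi_{del,i}$ realizes the matroid $U_{k,n-1}$ with a loop adjoined at $i$, and a generic point of $\Pi_{contr,i}$ realizes $U_{k-1,n-1}$ with a coloop adjoined at $i$. Under the dictionary between cyclic rank functions and bounded affine permutations from \cite{KLS}, a loop at $i$ forces siteswap value $0$ in position $i$, a coloop forces value $n$, and the remaining entries are pushed as large as the bounds permit; this produces the windows $(k)^{n-k-1}(k+1)^{k}0$ and $(k-1)^{n-k}k^{k-1}n$ rotated so that the distinguished value sits in position $i$. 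That the variety is precisely that positroid variety then follows by a standard closed–irreducible–plus–dimension argument (or by checking the binding rank conditions directly). I do not expect a genuine obstacle here: the conceptual content is the elementary invariant-subspace computation above, and the only places demanding a little care are running that argument on the operator $S_i(z)$ rather than on matrices, and the bookkeeping of the matroid-to-window translation together with the cyclic rotation.
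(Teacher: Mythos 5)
The paper states this proposition without proof (it is treated as background, and the siteswap identifications are re-derived later via pipe dreams in Section~\ref{section:affinepipe}), so there is no argument of the paper's to compare against; your job here is to supply one, and what you supply is correct. The eigenspace argument is the standard and right one: $S_i(z)$ is semisimple with eigenspaces $\C_i$ (eigenvalue $z$) and $Proj_i(\C^n)$ (eigenvalue $1$), the spectral projections are polynomials in $S_i(z)$ and hence preserve any invariant subspace, so a fixed $V$ splits as $(V\cap\C_i)\oplus(V\cap Proj_i(\C^n))$, and the dichotomy $\dim(V\cap\C_i)\in\{0,1\}$ gives exactly the two matrix forms in the statement. (One cosmetic remark: you do not need $z$ non-torsion, only $z\neq 1$, since that already separates the two eigenvalues; and conversely a $V$ that splits is fixed by the whole circle, so the fixed locus of one nontrivial element equals that of $S_i$.) The $T$-fixed-point clause is handled correctly via $i\in\lambda$ versus $i\notin\lambda$. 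The only part you leave at sketch level is the siteswap identification; your loop/coloop description of the generic matroid is right, and the windows you name agree (up to the cyclic rotation placing the distinguished value at $i$) with both the proposition and the paper's later pipe-dream verification that $\Pi_{del,i}$ has siteswap $(k+1)^k0k^{n-k-1}$ and $\Pi_{contr,i}$ has siteswap $(k-1)^{n-k}k^{k-1}n$, so writing out the rank-function check would close that remaining gap without difficulty.
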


The process called ``contraction" has two steps. First we first intersect with $\Pi_{contr,i}: \Pi_{contr_i(f)} := Contr_i(\Pi_f)= \Pi_f\cap \Pi_{contr,i}$. Here, we are defining $contr_i(f)$ to be the bounded affine permutation of the positroid variety equal to $\Pi_f\cap \Pi_{contr,i}$; this is possible by Proposition \ref{prop:del2def} below, and the details of how to calculate it can be found in Appendix A. This restricts $\Pi_f$ to the subset that can be written in the form of the matrix representative for $\Pi_{contr,i}$ given in the proposition. Second, we do the projection that removes the last row and $i$-th column to bring it to a subset of $Gr(k-1,n-1)$. This intersection with $\Pi_{contr,i}=Contr_i(Gr(k,n))$ turns out to be the largest positroid variety with $f(i)=i+n$ contained in $\Pi_f$ (the proof is below). As long as a given positroid variety $\Pi_f$ contains at least one point whose $k$-plane uses the $i$-th coordinate, the contraction of this positroid variety will be nonempty. 
\\
 
\emph{Remark}: there is sometimes an abuse of terminology where ``contraction" can also refer to only performing the first step, without the projection to $Gr(k-1,n-1)$. We define contraction as performing both steps, and when only the first step is involved, we call it \textbf{projectionless contraction}. Similarly for \textbf{projectionless deletion} below. 
 \\
 
\emph{Remark}: note that contraction (and similarly deletion, which we describe next) is an operation that is performed on the entire Grassmannian $Gr(k,n)$ (in fact $Gr(k,n)=\Pi_{f'}$ with $f'=kk\cdots kk$), but since we will be working with individual positroid varieties throughout this paper, our notation here emphasizes the effect of contraction (or deletion) on a specific positroid variety $\Pi_f$. 
\\
 
Similarly, the process of ``deletion" has two steps: we first intersect with $\Pi_{del,i}: \Pi_{del_i(f)}:= Del_i(\Pi_f)=\Pi_f\cap \Pi_{del,i}$ where $\Pi_{del,i}$.  Then we do the projection that removes the $i$-th column to bring it to a subset of $Gr(k,n-1)$. This intersection with $\Pi_{del,i}=Del_i(Gr(k,n))$ turns out to be the largest positroid variety with $f(i)=i$ contained in $\Pi_f$. 
\\

There is another description of the deletion and contraction of positroid varieties.
For positroid varieties of $Gr(k,n)$, deletion and contraction of the $i$th column of a positroid variety $\Pi_f$ can be described as:

\begin{Prop}\label{prop:del2def}
The projectionless deletion $Del_i(\Pi_f)$ of the $i$-th column of a positroid variety $\Pi_f$ is equal to $\Pi_{f'}^{d(i)}:=$ the largest positroid variety $\Pi_{f'}$ contained in $\Pi_f$ such that $f'(i)=0$. (By largest, we mean that $\Pi_{f'}$ is not properly contained inside another positroid variety $\Pi_{f''}$ properly contained in $\Pi_f$.) 

The projectionless contraction $Contr_i(\Pi_f)$ of the $i$-th column of a positroid variety $\Pi_f$ is equal to $\Pi_{f'}^{c(i)}:=$ the largest positroid variety $\Pi_{f'}$ contained in $\Pi_f$ such that $f'(i)=n$.
\end{Prop}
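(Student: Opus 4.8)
The plan is to reduce Proposition~\ref{prop:del2def} to two inputs and then assemble it almost formally. The first input is that positroid varieties of $Gr(k,n)$ are closed under intersection, so that $\Pi_f\cap\Pi_{del,i}$ and $\Pi_f\cap\Pi_{contr,i}$ are again positroid varieties; this is part of the basic theory and I would cite \cite{KLS} for it (equivalently: the poset of closed positroid varieties under inclusion is a lattice, so the intersection of two principal order ideals in it is again one). The second input is the dictionary between the extreme siteswap values at a column and concrete geometric conditions: for any positroid variety $\Pi_h$, one has $h(i)=0$ precisely when the $i$-th column vanishes on every point of $\Pi_h$ --- equivalently when $\Pi_h\subseteq\Pi_{del,i}$ --- and $h(i)=n$ precisely when $e_i$ lies in the rowspan of every point of $\Pi_h$ --- equivalently when $\Pi_h\subseteq\Pi_{contr,i}$. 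Both of these are essentially the content of the Pascal-recurrence Proposition together with the rank description of $\Pi_h$, and I would isolate them as a lemma so that the rest is clean.

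Granting these, here is the argument for deletion. By the first input $\Pi_f\cap\Pi_{del,i}$ is a positroid variety $\Pi_{f'}$; since $\Pi_{f'}\subseteq\Pi_{del,i}$, the dictionary gives $f'(i)=0$, so $\Pi_{f'}$ is a candidate for ``the largest positroid subvariety of $\Pi_f$ with siteswap $0$ at column $i$.'' Conversely, if $\Pi_{f''}$ is any positroid variety with $\Pi_{f''}\subseteq\Pi_f$ and $f''(i)=0$, then the dictionary forces $\Pi_{f''}\subseteq\Pi_{del,i}$, hence $\Pi_{f''}\subseteq\Pi_f\cap\Pi_{del,i}=\Pi_{f'}$. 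Thus $\Pi_{f'}$ is at once a candidate and an upper bound for all candidates, so it is their unique maximum; in particular the union of all such $\Pi_{f''}$ is exactly the single positroid variety $\Pi_{f'}$, which is why ``largest'' is well-defined here. (If $\Pi_f$ contains no point whose $k$-plane avoids the $i$-th coordinate, the intersection is empty and there are no candidates, consistent with the stated nonemptiness conventions.) The contraction statement is proved in the same way, with $\Pi_{del,i}$ and the condition $f''(i)=0$ replaced by $\Pi_{contr,i}$ and $f''(i)=n$; alternatively it follows from the deletion case by Grassmannian duality $Gr(k,n)\cong Gr(n-k,n)$, which --- as noted in the introduction --- interchanges deletion with contraction and interchanges the siteswap values $0$ and $n$ at a column.

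It remains to nail down the dictionary. One inclusion is immediate from the rank definition of $\Pi_h$: the conditions ``$i$-th column identically zero'' and ``$e_i$ in the rowspan'' are each cut out by a suitable cyclic rank equality at position $i$, and reading off $rank_h$ from $h$ on the relevant window shows that this equality holds on all of $\Pi_h$ exactly when $h(i)=0$, resp.\ $h(i)=n$. The reverse direction --- that these geometric conditions \emph{force} the siteswap value, not merely bound it --- follows by passing to a generic (hence, by irreducibility of $\Pi_h$, dense) point of $\Pi_h^0$, on which $rank_h$ of any window is the actual rank: if $\Pi_h\subseteq\Pi_{del,i}$ then the generic rank of the $i$-th column is $0$, forcing $h(i)=0$, and symmetrically for $\Pi_{contr,i}$.

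The main obstacle I anticipate is not conceptual but bookkeeping: one must get the siteswap/bounded-affine-permutation conventions to line up with the formula $rank_g[i,j]=|[i,j]|-a_{g,i,j}$ so that the one-window computations above come out exactly as claimed, and in particular so that $\Pi_{del,i}$ and $\Pi_{contr,i}$ really are the positroid varieties with siteswap $0$ and $n$ at column $i$ (which the Pascal-recurrence Proposition already asserts). Once the dictionary is pinned down, the rest is the short formal argument above; the only genuinely external fact is closure of positroid varieties under intersection \cite{KLS}, and the explicit recipe for computing $del_i(f)$ and $contr_i(f)$ from $f$ --- which this proposition does not need --- is handled separately in Appendix~A.
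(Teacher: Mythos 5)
Your ``second input'' (the dictionary identifying $h(i)=0$ with $\Pi_h\subseteq\Pi_{del,i}$ and $h(i)=n$ with $\Pi_h\subseteq\Pi_{contr,i}$) and the formal assembly built on it agree with the paper's proof, and your justification of the dictionary via a one-window rank computation on the dense open stratum is fine. The gap is your ``first input.'' Positroid varieties are \emph{not} closed under intersection, and \cite{KLS} does not say they are; what is true in general is only that an intersection of positroid varieties is a \emph{union} of positroid varieties, which is exactly what the paper warns about at the start of Proposition \ref{prop:delirred}. A concrete counterexample in $Gr(2,4)$: the cyclic Schubert divisors $\{p_{12}=0\}$ and $\{p_{23}=0\}$ are positroid varieties, but their intersection is $\{\mathrm{rank}(\text{cols }1,2,3)\le 1\}\cup\{\text{col }2=0\}$, a union of two incomparable two-dimensional positroid varieties. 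The parenthetical lattice remark does not repair this: even where the poset of positroid varieties is a lattice, the meet is the largest positroid variety contained in both, which need not equal the set-theoretic intersection -- indeed, conflating these two is precisely the content of the proposition you are trying to prove.

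This matters because the irreducibility of $\Pi_f\cap\Pi_{del,i}$ and $\Pi_f\cap\Pi_{contr,i}$ is the one genuinely nontrivial ingredient of the proposition; it is special to these particular intersections and does not follow from general positroid theory. The paper isolates it as Proposition \ref{prop:delirred} and proves it by exhibiting a surjection $\Pi_f\backslash\Pi_{del,i}\twoheadrightarrow\Pi_f\cap\Pi_{contr,i}$ from an irreducible source (with right inverse the inclusion), so that the intersection, being irreducible and a union of positroid strata closures, must be a single positroid variety. Once that is supplied, your direct ``candidate and upper bound'' argument goes through and is arguably cleaner than the paper's argument by contradiction; but as written your proof assumes, with an incorrect citation, the one fact that actually needs proving.
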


Recall that the projectionless contraction was defined as the intersection: $Contr_i(\Pi_f)=\Pi_f \cap \Pi_{contr,i}$, and similarly the projectionless deletion is  $Del_i(\Pi_f)=\Pi_f \cap \Pi_{del,i}$.

Before we prove Prop \ref{prop:del2def}, we prove another proposition that we will need in the proof. 

\begin{Prop}\label{prop:delirred}
The intersections $Contr_i(\Pi_f)=\Pi_f \cap \Pi_{contr,i}$ and $Del_i(\Pi_f)=\Pi_f \cap \Pi_{del,i}$ are irreducible.
\end{Prop}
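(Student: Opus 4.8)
The plan is to exploit the fact that positroid varieties are themselves irreducible (this is part of the standard theory of \cite{KLS}: each $\Pi_f$ is the closure of the irreducible open cell $\Pi_f^0$), together with Proposition \ref{prop:del2def}, which identifies the intersections $\Pi_f\cap\Pi_{contr,i}$ and $\Pi_f\cap\Pi_{del,i}$ with particular positroid varieties $\Pi_{f'}^{c(i)}$ and $\Pi_{f'}^{d(i)}$. The subtlety is that Proposition \ref{prop:del2def} is stated \emph{after} Proposition \ref{prop:delirred} and is said to rely on it, so I must not invoke the full strength of \ref{prop:del2def}; I can, however, use the part of its content that does not depend on irreducibility, namely that the \emph{scheme-theoretic} intersection with $\Pi_{contr,i}$ (resp.\ $\Pi_{del,i}$) is cut out by imposing $f'(i)=n$ (resp.\ $f'(i)=0$), i.e.\ that it is a union of positroid varieties, each of which is already irreducible. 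So the real task is to rule out the intersection being a union of two or more such positroid varieties of the same dimension.

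First I would set up coordinates: using the matrix representative for $\Pi_{contr,i}$ from the Pascal-recurrence Proposition, write a generic element of $Gr(k,n)$ whose $k$-plane uses the $i$-th coordinate in row-echelon form with the pivot in column $i$ in the last row. The condition of lying in $\Pi_{contr,i}$ is exactly that all other entries of that last row vanish; deleting that row and the $i$-th column gives an isomorphism $\Pi_{contr,i}\xrightarrow{\sim}Gr(k-1,n-1)$. Under this isomorphism I would track the rank conditions $rank_M([a,b])\le rank_f[a,b]$ defining $\Pi_f$: each such condition either becomes a corresponding cyclic rank condition on the smaller matrix or becomes vacuous, so the image of $\Pi_f\cap\Pi_{contr,i}$ is again defined by cyclic rank inequalities, hence is a (closed) positroid variety $\Pi_{contr_i(f)}$ inside $Gr(k-1,n-1)$. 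The same analysis, with the even simpler matrix representative for $\Pi_{del,i}$ (last row identically zero after the $i$-th column is zeroed out), gives the deletion case.

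Having reduced to showing a single closed positroid variety — rather than a reducible union — appears, I would argue that $\Pi_f\cap\Pi_{contr,i}$ is the closure of its intersection with the open positroid cell $\Pi_g^0$, where $g=contr_i(f)$, and that this open intersection is nonempty and irreducible because it is itself an open positroid cell (a torus orbit closure intersected with the big cell), hence irreducible; density of the open cell in the closed positroid variety then forces irreducibility of the whole. Concretely: $\Pi_{contr,i}$ is itself a positroid variety, so $\Pi_f\cap\Pi_{contr,i}$ is an intersection of two positroid varieties, and by the theory of \cite{KLS} the intersection of the closed positroid variety $\Pi_f$ with the closed positroid variety $\Pi_{contr,i}$, when it meets the open cell of the smaller one, is the closed positroid variety indexed by the meet (or the appropriate combinatorial operation) of the two bounded affine permutations, and is therefore irreducible. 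The main obstacle I anticipate is the bookkeeping needed to show the scheme-theoretic intersection is \emph{reduced} and equals exactly this closed positroid variety with no embedded or extra top-dimensional components; handling this cleanly may require either a transversality/dimension-count argument (showing $\codim(\Pi_f\cap\Pi_{contr,i})=\codim\Pi_f+\codim\Pi_{contr,i}$ when nonempty, so the intersection is proper and, positroid varieties being Cohen--Macaulay and even normal with rational singularities by \cite{KLS}, the proper intersection is again a positroid variety) or a direct appeal to the known fact that projections and intersections with the coordinate subGrassmannians preserve the class of positroid varieties. I would present the dimension-count route as the primary argument, deferring the finer scheme structure to the cited results on normality of positroid varieties.
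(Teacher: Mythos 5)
Your proposal does not close the gap it correctly identifies. You are right that the stratification only tells you the intersection is a union of closed positroid varieties, so the whole content of the proposition is ruling out two or more top-dimensional components. But the route you designate as primary --- codimension additivity plus Cohen--Macaulayness --- fails on both counts. First, the intersection is \emph{not} dimensionally proper in general: take $\Pi_f=\Pi_{del,j}$ with $j\neq i$. Then $\Pi_f\cap\Pi_{contr,i}=\{V: V\subseteq H_j,\ e_i\in V\}\cong Gr(k-1,n-2)$ has codimension $n-1$ in $Gr(k,n)$, whereas $\codim\Pi_{del,j}+\codim\Pi_{contr,i}=k+(n-k)=n$. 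Second, even where the intersection is proper, a proper intersection of irreducible Cohen--Macaulay varieties can perfectly well be reducible (two quadric surfaces in $\PP^3$ meeting in a pair of conics), so purity of dimension and absence of embedded components would not give you irreducibility anyway. Your remaining suggestions are either incomplete or circular: the claim that ``defined by cyclic rank inequalities, hence a (closed) positroid variety'' contradicts your own opening observation that such loci are in general reducible unions of positroid varieties (to make it work you would need the matroid-level fact that contractions of positroids are positroids and a verification that the variety-level intersection is cut out by exactly that positroid's rank function, which is essentially the statement being proved); and ``the closure of its intersection with the open cell $\Pi_g^0$'' presupposes that there is a unique maximal open stratum, which is again the point at issue.

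For contrast, the paper's argument avoids all dimension counting: it exhibits $\Pi_f\cap\Pi_{contr,i}$ as the image of the irreducible open subset $\Pi_f\setminus\Pi_{del,i}$ under the surjection $V\mapsto (V\cap H_i)\oplus\C e_i$ (the $z\to\infty$ limit of the $S_i$-action, with the inclusion of $\Pi_f\cap\Pi_{contr,i}$ as a right inverse witnessing surjectivity), and then uses that open subsets and images of irreducible varieties are irreducible; the deletion case follows by Grassmannian duality. If you want to salvage your approach, the viable branch is the rank-condition bookkeeping combined with the cited combinatorial fact that deletion/contraction of a positroid is a positroid, not the transversality argument.
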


\begin{proof}
Because the positroid varieties form a stratification, intersections of positroid varieties are in general unions of positroid varieties, but we show now that in this case of $Contr_i(\Pi_f)=\Pi_f \cap \Pi_{contr,i}$, this intersection is irreducible (when it's nonempty), which means that $Contr_i(\Pi_f)$ cannot split into multiple positroid varieties - it must be a single positroid variety. The intersection $\Pi_f \cap \Pi_{contr,i}$ is irreducible because we have 
\[\Pi_f \hookleftarrow \Pi_f \backslash \Pi_{del,i} \twoheadrightarrow \Pi_f \cap \Pi_{contr,i}\] 
The latter map is a surjection because we have a right inverse given by taking a plane (point) in $\Pi_f \cap \Pi_{contr,i}$ and taking its direct sum $\oplus \C$, a copy of $\C$ lying along the $i$th coordinate. 
Recall that irreducibility means the coordinate ring has no zerodivisors. Positroid varieties are irreducible, so $\Pi_f$ is irreducible. If we look at $\Pi_f \backslash \Pi_{del,i}$, ripping out a subvariety enlargens the coordinate ring by introducing denominators (localization), not zerodivisors; therefore, $\Pi_f \backslash \Pi_{del,i}$ is irreducible. Finally, if we consider $\Pi_f \backslash \Pi_{del,i} \twoheadrightarrow \Pi_f \cap \Pi_{contr,i}$, we use the fact that the image of an irreducible variety is an irreducible variety to conclude irreducibility of $\Pi_f \cap \Pi_{contr,i}$ (here, the image is everything due to surjectivity, so it is irreducible). This fact is true simply because the map of a variety onto its image corresponds to an inclusion of the coordinate rings in the contravariant direction, and a subring of a ring with no zerodivisors does not have zerodivisors either.  

The analogous statement for $\Pi_f \hookleftarrow \Pi_f \backslash \Pi_{contr,i} \twoheadrightarrow \Pi_f \cap \Pi_{del,i}$ follows from Grassmannian duality and proves the irreducibility of $\Pi_f \cap \Pi_{del,i}$.

\end{proof}

\begin{proof} (of Prop \ref{prop:del2def})

We make use of the fact that $f'(i)=0$ for a positroid variety $\Pi_{f'}$ is equivalent to $\Pi_{f'}$ being contained in $\Pi_{del,i}$; similarly, $f'(i)=n$ for a positroid variety $\Pi_{f'}$ is equivalent to $\Pi_{f'}$ being contained in $\Pi_{contr,i}$. 

$(\supseteq)$ We first show that $Del_i(\Pi_f) \supseteq \Pi_{f'}^{d(i)}$, that is, $\Pi_f \cap \Pi_{del,i} \supseteq \Pi_{f'}^{d(i)}$. This is true because (a) $\Pi_{f'}^{d(i)} \subseteq \Pi_f$ by definition (as the largest positroid variety \textit{contained within} $\Pi_f$) , and similarly $\Pi_{f'}^{d(i)} \subseteq \Pi_{del,i}$ follows from its definition as having $f'(i)=0$, in accordance with the fact that we just stated. The case of contraction is the same.

$(\subseteq)$ Finally we show that $Del_i(\Pi_f) \subseteq \Pi_{f'}^{d(i)}$, that is, $\Pi_f \cap \Pi_{del,i} \subseteq \Pi_{f'}^{d(i)}$. 
For contradiction, assume this is not true; that is, $\Pi_{f'}^{d(i)}$ is properly contained within $\Pi_f \cap \Pi_{del,i}$: $\Pi_{f'}^{d(i)} \varsubsetneqq \Pi_f \cap \Pi_{del,i}$. We will show the contradiction by making use of the irreducibility noted in Prop \ref{prop:delirred} (namely, that while an intersection of positroid varieties like $\Pi_f \cap \Pi_{del,i}$ can in general split into a union of positroid varieties: $ \Pi_{f_1} \cap \Pi_{f_2} = \Pi_{f_3} \cup \Pi_{f_4} \cup \cdots \cup \Pi_{f_j}$, in this special case of $\Pi_f \cap \Pi_{del,i}$, the irreducibility implies that it is a single positroid variety: $\Pi_f \cap \Pi_{del,i} = \Pi_g$). In general, $\Pi_g \varsubsetneqq \Pi_f$, so we would have $\Pi_{f'}^{d(i)} \varsubsetneqq \Pi_g \varsubsetneqq \Pi_f$, and this contradicts the defintion of $\Pi_{f'}^{d(i)}$ as the largest positroid variety contained in $\Pi_f$ such that $f'(i)=0$ (since $\Pi_g = \Pi_f \cap \Pi_{del,i}$ also satisfies $f'(i)=0$). Of course, in the special case when $\Pi_f \subseteq \Pi_{del,i}$, then $\Pi_f \cap \Pi_{del,i} = \Pi_f$, and the largest positroid variety $\Pi_{f'}$ contained within $\Pi_f$ satsifying $f'(i)=0$ is just $\Pi_f$ itself, so $\Pi_{f'}^{d(i)} = \Pi_f = Del_i(\Pi_f)$ and so everything holds automatically.

The proof for the contraction case is analogous.
\end{proof}

\section{Affine Pipe Dreams}\label{section:affinepipe}

\begin{Def}[Pipe Dreams for Matrix Schubert Varieties]
A pipe dream of size $n$ is a diagram in an $n$-by-$n$ square where each box can be filled by one of two types of tiles, elbows \- and crosses \+, such that all crosses occur above the antidiagonal. Thus, we can think of the grid as a set of pipes that begin on the north and east edges and end on the west and south edges (with the south to east tiles being all elbows).

We called the pipe dream \textbf{reduced} if no two of the pipes cross twice. We will focus only on the reduced case in the following, so the adjective ``reduced" should always be assumed. 
\end{Def}

\begin{Exa}
The following figure shows an example for all pipe dreams corresponding to the permutation $\pi=2143$ in $S_4$. The pipes running from the south edge to the east edge are uninteresting so are not shown. 
\\

\end{Exa}

\begin{figure}[htbp]\includegraphics[scale=0.5,clip=true]{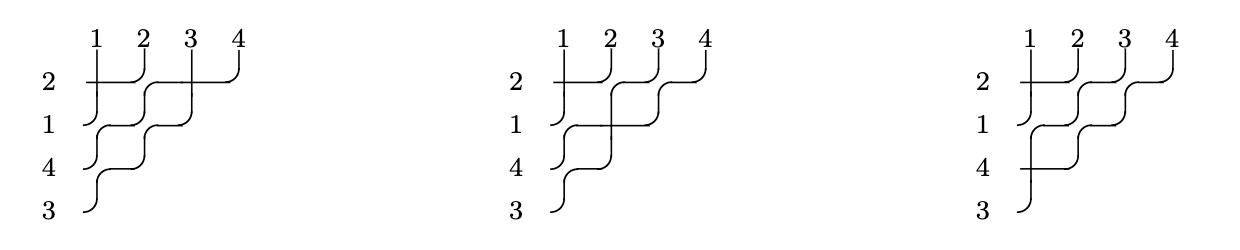}\end{figure}

\begin{Def}\label{def:moves}
A \textbf{cross-elbow move} (in the following: ``\textbf{move}," for short) in a pipe dream is an interchange of a cross tile and an elbow tile that preserves the connectivity of the pipes - in other words, that leaves the permutation invariant. We think of the cross as ``moving" to the position where the elbow it is replacing was located. If we perform all possible moves in a pipe dream, and keep repeating, taking all the new pipe dreams obtained after performing moves and doing moves on these new ones, then we will obtain the set of all possible pipe dreams for that particular permutation (see \cite{BB93}). If the pipe dream has no possible moves, so is the unique pipe dream for a particular permutation, then we call the pipe dream \textbf{rigid}. The notion of moves carries over without change to the case of bounded affine permutations which we define next (the chute and ladder moves of \cite{BB93} are a subset of our moves), and will be used extensively below. 
\end{Def}

\begin{Exa}
In the previous figure, for the pipe dream on the very left, we see that the cross in the northwest corner has no moves, but the other cross (to the right) has 2 different places it could potentially move, which does not change the connectivity/permutation. Therefore, this gives us the 3 different pipe dreams. 
\end{Exa}

There is a set of \textbf{affine pipe dreams} associated to a pair $(f, \lambda)$ of a bounded affine permutation $f$ and a set of $k$ columns $\lambda \in {[n] \choose k}$, which diagrammatically display the intersection $\Pi_f \cap U_\lambda$. The shape or outer boundary of the affine pipe dream depends only on $\lambda$; it is constructed by picking a \textbf{distinguished path} corresponding to $\lambda$ and using it to cut a matrix into two pieces, which are then shifted northeast and southwest infinitely. The details are in Chapter 4 of \cite{Sni}, as well as in this section and the Appendix.

\begin{Exa}
In this example, the vertical lines in the picture on the left occur right next to the $1$'s where the columns in $\lambda$ are: these are columns 1, 2 and 4. On the right, after ``collapsing" these columns, they are now depicted by the vertical segments in the distinguished path (the staircase line) running NW to SE. 

\begin{figure}[htbp]\includegraphics[scale=0.7,clip=true]{PathCollapse1.pdf}\end{figure}

\end{Exa}

The affine pipe dream's shape is an infinite strip obtained by taking this staircase line and placing an exact copy directly above it and another copy to the right. This continues forever in the NW and SE directions as depicted in the figure below (note that later in this section we will explain the numbers within the pipe dream shape). Affine pipe dreams are diagrams drawn by filling in the squares inside the shape with cross and elbow tiles. 

\begin{figure}[htbp]
\includegraphics[scale=0.7,clip=true]{InfiniteStrip1.pdf}
\end{figure}

The main theorem proven in \cite{Sni} is that the restriction of a positroid variety to a point is stratified isomorphic to affine Kazhdan-Lusztig varieties. In the next section, we will use this to show that this implies that the equivariant cohomology class of a positroid variety restricted to a point is therefore given by a sum over these affine pipe dreams (for the shape of the point being restricted to). We will then show that by restricting our torus action to a circle acting by dilation, we obtain a corollary that the multiplicity of that point on the positroid variety is equal to the number of pipe dreams for that positroid variety in that shape. The details are below. In any case, because \cite{Sni} proves this correspondence using these pipe dreams shapes, and since this correspondence is all we need from these shapes to prove the theorem on the relationship between smoothness and pipe dreams in the next section, we can use them for the results we prove below. However, it is not difficult to prove that any periodic pipe dream shape, which when filled in entirely with crosses produces the permutation of a point, must be of the form above; the proof is included in a companion thesis \cite{MyThesis}. 

The numbers within the pipe dream shape come from thinking of the pipe dream as a rotated wiring diagram, and subsets of these numbers form words in a Coxeter group; for example, the numbers along the northeast diagonal coming out from the ``$1$" spot along the lower boundary are also all labeled ``$1$" since any cross placed along this diagonal should produce an $s_1$ in the word - that is, it should transpose the first and second elements, or equivalently in the picture, switch the pipes emerging from the first and second positions. 
\\

Now we show that for these affine pipe dreams, it is still the case that if a particular affine pipe dream is rigid (it has no possible moves), then it is the only one that produces its corresponding bounded affine permutation.

\begin{Prop}\label{prop:BAPmoves}
For any affine pipe dream $\Omega$ with bounded affine permutation $g$, every other affine pipe dream $\Omega'$ that also has bounded affine permutation $g$ can be reached via $\Omega$ by performing a sequence of moves on $\Omega$.
\end{Prop}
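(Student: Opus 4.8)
The plan is to reduce Proposition~\ref{prop:BAPmoves} to the known finite (matrix Schubert) statement, namely the theorem of Bergeron--Billey that the set of reduced pipe dreams of a permutation in $S_N$ is connected under chute and ladder moves \cite{BB93} --- which, by Definition~\ref{def:moves}, are among our moves. The first and most delicate step is a \emph{confinement} lemma: because $g$ is a \emph{bounded} affine permutation, the intersection $\Pi_g\cap U_\lambda$ is a finite--codimension affine Kazhdan--Lusztig variety (via \cite{Sni}), cut out by finitely many equations, so every affine pipe dream for $g$ on $U_\lambda$ has finitely many crosses. More is true: there is a bounded region $R$ of the infinite strip, depending only on $(g,\lambda)$ and not on the particular pipe dream, outside of which every affine pipe dream for $g$ consists entirely of elbows. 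The point is that a cross lying on a diagonal labeled $s_i$ far out in the NE (or SW) direction contributes a simple reflection to the word read off from the pipe dream that cannot be cancelled --- using the $n$-periodicity of the diagonal labels together with reducedness and the bound $i\le g(i)\le i+n$ --- so it would force the word to be non-reduced or to represent the wrong bounded affine permutation; making this precise produces $R$.

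Granting the confinement lemma, I would then truncate to a finite window. Fix a large finite sub-staircase-strip $W$ of the infinite strip that contains $R$ well in its interior and is large enough that the pipes entering and leaving $W$ can be straightened. Inside $W$ an affine pipe dream for $g$ restricts to an ordinary reduced pipe dream, and reading off the boundary data of $W$ via the diagonal labels yields a finite permutation $\tilde g$ that is \emph{independent} of which affine pipe dream we began with, since $\partial W$ sits in the all-elbows region. This gives a bijection between affine pipe dreams for $g$ on $U_\lambda$ and ordinary reduced pipe dreams for $\tilde g$ supported in $W$, and a move on one side corresponds to a move on the other. (If affine pipe dreams are required to be periodic, one instead performs each $W$-move together with all of its translates; since $W$ contains a full fundamental domain with buffer, this is well-defined and stays inside the class of affine pipe dreams.) Since all reduced pipe dreams for $\tilde g$ already live inside an $N\times N$ triangle, enlarging $W$ if necessary guarantees that every intermediate pipe dream in any connecting sequence also lies in $W$. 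Applying \cite{BB93} connects any two reduced pipe dreams for $\tilde g$ by chute and ladder moves, and transporting this sequence back through the bijection connects $\Omega$ and $\Omega'$ by moves; in particular a rigid affine pipe dream is the unique representative of its bounded affine permutation.

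The main obstacle is the confinement lemma of the first paragraph: it is exactly the step that genuinely uses the boundedness hypothesis on $g$ and the particular periodic shape determined by $\lambda$, and it is what makes the otherwise routine reduction to \cite{BB93} legitimate (without a uniform $R$ one cannot choose a single finite window $W$ that sees all affine pipe dreams at once). A secondary technical point is pinning down the label/boundary-straightening conventions in the truncation step carefully enough that the correspondence between affine and ordinary pipe dreams --- and between their moves --- is a genuine bijection rather than merely a surjection; here one can lean on the explicit description of affine pipe dream shapes recalled in Section~\ref{section:affinepipe}.
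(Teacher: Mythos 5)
Your route is genuinely different from the paper's, and it has a real gap. The paper's proof is two lines: by Snider's result (Proposition~\ref{prop:subwdcomp}) the affine pipe dreams for $f$ on $U_\lambda$ are exactly the facets of a subword complex $\Delta(Q_\lambda,f)$, and by Knutson--Miller (Theorem~\ref{thm:ballsphere}) --- which holds for arbitrary Coxeter groups, including the affine symmetric group --- subword complexes are balls or spheres, hence connected in codimension one; moves are precisely the codimension-one adjacencies in the dual graph. No confinement argument is needed because the word $Q_\lambda$ is already finite: the strip is periodic, a fundamental domain has only $k(n-k)$ tiles, and an affine pipe dream is determined by its restriction there, so the set of possible cross positions is finite from the outset.

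The gap in your proposal is twofold. First, the confinement lemma is false as stated for the objects at hand: affine pipe dreams are periodic fillings of the periodic strip, so their crosses recur in every translate of the fundamental domain and are not supported in any bounded region $R$; per fundamental domain, on the other hand, everything is trivially finite, so the lemma is also unnecessary. Second, and more seriously, the truncation step does not actually land you in the setting of \cite{BB93}. The diagonal labels are periodic mod $n$ and include the affine simple reflection, so the word read off a pipe dream is a reduced word in the affine symmetric group $\widetilde{S}_n$, not in a finite $S_N$; there is no canonical finite permutation $\tilde g$ whose ordinary staircase pipe dreams biject with the periodic ones, and a move can straddle the boundary of any chosen window (with its translate recurring at every period), so enlarging $W$ cannot guarantee that an entire connecting sequence is visible inside one window without wrapping. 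What you would need is a connectivity statement for reduced pipe dreams of affine permutations --- and that is exactly what the subword-complex theorem of Knutson--Miller, invoked by the paper, supplies for arbitrary Coxeter groups.
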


Before we give the proof, we state the following definition of a word, taken from \cite{KM03}, and a couple of facts to be used in the proof.

\begin{Def}[\cite{KM03}]
Let $\Pi$ denote the set of permutations on $n$ elements, and let $\Sigma$ denote the set of transpositions $s_1,...,s_n$ where $s_i$ switches $i$ and $i+1$. A word of size $m$ is an ordered sequence  $Q=(\sigma_1,...,\sigma_m)$ of elements of $\Sigma$. An ordered subsequence $P$ of $Q$ is called a \textbf{subword} of $Q$. 

 $P$ \textbf{represents} $\pi\in \Pi$ if the ordered product of the simple reflections in $P$ is a reduced decomposition for $\pi$.

$P$\textbf{contains} $\pi\in\Pi$ if some subsequence of $P$ represents $\pi$.

The subword complex $\Delta(Q,\pi)$ is the set of subwords $Q\backslash P$ whose complements $P$ contain $\pi$.
\end{Def}

In Appendix B, we do an example to illustrate how a subword $P_{f,\lambda}$ for the siteswap $f$ at the point $\lambda$ is obtained from the word $Q_\lambda$ of the affine patch around the point $\lambda$. Here, we just state the following facts. 

\begin{Prop}[Corollary 1 in \cite{Sni}]\label{prop:subwdcomp} 
Define the \textbf{affine pipe dream complex} $\Delta(\lambda,\pi)$ as is the simplicial complex with vertices labeled by entries $(i, j)$ in the periodic strip and faces labeled by the elbow sets in the affine pipe dreams for $\pi$ with shape defined by $\lambda$. It is isomorphic to the subword complex $\Delta(Q_\lambda,f)$.
\end{Prop}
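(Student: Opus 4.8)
The plan is to produce the isomorphism explicitly, by identifying on the nose the vertices and the faces of the two complexes, and to isolate the one substantive input. On the geometric side, Snider's main theorem (recalled in \S\ref{section:affinepipe}) identifies $\Pi_f\cap U_\lambda$, as a stratified scheme, with an affine Kazhdan--Lusztig variety $\mathcal N$ cut out of a Bott--Samelson chart of the affine flag variety, the chart being parametrized by a word $Q_\lambda$ in the affine symmetric group $\widetilde{S}_n$. The first --- and only nontrivial --- step is to check that this word $Q_\lambda$ is exactly the word one reads off the periodic strip: its letters are the boxes $(i,j)$ of a fundamental domain, read in the standard reading order, each recorded as the simple reflection that is its diagonal label. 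Concretely one sets up the dictionary in which (a) the free matrix entries of $U_\lambda$ correspond to the boxes of the strip; (b) the diagonal labels correspond to the generators of $\widetilde{S}_n$; and (c) after the Bott--Samelson change of coordinates, the positroid conditions $\operatorname{rank}_M[a,b]\le\operatorname{rank}_f[a,b]$ become precisely the equations defining $\mathcal N$ inside the chart. This is the content of \cite[Cor.~1]{Sni}; for the present paper we take it as given, and a self-contained account would reproduce this dictionary rather than the geometry behind it.

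Granting the dictionary, the remainder is bookkeeping with words. An affine pipe dream of shape $\lambda$ is, by definition, a filling of the strip by crosses and elbows (subject to the periodicity), and reading its crosses along the fixed reading order sets up a bijection between such fillings and subwords $P\subseteq Q_\lambda$, with the elbow set being the complement $Q_\lambda\setminus P$. Under this bijection the permutation of the pipe dream --- obtained by smoothing double crossings, i.e.\ the $0$-Hecke (Demazure) product along the wiring --- is the Demazure product of $P$, so a reduced affine pipe dream for $\pi$ corresponds to a subword that is a reduced word for $\pi$, equivalently, under the identification of bounded affine permutations with elements of $\widetilde{S}_n$, for the siteswap $f$. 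By \cite{KM03}, whose arguments are purely Coxeter-combinatorial and hence valid verbatim in $\widetilde{S}_n$, the subword complex $\Delta(Q_\lambda,f)$ has faces $\{\,Q_\lambda\setminus P : P \text{ contains } f\,\}$ and is the down-closure of the complements of the reduced words for $f$ inside $Q_\lambda$, i.e.\ of the elbow sets of the reduced affine pipe dreams for $\pi$. Since shrinking an elbow set of a reduced pipe dream means enlarging its cross subword --- which then still contains $f$ --- the collection of elbow sets of affine pipe dreams for $\pi$ is already closed under passing to subsets and equals this down-closure. Thus $\Delta(\lambda,\pi)$ and $\Delta(Q_\lambda,f)$ have literally the same faces.

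Finally, their vertex sets agree under the same dictionary: vertices of $\Delta(\lambda,\pi)$ are the boxes $(i,j)$ of the periodic strip and vertices of $\Delta(Q_\lambda,f)$ are the letters of $Q_\lambda$, matched position by position; the position-preserving bijection on vertices therefore carries faces to faces in both directions and is the desired simplicial isomorphism. The hard part of the whole argument is the very first step --- that the word read off the strip is the Bott--Samelson word attached to $U_\lambda$, equivalently that the rank conditions defining $\Pi_f$ turn into the subword conditions after the coordinate change --- and that is precisely where \cite{Sni} does its work; everything downstream is a translation between fillings of a strip, subwords, and their Demazure products.
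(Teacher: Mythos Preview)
The paper does not supply its own proof of this proposition: it is quoted verbatim as Corollary~1 of \cite{Sni} and used as a black box (together with Theorem~\ref{thm:ballsphere}) to deduce Proposition~\ref{prop:BAPmoves}. Your proposal is therefore not competing with any argument in the paper; rather, you have sketched what Snider's proof amounts to. That sketch is accurate: the substantive step is indeed the identification of the word $Q_\lambda$ read off the periodic strip with the Bott--Samelson word for the patch $U_\lambda$ (which is exactly what \cite{Sni} establishes and what the present paper recapitulates in Appendix~B), and after that the matching of faces is the tautology you describe --- crosses in a filling are letters of a subword, elbow sets are complements, and the facets of $\Delta(Q_\lambda,f)$ are the reduced subwords for $f$. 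One small point worth making explicit: since the paper declares that ``reduced'' is always assumed, the elbow sets of affine pipe dreams for $\pi$ are a priori only the \emph{facets} of the complex, and the simplicial complex $\Delta(\lambda,\pi)$ is the one they generate; your down-closure remark is exactly what justifies calling it a simplicial complex and matching it with $\Delta(Q_\lambda,f)$ face by face.
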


\begin{Thm}[KM04, Theorem 3.7]\label{thm:ballsphere} The topological realization of any subword complex $\Delta(Q,\pi)$ homeomorphic to a ball or sphere; in particular, every ridge (codimension 1 facet) is contained in one or two facets. The dual graph (whose vertices are the facets of $\Delta(Q,\pi)$ and whose edges are the ridges of $\Delta(Q,\pi)$) is connected.
\end{Thm}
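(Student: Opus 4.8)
The plan is to prove the statement by induction on the length $m$ of the word $Q$, via the route that $\Delta(Q,\pi)$ is \emph{vertex-decomposable} (hence shellable), together with the criterion of Danaraj--Klee that a shellable pure complex with connected dual graph in which every codimension-one face lies in at most two facets is homeomorphic to a ball or a sphere --- a ball if some ridge lies in one facet, a sphere if every ridge lies in two. The combinatorial engine is a local recursion at the last letter. Write $Q=(Q',\sigma)$ with $|Q'|=m-1$. Using that a word $(R,\sigma)$ contains $\pi$ if and only if $R$ contains $\pi$ (when $\ell(\pi\sigma)>\ell(\pi)$) or $R$ contains $\pi\sigma$ (when $\ell(\pi\sigma)<\ell(\pi)$), one checks directly that the vertex $v=\{m\}$ satisfies $\operatorname{link}_v\Delta(Q,\pi)=\Delta(Q',\pi)$, while $\operatorname{del}_v\Delta(Q,\pi)=\Delta(Q',\pi)$ if $\ell(\pi\sigma)>\ell(\pi)$ and $\operatorname{del}_v\Delta(Q,\pi)=\Delta(Q',\pi\sigma)$ if $\ell(\pi\sigma)<\ell(\pi)$. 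In the first case $\Delta(Q,\pi)$ is the cone $v*\Delta(Q',\pi)$; in the second, $\Delta(Q',\pi)$ is a subcomplex of $\Delta(Q',\pi\sigma)$ (by the subword property of Bruhat order, since $\pi\sigma<\pi$) and $\Delta(Q,\pi)=\Delta(Q',\pi\sigma)\cup\bigl(v*\Delta(Q',\pi)\bigr)$, glued along $\Delta(Q',\pi)$.

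I would first record that subword complexes are pure: a facet of $\Delta(Q,\pi)$ is a subset $I\subseteq[m]$ with $Q_{[m]\setminus I}$ a reduced word for $\pi$, so $|I|=m-\ell(\pi)$ and $\Delta(Q,\pi)$ is pure of dimension $m-\ell(\pi)-1$ (likewise for the complexes appearing in the recursion). Vertex-decomposability then follows by induction: in the cone case $\Delta(Q,\pi)$ is a cone over a vertex-decomposable complex (or over the void complex), hence vertex-decomposable; in the glued case, $\operatorname{del}_v=\Delta(Q',\pi\sigma)$ and $\operatorname{link}_v=\Delta(Q',\pi)$ are vertex-decomposable by hypothesis, and the Provan--Billera shedding condition --- that no facet of $\operatorname{del}_v$ is a face of $\operatorname{link}_v$ --- holds on dimensional grounds, because facets of $\Delta(Q',\pi\sigma)$ have $(m-1)-\ell(\pi\sigma)=m-\ell(\pi)$ vertices whereas every face of $\Delta(Q',\pi)$ has at most $(m-1)-\ell(\pi)=m-\ell(\pi)-1$ vertices. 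Hence $v$ is a shedding vertex and $\Delta(Q,\pi)$ is vertex-decomposable, so it is shellable, and (being pure and shellable) has connected dual graph.

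Next I would establish that every ridge $R$ of $\Delta(Q,\pi)$ lies in one or two facets. Since $\operatorname{link}_R\Delta(Q,\pi)=\Delta(Q_{[m]\setminus R},\pi)$ and $|R|=m-\ell(\pi)-1$, the word $Q_{[m]\setminus R}$ has length $\ell(\pi)+1$ and this link is $0$-dimensional and nonempty; provided $\ell(\pi)+1<m$, the inductive hypothesis says it is a ball or a sphere, hence one or two points, so $R$ lies in one or two facets. The remaining case $m=\ell(\pi)+1$ (where $R=\emptyset$) is treated directly via Demazure products: if the Demazure product $\delta$ of $Q$ strictly exceeds $\pi$ then $Q$ is reduced for $\delta$, $\pi$ is a cocover of $\delta$, and identifying the element represented by $Q_{[m]\setminus\{p\}}$ with $r_p\delta$ for the $p$-th inversion reflection $r_p$ of $\delta$ shows $p\mapsto r_p\delta$ is injective, so exactly one $p$ yields $\pi$ and $\Delta(Q,\pi)$ is a single point; if instead the Demazure product of $Q$ equals $\pi$ (one length collapse), a short argument shows exactly two letters may be removed, giving two points. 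With the ridge condition and connected dual graph in hand, Danaraj--Klee yields that $\Delta(Q,\pi)$ is a ball or a sphere; threading through the recursion, one moreover finds it is a sphere exactly when the Demazure product of all of $Q$ equals $\pi$, and a ball otherwise.

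The main obstacle I expect is the ridge-counting --- a clean proof that a length-$(\ell(\pi)+1)$ word containing $\pi$ admits at most two single-letter deletions producing a reduced word for $\pi$. Folding this into the induction (the link of a ridge being a smaller subword complex, hence a $0$-ball or $0$-sphere) is the efficient route, but it demands care: the induction must be genuinely on word length, the degenerate cases --- the empty word, $m=\ell(\pi)$ and $m=\ell(\pi)+1$, and the void complex when $\pi\not\le\delta(Q)$ --- must be dispatched at the outset so no circularity creeps in, and the direct Demazure-product argument must actually be carried out in the base case rather than quietly assumed. A secondary point: purity of subword complexes underlies both the Provan--Billera dimension count and the hypotheses of Danaraj--Klee, so it should be proved first (it follows from the same link/deletion recursion).
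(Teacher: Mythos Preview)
The paper does not prove this theorem; it is quoted as an external result from \cite{KM03} (Knutson--Miller, \emph{Subword complexes in Coxeter groups}) and used as a black box in the proof of Proposition~\ref{prop:BAPmoves}. So there is no ``paper's own proof'' to compare against.

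That said, your proposal is essentially the original Knutson--Miller argument: the link/deletion recursion at the last letter, purity, vertex-decomposability by induction on word length, and then Danaraj--Klee. The identifications $\operatorname{link}_v\Delta(Q,\pi)=\Delta(Q',\pi)$ and the two cases for $\operatorname{del}_v$ are correct, as is the dimension count that verifies the shedding condition in the non-cone case. Your handling of the ridge count via links (reducing to the $0$-dimensional case) is the right structural move, and your base case $m=\ell(\pi)+1$ with $\delta(Q)>\pi$ correctly invokes the strong exchange condition on the reduced word $Q$ for $\delta$. The one place you flag as needing care --- the two-point case when $\delta(Q)=\pi$ and $|Q|=\ell(\pi)+1$ --- does require an actual argument (e.g.\ via the nil-Hecke relation $e_\sigma^2=e_\sigma$ in the Demazure monoid, or by tracking where the single length-collapse occurs and showing the two adjacent positions are the only removable letters); you are right to mark it as the crux rather than wave at it. One small terminological point: in the cone case $v$ is not a shedding vertex in the Provan--Billera sense (since $\operatorname{del}_v$ has the wrong dimension), but cones over vertex-decomposable complexes are vertex-decomposable directly, so the induction still closes.
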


\begin{Def}[Definition \# 2 of pipe dreams moves]
We defined pipe dream moves above in terms of exchanges of tiles on a particular pipe dream. In the context of subword complexes, moves correspond to going to a ridge and then going to another codimension-1 facet. This follows from the fact the notion give above of a move in terms of exchanging a cross and a near-miss can actually be thought of as adding a cross in the new location before removing the original one, and the ridges in the complex are where we have nonreduced pipe dreams.
\end{Def}

\begin{proof}[Proof of Prop \ref{prop:BAPmoves}]
By Proposition \ref{prop:subwdcomp}, there exists a subword complex $\Delta(Q_\lambda,f)$ for which the set of all pipe dreams for a siteswap $f$ on a patch $\lambda$ are the codimension-1 facets of this complex. By Theorem \ref{thm:ballsphere}, since subword complexes are homeomorphic to balls or spheres, they are connected in codimension-1. Therefore, any two pipe dreams for the same siteswap on the same patch must be connected by a sequence of moves. 
\end{proof}

We conclude this section with a picture of what $\Pi_{del,i}$ looks like on an open patch. Recall that this is the positroid variety which has siteswap $(k+1)^k0k^{n-k-1}$ rotated so the $0$ is in the $i$ slot. It is the largest positroid variety whose siteswap has a $0$ in the $i$-th slot. It will be easy to see that the pipe dream must look like the following: 

\begin{figure}[htbp]\includegraphics[scale=0.45,clip=true]{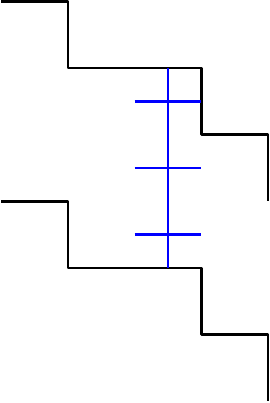}\end{figure}

Here is the proof. The siteswap is $(k+1)^k0k^{n-k-1}$. In the pipe dream above, the crosses are placed along the $i$-th slot, so that the only difference from the pipe dream with all elbows (producing the siteswap $kk\cdots kk$), is that the $k$ crosses along that vertical line cause the $k$ slots immediately prior to $i$ to move one slot ahead of what they would be in the all-elbow pipe dream. This is precisely the $(k+1)^k$ part immediately prior to the $0$ in the siteswap. Thus, this pipe dream produces the siteswap we wanted, and in fact it is the unique such pipe dream since there are no moves in the pipe dream. 

Another way to prove this is that, by the proof above, not only must the column to be deleted lie along a horizontal segment, but the entire column must be filled with crosses so that it ends in the minimal spot (in this case: itself, given by the siteswap notation of $0$ steps forward). Any additional crosses lowers the dimension of the variety. $\Pi_{del,i}$ was defined as the maximal positroid variety with a $0$ in the $i$-th slot, so it must be depicted by this pipe dream. 
\\

The case of  $\Pi_{contr,i}$ (which has siteswap $(k-1)^{n-k}k^{k-1}n$ rotated so the $n$ is in the $i$ slot) is analogous, with the crosses proceeding from the vertical line segment labeled $i$ in a horizontal line from the left to the right. 
\\

The next question to be asked is then what the deletion or contraction of a positroid variety looks like. We know what $\Pi_{del,i}$ and $\Pi_{contr,i}$ look like, from what we have just done. Previously, we defined the deletion or contraction of a positroid variety $\Pi_f$ as the intersection of $\Pi_f$ with $\Pi_{del,i}$ or $\Pi_{contr,i}$. So the question now is what this intersection looks like on an affine patch. We have the following proposition:

\begin{Prop}\label{prop:delpipedream}

The pipe dreams of the $i$-th deletion of a positroid variety $\Pi_f$ (that is, $del_i(\Pi_f)=\Pi_f \cap \Pi_{del,i}$) on the open patch $U_\lambda$ are given by taking the pipe dreams for $\Pi_f$ on $U_\lambda$ with the maximal number of crosses along the $i$th column, and then filling in this $i$th column with crosses.
\end{Prop}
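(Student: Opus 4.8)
The plan is to combine the algebraic characterization of projectionless deletion (Proposition \ref{prop:del2def}) with the explicit pipe-dream description of $\Pi_{del,i}$ that we just established, and to work inside the subword complex $\Delta(Q_\lambda, f)$ furnished by Proposition \ref{prop:subwdcomp}. First I would translate "take the pipe dreams for $\Pi_f$ on $U_\lambda$ with the maximal number of crosses along the $i$-th column, then fill that column with crosses" into an operation on pipe dreams, and observe that this operation is well-defined: by the discussion preceding the proposition, the $i$-th column of the strip lies along a single horizontal segment (once we are on the patch $U_\lambda$, the column to be deleted is a non-distinguished column, so it runs horizontally), and filling it entirely with crosses is exactly the local move that forces the siteswap to have a $0$ in slot $i$ — just as in the computation showing $\Pi_{del,i}$ has siteswap $(k+1)^k 0 k^{n-k-1}$. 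So the resulting diagram $\Omega'$ has a bounded affine permutation $g'$ satisfying $g'(i) = i$, i.e. the corresponding positroid variety is contained in $\Pi_{del,i}$.

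Next I would prove containment in both directions at the level of varieties. For $(\subseteq)$: any pipe dream counted on the right produces a positroid variety $\Pi_{g'} \subseteq \Pi_f$ (since we only added crosses, which can only shrink the variety / lengthen the word) and $\Pi_{g'} \subseteq \Pi_{del,i}$ (since $g'(i) = i$). Hence $\Pi_{g'} \subseteq \Pi_f \cap \Pi_{del,i} = Del_i(\Pi_f)$, which by Proposition \ref{prop:delirred} is a single positroid variety $\Pi_g$; so $\Pi_{g'} \subseteq \Pi_g$. To get equality rather than strict containment, I would argue via maximality: among all pipe dreams for $\Pi_f$, the ones with the maximal number of crosses in column $i$ are precisely those for which, after filling column $i$, no further crosses need be added — equivalently, the word obtained is as short as possible subject to having a $0$ in slot $i$. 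This matches the characterization in Proposition \ref{prop:del2def} of $Del_i(\Pi_f)$ as the \emph{largest} positroid variety contained in $\Pi_f$ with $f'(i) = 0$. For $(\supseteq)$: since $Del_i(\Pi_f) = \Pi_g$ is a positroid variety contained in $\Pi_f$ and in $\Pi_{del,i}$, it has some pipe dream $\Omega_g$ on $U_\lambda$; because $\Pi_g \subseteq \Pi_{del,i}$, its siteswap has a $0$ in slot $i$, so by the rigidity argument for $\Pi_{del,i}$ (an entire column of crosses, with any extra cross strictly lowering dimension) the whole $i$-th column of $\Omega_g$ must be crosses. Erasing that column of crosses yields a pipe dream for a strictly larger-or-equal positroid variety still contained in $\Pi_f$, and combined with maximality of the crosses along column $i$ this exhibits $\Omega_g$ as coming from the construction on the right-hand side.

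The cleanest way to make the "maximality" and "no extra crosses needed" steps precise is probably to phrase everything in the subword complex: pipe dreams for $\Pi_f$ on $U_\lambda$ are facets of $\Delta(Q_\lambda, f)$, and "fill column $i$ with crosses" corresponds to passing to the link of the simplex spanned by the column-$i$ positions; that link is itself a subword complex for the siteswap $g$ of $Del_i(\Pi_f)$, whose facets are exactly the pipe dreams on the right-hand side of the statement. The main obstacle I anticipate is precisely this matching of links: one must check that the set of crossings one is forced to add when filling column $i$ does not depend on which maximal-in-column-$i$ representative one starts from, and that one never needs to add crossings outside column $i$ — i.e. that the deletion is realized "locally." I expect this to follow from Proposition \ref{prop:BAPmoves} (any two pipe dreams for $g$ are connected by moves) together with the observation that moves cannot remove crosses from a fully-crossed column without changing the permutation, but pinning down the bookkeeping — especially the claim that filling column $i$ in a maximal representative already yields a valid (reduced) pipe dream with permutation $g$, rather than something that still needs correction — will be the delicate part.
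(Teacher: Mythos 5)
Your plan follows essentially the same route as the paper's proof: intersect with $U_\lambda$, use the irreducibility of $\Pi_f\cap\Pi_{del,i}$ from Propositions \ref{prop:delirred} and \ref{prop:del2def} to know the result is a single positroid variety, observe that a siteswap with a $0$ in slot $i$ forces column $i$ to be entirely crosses, and identify ``largest positroid variety contained in $\Pi_f$ with $f'(i)=0$'' with ``fewest added crosses,'' hence with starting from a representative of $f$ that is already maximal in column $i$. The paper states all of this informally and never invokes the subword complex; your proposal to formalize the maximality step inside $\Delta(Q_\lambda,f)$ is reasonable, though the relevant construction is closer to restricting the complex to the vertices outside column $i$ than to taking a link (the column-$i$ vertex set need not itself be a face of $\Delta(Q_\lambda,f)$). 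One step in your $(\supseteq)$ direction is wrong as written: erasing the \emph{entire} column of crosses from a pipe dream for $Del_i(\Pi_f)$ does not in general yield a pipe dream for a variety contained in $\Pi_f$ --- for $\Pi_f=\Pi_{del,i}$ itself, erasing the column produces the all-elbow pipe dream, i.e.\ all of $Gr(k,n)$. One should erase only the crosses that were added, which is precisely the bookkeeping you flag as delicate; the paper's own proof is equally brief on this point, so your proposal is at the level of rigor of the original.
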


\begin{Exa}
Here is a quick pictorial representation of $Del_4(\Pi_f)$ (assuming the blank parts of the pipe dream are all elbows): 

 \begin{figure}[htbp] \centering
	\includegraphics[scale=0.5,clip=true]{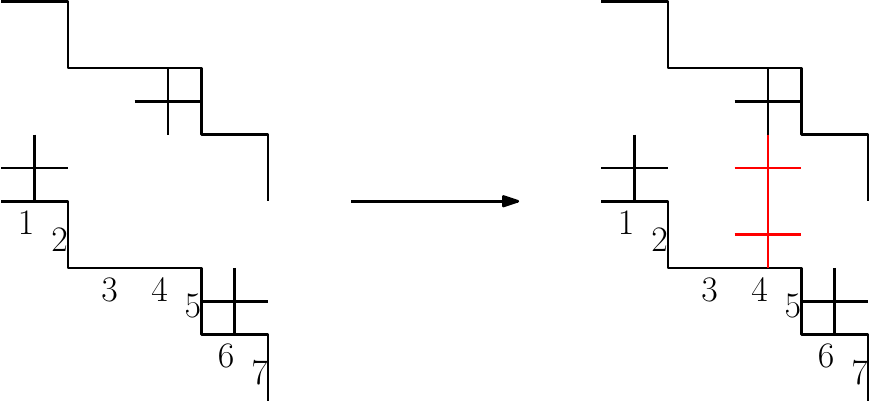}
\end{figure}

That is, we take the pipe dream(s) with the largest number of crosses along that column, and fill in the rest of this column with crosses (denoted by the red crosses in this example here).
\end{Exa}

We can read out the new (deleted) affine permutation by following the pipes along this new pipe dream; however, note that the affine permutation $del_i(f)$ is not dependent on which patch $U_\lambda$ we draw the pipe dream on. Thus, if the goal was simply to compute $del_i(f)$, rather than drawing the pipe dream (on a particular patch $U_\lambda$), we could do this more efficiently using an algorithm that we present in Appendix A.  

The case of contraction is analogous with the crosses filled in horizontally (in a row rather than a column of crosses).

\begin{proof}[Proof of Prop \ref{prop:delpipedream}]

We take $Del_i(\Pi_f)=\Pi_f \cap \Pi_{del,i}$ and intersect everything with the vector space $U_\lambda$:

\[Del_i(\Pi_f)\cap U_\lambda=(\Pi_f \cap U_\lambda) \cap (\Pi_{del,i} \cap U_\lambda).\] 

In other words, we get the components of $(\Pi_f \cap U_\lambda) \cap (\Pi_{del,i} \cap U_\lambda)$ by taking the components of the pipe dreams for $\Pi_f$ and the components of the pipe dreams for $\Pi_{del,i}$ and intersecting them and looking at the maximal element. Indeed, as we showed in the proof of Prop \ref{prop:del2def}, the intersection $Del_i(\Pi_f)=\Pi_f \cap \Pi_{del,i}$ is irreducible so is a single maximal positroid variety. We then intersect this positroid variety with $U_\lambda$, which is just looking at the pipe dreams for $Del_i(\Pi_f)$ on the shape defined by $\lambda$. 

Furthermore, since we know that the maximal intersection will have the fewest number of crosses (since each cross represents a variable that is set to zero - thus an extra condition that lowers the dimension of the variety), and since we know that deletion involves filling in the column entirely with crosses, we must choose to intersect with the pipe dream(s) for $(\Pi_f \cap U_\lambda)$ that already had the greatest number of crosses along this column so that filling in this column involves adding the minimal number of additional crosses.

\end{proof}

\section{Positroid Varieties and Smoothness}

We have already seen how to study positroid varieties on an open cover using affine pipe dreams. We have also seen how easy it is perform deletion and contraction on positroid varieties by using these affine pipe dreams. The main goal of this section is to prove Theorem \ref{thm:smoothone}, which will provide a means of studying smoothness or singularity of positroid varieties on this open cover using pipe dreams.

\begin{Thm}
One can check whether a positroid variety $\Pi_f$ is singular (or smooth) just by testing singularity at the fixed points $\lambda$.
\end{Thm}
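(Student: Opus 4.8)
The plan is to combine the elementary fact that the non-smooth locus of a variety is closed with the torus-equivariance of positroid varieties and Borel's fixed point theorem. Write $S=\mathrm{Sing}(\Pi_f)$ for the non-smooth locus of $\Pi_f$. Since $\Pi_f$ is a variety over $\C$, its smooth locus is open and dense, so $S$ is a closed proper subvariety of $\Pi_f$; being closed in the projective variety $Gr(k,n)$, it is complete. The goal is to show $S=\emptyset$ if and only if $S$ contains none of the fixed points $\lambda$, i.e. that a nonempty singular locus must reach a $T$-fixed point.

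The key observation is that $S$ is stable under the torus $T=(\C^{*})^{n}$ acting on $Gr(k,n)$. Indeed, $\Pi_f$ is cut out by rank conditions on cyclic intervals of columns, and these are preserved by rescaling columns, so $\Pi_f$ is $T$-invariant (standard; see \cite{KLS}). Hence each $t\in T$ acts on $\Pi_f$ as an automorphism $\phi_t\colon\Pi_f\to\Pi_f$, and an automorphism carries smooth points to smooth points and singular points to singular points, so $\phi_t(S)=S$ for all $t$.

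Now suppose $S\neq\emptyset$. Then $S$ is a nonempty complete variety equipped with an action of the connected solvable group $T$, so by Borel's fixed point theorem $S$ contains a $T$-fixed point $x$. But $x$ is then a $T$-fixed point of $Gr(k,n)$, and these are exactly the coordinate subspaces, i.e. the points $\lambda\in\binom{[n]}{k}$. Thus $\lambda:=x$ is a fixed point lying in $S\subseteq\Pi_f$ at which $\Pi_f$ is singular. Conversely, if $\Pi_f$ is singular at some fixed point then trivially $S\neq\emptyset$. Therefore $\Pi_f$ is singular if and only if it is singular at one of its $T$-fixed points $\lambda$; equivalently, $\Pi_f$ is smooth precisely when it is smooth at every $\lambda$ it contains. (One could instead avoid Borel's theorem by noting that for any $x\in S$ the orbit closure $\overline{T\cdot x}$ is a closed $T$-invariant subvariety of $S$, and that a generic one-parameter subgroup $\C^{*}\to T$ contracts a point of a $T$-orbit closure in $Gr(k,n)$ to a $T$-fixed point, which again is some $\lambda$.)

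There is no serious obstacle here: this is a routine application of the fixed point theorem, and the only step meriting care is the $T$-invariance of the singular locus, which reduces to the known $T$-invariance of $\Pi_f$ together with the fact that automorphisms preserve singularities. The genuine content of the paper is not this reduction but the pipe-dream criterion of Theorem \ref{thm:smoothone} that makes the fixed-point test effective.
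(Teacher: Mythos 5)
Your argument is correct and is essentially identical to the paper's proof: both establish that the singular locus is closed (hence complete), $T$-invariant, and then invoke Borel's fixed point theorem to produce a $T$-fixed point in it whenever it is nonempty. The only additions you make — explicitly justifying the $T$-invariance of $\Pi_f$ via the rank conditions and the optional one-parameter-subgroup alternative — are harmless refinements of the same route.
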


\begin{proof}

Fact (1): Borel's Fixed Point Theorem: If a solvable group (such as a torus) acts on some nonempty proper (such as projective) variety, then the fixed point set is also nonempty.
\\

Fact (2): The torus $T$ acts on the singular locus.

Proof: Whenever a group $G$ acts on a variety, the singular locus will be $G$-invariant (i.e. a group of symmetries will leave the singular locus alone). 
\\

Fact (3): Positroid varieties are proper. 
Positroid varieties are closed in Grassmannians which are closed in projective space (in the Plücker embedding).
\\

Fact (4): The singular locus of any variety is closed. 
\\

Suppose $\Pi_f$ has singular point(s), i.e. the singular locus $(\Pi_f)_{sing}$ is nonempty. Then we know that $(\Pi_f)_{sing}$ is closed in $\Pi_f$ by (4). Since $\Pi_f$ is proper by (3) and $(\Pi_f)_{sing}$ is closed in $\Pi_f$, we conclude that $(\Pi_f)_{sing}$ is also proper. By (2), the torus acts on the singular locus, so we can apply (1) Borel's Fixed point Theorem to it, to conclude that the singular locus includes at least one fixed point (under the torus action).  

Denoting the fixed points of the torus action with superscript $T$, we summarize:
\[ (\Pi_f)_{sing} \neq \emptyset \Leftrightarrow (\Pi_f)_{sing}^T \neq \emptyset\]
\end{proof}

In what follows, we will check these by checking on the affine open cover of the $U_\lambda$ described earlier. Clearly, this will include all the torus-fixed points $ \{\lambda\}$.

\subsection{Smoothness Theorem}\label{subsect:smooththm}

\begin{Thm}\label{thm:smoothone}
A positroid variety $\Pi_f$ is smooth at the point $\lambda$ (it meets $\lambda$ and is smooth there) if and only if there is a single affine pipe dream representative \cite{Sni} of the affine permutation $f$ on the open set $U_\lambda$. Specifically $\Pi_f$ meets the point $\lambda$ if and only if there exists (at least) one affine pipe dream, and it is smooth if and only if that affine pipe dream is unique. Thus, we can take $\Pi_f$ and look at pipe dreams for all choices of $\lambda$ to test for smoothness. 
\end{Thm}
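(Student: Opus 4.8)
The plan is to pass to the affine chart $U_\lambda$ and use Snider's identification \cite{Sni} of $\Pi_f \cap U_\lambda$ with an affine Kazhdan--Lusztig variety, together with the subword-complex and Gröbner machinery of Knutson--Miller \cite{KM03} and Theorem \ref{thm:ballsphere}. Two preliminary reductions are essentially formal. First, $\lambda \in U_\lambda$ and $U_\lambda$ is open in $Gr(k,n)$, so ``$\Pi_f$ is smooth at $\lambda$'' depends only on the germ of $\Pi_f \cap U_\lambda$ at the origin. Second, $\Pi_f$ is closed and $T$-invariant while $U_\lambda$ admits a one-parameter subgroup of $T$ contracting it to $\lambda$; hence if $\Pi_f \cap U_\lambda$ is nonempty it contains $\lambda = \lim_{t\to 0} t\cdot p$ for any of its points $p$, so $\lambda \in \Pi_f \iff \Pi_f \cap U_\lambda \neq \emptyset$.

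For the existence clause, recall that by Proposition \ref{prop:subwdcomp} the affine pipe dreams for $(f,\lambda)$ are exactly the facets of the subword complex $\Delta(Q_\lambda, f)$, and that this complex is nonempty if and only if $Q_\lambda$ contains $f$ as a subword. By Snider's theorem this last condition is equivalent to $\Pi_f \cap U_\lambda \neq \emptyset$, which by the previous paragraph is equivalent to $\lambda \in \Pi_f$. So $\Pi_f$ meets $\lambda$ if and only if there is at least one affine pipe dream.

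For the smoothness clause I would invoke the flat, $\mathbb{G}_m$-equivariant Gröbner degeneration of the Kazhdan--Lusztig variety $\Pi_f \cap U_\lambda$ to the Stanley--Reisner scheme of $\Delta(Q_\lambda, f)$, a reduced union of coordinate subspaces, one per facet, all of codimension $\codim \Pi_f$. Along this family the multidegree is constant, so the $T$-equivariant class of $\Pi_f$ at $\lambda$ is the sum over affine pipe dreams (as announced in Section \ref{section:affinepipe}); restricting the equivariant parameters to the diagonal --- that is, to the dilation circle --- turns this into $e_\lambda(\Pi_f)\cdot t^{\codim \Pi_f}$, where $e_\lambda(\Pi_f)$ is the Hilbert--Samuel multiplicity of $\Pi_f$ at $\lambda$, and does the same for the Stanley--Reisner scheme, whose multiplicity at the origin equals its number of top faces. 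Hence, when $\lambda \in \Pi_f$, $e_\lambda(\Pi_f)$ equals the number of affine pipe dreams. Finally, $\Pi_f \cap U_\lambda$ is Cohen--Macaulay (its special fiber, the Stanley--Reisner ring of $\Delta(Q_\lambda,f)$, is Cohen--Macaulay since by Theorem \ref{thm:ballsphere} that complex is a ball or sphere, and Cohen--Macaulayness passes from the special fiber to the general fiber of a flat family), so its local ring at $\lambda$ is formally equidimensional and multiplicity one is equivalent to regularity. Therefore $\Pi_f$ is smooth at $\lambda$ $\iff$ $e_\lambda(\Pi_f) = 1$ $\iff$ there is exactly one affine pipe dream; in that case $\Delta(Q_\lambda,f)$ is a single simplex and the degeneration is just to affine space, a consistency check. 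Together with the existence clause this is the theorem.

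The main obstacle is producing the flat degeneration of the Kazhdan--Lusztig variety to the Stanley--Reisner scheme of $\Delta(Q_\lambda,f)$ and extracting the multiplicity count from it; this is exactly where one must use the explicit equations supplied by \cite{Sni} and the Gröbner geometry of \cite{KM03}. Everything downstream is formal: the contracting-$\mathbb{G}_m$ arguments of the first two paragraphs, and the standard commutative-algebra fact that a formally equidimensional (in particular Cohen--Macaulay) Noetherian local ring of multiplicity one is regular.
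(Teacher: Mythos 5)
Your proposal is correct in outline, but it reaches the central identity ``multiplicity of $\Pi_f$ at $\lambda$ $=$ number of affine pipe dreams'' by a genuinely different route than the paper. The paper stays entirely in equivariant cohomology: it passes $[\Pi_f\cap U_\lambda]$ through Snider's $T$-equivariant isomorphism to $[X_{v(f)}\cap X_\circ^{v(\lambda)}]$, identifies this with the restriction $[X_{v(f)}]|_{v(\lambda)}$ via Kleiman transversality, expands that restriction by the AJS/Billey formula \cite{AJS/Billey}, and then does the nontrivial bookkeeping (Proposition \ref{prop:AJSrowcolform}) showing each root $\hat\beta_r$ equals $y_{\mathrm{row}}-y_{\mathrm{col}}$, so that specializing along the dilation circle sends every summand to $1\cdot h^{\codim\Pi_f}$; Rossmann's theorem \cite{Rossmann} then converts the coefficient into the multiplicity. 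You instead propose the flat $\mathbb{G}_m$-equivariant Gr\"obner degeneration of the Kazhdan--Lusztig patch to the Stanley--Reisner scheme of $\Delta(Q_\lambda,f)$ --- this is the content of \cite{KnScubDeg}, which the paper cites but does not use in its proof --- and read off the multiplicity as the number of facets, i.e.\ of pipe dreams. The trade-off: your route outsources the hard combinatorial step (the paper's $x_{\mathrm{row}}-x_{\mathrm{col}}$ computation) to the existence of the degeneration with the stated special fiber, which you correctly flag as the one non-formal ingredient; in exchange you get Cohen--Macaulayness of the patch for free from Theorem \ref{thm:ballsphere} via Reisner's criterion, which makes the implication ``multiplicity one $\Rightarrow$ regular'' airtight (the paper leans on a bare citation of Hartshorne here, though irreducibility of $\Pi_f\cap U_\lambda$ as a degree-one cone would also suffice). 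Your treatment of the existence clause and of the reduction to the chart ($\lambda\in\Pi_f\iff\Pi_f\cap U_\lambda\neq\emptyset$ via the contracting one-parameter subgroup) matches the paper's in substance, and Proposition \ref{prop:subwdcomp} supplies the pipe-dream/facet dictionary in both arguments.
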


We will spend most of the rest of this section proving this theorem. We begin by developing some of the background facts.

\begin{Def} 
The \textbf{multiplicity} $Mult(x\in X)$ is the degree of the tangent cone.
\end{Def}

\begin{Thm}
[\cite{Hartshorne} Exercise 5.3a] $Mult(x\in X)=1$ if and only if $X$ is regular (or smooth) at $x$. 
\end{Thm}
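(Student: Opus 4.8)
The plan is to make the classical statement precise and then prove the two directions separately, after rephrasing everything in terms of the local ring. Write $\mathcal{O} := \mathcal{O}_{x,X}$, let $\mathfrak{m}\subseteq\mathcal{O}$ be its maximal ideal, $d=\dim\mathcal{O}$, and let $\operatorname{gr}_{\mathfrak{m}}\mathcal{O}=\bigoplus_{i\ge 0}\mathfrak{m}^i/\mathfrak{m}^{i+1}$ be the homogeneous coordinate ring of the tangent cone $C_xX$. By definition the degree of $C_xX$ is $(d-1)!$ times the leading coefficient of the Hilbert polynomial $n\mapsto\dim_{\mathbb{C}}(\mathfrak{m}^n/\mathfrak{m}^{n+1})$, i.e. exactly the Hilbert--Samuel multiplicity $e(\mathfrak{m};\mathcal{O})$; so the assertion becomes ``$e(\mathfrak{m};\mathcal{O})=1\iff\mathcal{O}$ is regular.'' Both directions reduce to facts about $\mathcal{O}$ and, for the hard half, about its completion.

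The implication ``$\mathcal{O}$ regular $\Rightarrow Mult=1$'' is the easy one: a regular system of parameters induces an isomorphism $\operatorname{gr}_{\mathfrak{m}}\mathcal{O}\cong\mathbb{C}[T_1,\dots,T_d]$, so the projectivized tangent cone is $\mathbb{P}^{d-1}$, which has degree $1$.

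For the converse I would complete and then strip the ring down. Passing from $\mathcal{O}$ to $\widehat{\mathcal{O}}$ changes neither $d$, nor regularity, nor $e(\mathfrak{m};\cdot)$, since the Hilbert function of $\mathfrak{m}$ is unchanged. Because $X$ is a variety over $\mathbb{C}$, $\mathcal{O}$ is an excellent reduced (indeed integral) local ring, so $\widehat{\mathcal{O}}$ is again reduced and equidimensional. As $\mathbb{C}$ is infinite, choose a minimal reduction $\mathfrak{q}=(x_1,\dots,x_d)$ of the maximal ideal of $\widehat{\mathcal{O}}$; reductions preserve multiplicity, so $e(\mathfrak{q};\widehat{\mathcal{O}})=1$. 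The associativity formula for multiplicities then gives
\[
1 \;=\; e(\mathfrak{q};\widehat{\mathcal{O}}) \;=\; \sum_{P}\operatorname{length}_{\widehat{\mathcal{O}}_P}\!\big(\widehat{\mathcal{O}}_P\big)\cdot e\big(\mathfrak{q};\widehat{\mathcal{O}}/P\big)\;\ge\;\#\{P\},
\]
the sum running over the minimal primes $P$ of $\widehat{\mathcal{O}}$, all of which satisfy $\dim\widehat{\mathcal{O}}/P=d$ by equidimensionality. Hence $\widehat{\mathcal{O}}$ has a unique minimal prime; being reduced, that prime is $(0)$, so $\widehat{\mathcal{O}}$ is a complete local domain with $e(\mathfrak{q};\widehat{\mathcal{O}})=1$. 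Now the Cohen structure theorem realizes $\widehat{\mathcal{O}}$ as a module-finite extension of the regular local ring $A:=\mathbb{C}[[x_1,\dots,x_d]]$ with $A\subseteq\widehat{\mathcal{O}}$ and $\mathfrak{q}=\mathfrak{m}_A\widehat{\mathcal{O}}$; for a module-finite torsion-free extension of a regular local ring one has $e(\mathfrak{q};\widehat{\mathcal{O}})=[\operatorname{Frac}\widehat{\mathcal{O}}:\operatorname{Frac}A]$. Thus $e=1$ forces $\operatorname{Frac}\widehat{\mathcal{O}}=\operatorname{Frac}A$, and since $\widehat{\mathcal{O}}$ is integral over $A$ and $A$ is integrally closed (regular rings are normal), we conclude $\widehat{\mathcal{O}}=A$, a power series ring, hence regular; therefore $\mathcal{O}$ is regular.

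The step I expect to be the crux --- and the only place where ``$X$ is a variety'' (as opposed to an arbitrary scheme) is genuinely used --- is the reduction itself: one must know that $\widehat{\mathcal{O}}$ is reduced and equidimensional before the associativity formula and Cohen's theorem can be brought to bear. Equidimensionality is really needed, since, e.g., a line with an embedded point has multiplicity $1$ there yet is singular. If one prefers not to spell out this reduction, one may instead cite Nagata's multiplicity-one theorem for local rings whose completion is reduced and equidimensional, and simply observe that the local ring of a point on a complex variety meets that hypothesis; the easy direction is as above in either case.
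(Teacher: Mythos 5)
Your proof is correct, but there is nothing in the paper to compare it against: the paper states this result purely as a citation (Hartshorne, Exercise I.5.3a, in the guise ``degree of the tangent cone equals one iff regular'') and supplies no argument of its own. What you have written out is the standard proof of Nagata's multiplicity-one criterion specialized to points of a complex variety. The easy direction via $\operatorname{gr}_{\mathfrak{m}}\mathcal{O}\cong\mathbb{C}[T_1,\dots,T_d]$ is fine, and your identification of the degree of the tangent cone with the Hilbert--Samuel multiplicity $e(\mathfrak{m};\mathcal{O})$ matches the paper's definition of $Mult$. In the converse you correctly isolate the genuinely geometric input --- that excellence of $\mathcal{O}$ gives a reduced completion and that an excellent local domain has equidimensional completion --- before invoking the associativity formula to get a complete local domain, and then Cohen's structure theorem plus $e(\mathfrak{q};\widehat{\mathcal{O}})=[\operatorname{Frac}\widehat{\mathcal{O}}:\operatorname{Frac}A]$ and normality of $A$ to conclude $\widehat{\mathcal{O}}=A$. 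Your remark that equidimensionality cannot be dropped (line with embedded point) is exactly the right caveat. The one cosmetic mismatch is that Hartshorne's exercise as literally stated concerns plane curves, where a much shorter argument with the lowest-degree form of $f$ suffices; your argument proves the general statement the paper actually uses, which is the stronger and more useful thing to have.
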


Thus, proving that ``smoothness means one pipe dream" has become ``multiplicity 1 means one pipe dream"; the next theorem shows that alternatively we have ``coefficient 1 in equivariant cohomology means one pipe dream." For an introduction to equivariant cohomology, especially in this context, see for example sections 1.2, 2.1 of \cite{KnTao01}. Note that the idea of using equivariant cohomological techniques to compute multiplicities of points was used in for example \cite{GraKre}.

\begin{Thm}\label{thm:Rossmann}\cite{Rossmann}
Let $V$ be a vector space, and $X \subseteq V$ be a cone, i.e. a subvariety that is invariant under the dilation action of $\C^\times$ on $V$ (that is, $\C^\times$ acts with all weights $1$ on $V$). Denote the equivariant cohomology $H_{\C^\times}^*(V)$ by $\Z[h]$. Then the multiplicity of $X$ at $0\in V$ is the coefficient of $h^{\codim X}$: 
\[[X]_{\C^\times} = (\deg X)h^{\codim X} \in H^*_{\C^\times}(V)\]
\end{Thm}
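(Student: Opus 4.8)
The plan is to prove the Rossmann-type statement (Theorem~\ref{thm:Rossmann}) by reducing it to the computation of the equivariant cohomology class of a cone inside a vector space and identifying that class with the degree of the tangent cone. The key point is that for a cone $X \subseteq V$ invariant under the dilation action of $\C^\times$, there are two natural ways to compute $[X]_{\C^\times} \in H^*_{\C^\times}(V) = \Z[h]$: one geometric (via intersection with a generic equivariant subspace) and one algebraic (via the Hilbert series / graded multiplicity of the coordinate ring). First I would recall that $H^*_{\C^\times}(V) \cong H^*_{\C^\times}(\mathrm{pt}) = \Z[h]$ since $V$ is equivariantly contractible, and that the equivariant fundamental class $[X]_{\C^\times}$ is a homogeneous polynomial in $h$ of degree $\codim X$ because $X$ has pure codimension and the class lives in $H^{2\codim X}_{\C^\times}(V)$.

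Next I would pin down the leading coefficient. The plan is to use that the $\C^\times$-equivariant class of a cone can be read off from the equivariant Poincaré series (equivariant multiplicity / character) of $X$ at the origin: writing the class of a coordinate subspace of codimension $c$ as $h^c$ (each coordinate function has weight $1$), one shows that $[X]_{\C^\times}$ is the ``lowest-degree term'' of the equivariant $K$-class / Hilbert series of $\C[X]$ suitably normalized, and that this lowest-degree term is $(\deg X) h^{\codim X}$. Here $\deg X$ is by definition the multiplicity of the tangent cone at $0$, which for a cone $X$ equals the degree of $X$ as a projective variety in $\PP(V)$; this is exactly where the hypothesis that $X$ is a cone is used — the tangent cone at $0$ is $X$ itself. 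Concretely, if $\dim X = d$ and $\codim X = c = \dim V - d$, then the Hilbert polynomial of $\C[X]$ has leading term $(\deg X)\, t^{d-1}/(d-1)!$, and tracking the $\C^\times$-weights through the localization / equivariant Chern class formalism converts this leading coefficient into the coefficient of $h^c$ in $[X]_{\C^\times}$.

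I would organize the actual argument as follows: (i) reduce to the case where $V = \C^N$ with the standard scaling action and $X$ is a cone defined by a homogeneous ideal $I$; (ii) invoke the standard identity (e.g. from the equivariant intersection theory of affine space, or directly from Rossmann's original paper \cite{Rossmann}) that $[X]_{\C^\times}$ equals the class obtained by degenerating $X$ to its tangent cone at $0$, which is $X$ itself; (iii) compute $[X]_{\C^\times}$ for $X$ a cone by intersecting with a generic $\C^\times$-invariant linear subspace $L$ of complementary dimension $c$: the intersection number $\#(X \cap L)$ counted with multiplicity is $\deg X$, and the class of $L$ is $h^c$, giving $[X]_{\C^\times} = (\deg X) h^c$. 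The main obstacle I anticipate is making step (ii)--(iii) rigorous without circularity: one must be careful that ``$\deg X$'' appearing on the right is genuinely the degree of the tangent cone (equivalently, the multiplicity $\mathrm{Mult}(0 \in X)$) and not merely the degree of the projectivization, and that the generic linear section argument is valid equivariantly — i.e., that a generic codimension-$c$ coordinate (hence $\C^\times$-invariant) subspace meets $X$ transversally away from $0$ in exactly $\deg X$ points. Since this is a cited result from \cite{Rossmann}, I would ultimately lean on that reference for the delicate transversality/excess-intersection bookkeeping and present the argument above as the conceptual justification.
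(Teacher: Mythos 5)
The paper does not actually prove this statement: it is imported verbatim from Rossmann's paper \cite{Rossmann} and used as a black box, so there is no in-paper argument to measure your proposal against. Judged on its own terms, your second paragraph --- identifying $[X]_{\C^\times}$ with the multidegree of the graded ring $\C[X]$ (all variables in weight $1$), whose value for a $d$-dimensional module is $e\,h^{\codim X}$ with $e$ the normalized leading coefficient of the Hilbert polynomial, and then using that the tangent cone of a cone at its vertex is the cone itself so that $e = \deg \PP(X) = \mathrm{Mult}(0\in X)$ --- is the standard and correct route, and is essentially complete modulo the additivity and normalization properties of multidegrees.

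Your step (iii), however, has two genuine problems as written. First, the dimension bookkeeping is off: for the product $[X]\cdot[L]$ to land in the top degree you need $\codim L = \dim X$, i.e.\ $\dim L = c$ and $[L] = h^{\dim X}$, not $[L] = h^c$ as you wrote. Second, a $\C^\times$-invariant $L$ is a linear subspace through the origin, so it can never meet the cone $X$ ``transversally away from $0$ in finitely many points'': any intersection point $p \neq 0$ forces the whole line $\C p$ to lie in $X \cap L$. A generic linear $L$ of dimension $c$ meets $X$ only at the origin, and the relevant count is the local intersection multiplicity there; making that equal to $\deg X$ requires either Serre's Tor formula (the naive length of $\C[X]/(d$ generic linear forms$)$ can exceed $\deg X$ when $X$ is not Cohen--Macaulay) or a deformation to a non-invariant affine translate, which leaves the equivariant setting. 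So the geometric version of the argument needs real repair; the algebraic multidegree version you sketched in paragraph two is the one to rely on.
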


Remark: We use the notation $[X]_{\C^\times}$ to disambiguate from $[X]_T \in H^*_T(V)$, which we will see later. 

In our case, this means that we want to show that $[\Pi_f \cap U_\lambda] = 1 \cdot h^{codim X} \in H_{\C^\times}^*(U_\lambda)$ is equivalent to $\Pi_f$ having a unique pipe dream on $U_\lambda$. More generally, we will show that the multiplicity counts the number of affine pipe dreams. 

The proof goes by showing the following sequence of equalities:

\begin{equation*}\label{equivcoheq} 
(*) [\Pi_f \cap U_\lambda \subseteq U_\lambda] \overset{(1)}{=} [X_{v(f)} \cap X_\circ^{v(\lambda)} \subseteq X_\circ^{v(\lambda)}] \overset{(2)}{=} [X_{v(f)}]|_{v(\lambda)} \overset{(3)}{=} \sum_R \prod_{r\in R}  \hat{\beta_r} \in H_T^*(U_\lambda) \cong \Z[y_1,...,y_n]
\end{equation*}

The following subsections will individually go through equalities (1), (2), and (3) in this equation labeled $(*)$, as well as a last part (4) which maps from the $T$-equivariant cohomology to a circle $S$ within the torus $T$.

Note on notation: $f$ is a siteswap and $v(f)$ its corresponding bounded affine permutation, which indexes a $T$-fixed point on the affine flag variety $\widehat{GL_n}/\widehat{B}$ (note: $\widehat{GL_n}$ denotes $n\times n$ invertible matrices where the entries are Laurent series in one variable, and this is infinite-dimensional). Then $X_\circ^{v(\lambda)}$ is the $B$-orbit through the point $v(\lambda)B/B$; $X_\circ^{v(\lambda)} := B v(\lambda) B/B$. And  $X_{v(f)}$ is the $B_-$-orbit closure through $v(f)$; $X_{v(f)} := \overline{B_- v(f) B}/B$.

\subsubsection{Equation $(*)$ Part (1)}

Equality (1) follows from the work of \cite{Sni} Section 4.2, which says we have a T-equivariant commutative diagram:

$$
 \begin{tikzcd}
U_\lambda \ar[r,leftarrow,"\cong"]  & X_\circ^{v(\lambda)} \\
\Pi_f \cap U_\lambda \ar[r,leftarrow,"\cong"] \ar[u,hookrightarrow] & X_{v(f)} \cap X_\circ^{v(\lambda)} \arrow[u,hookrightarrow] 
\end{tikzcd}
$$

On the level of T-equivariant cohomology, this gives rise to:
\\

$$
\begin{tabular}{ccc}
$g^*: H_T^*(U_\lambda)$ & $\ra{\cong}$ & $H_T^*(X_\circ^{v(\lambda)})$ \\
$\rotatebox[origin=c]{90}{$\in$}$ & & $\rotatebox[origin=c]{90}{$\in$}$ \\
$[\Pi_f \cap U_\lambda ]$ & $\mapsto$ & $[X_{v(f)} \cap X_\circ^{v(\lambda)} ]$ 
\end{tabular}
$$

Since $U_\lambda$ and $X_\circ^{v(\lambda)}$ are contractible, we have canonical isomorphisms: $H_T^*(U_\lambda) \cong H_T^*(pt) \cong H_T^*(X_\circ^{v(\lambda)})$. 
$$\begin{tikzcd} & H_T^*(pt) \ar[dl,leftrightarrow,"\cong"'] \ar[dr, leftrightarrow,"\cong"] & \\ H_T^*(U_\lambda)  \ar[rr,"g^*"] & & H_T^*(X_\circ^{v(\lambda)}) \end{tikzcd}$$ 
Using these isomorphisms to identify each with $H_T^*(pt)$, the polynomials $[\Pi_f \cap U_\lambda]$ and $[X_{v(f)} \cap X_\circ^{v(\lambda)} ]$ are equal. 

This completes the justification of equality (1). Next we prove equality (2): $[X_{v(f)} \cap X_\circ^{v(\lambda)} \subseteq X_\circ^{v(\lambda)}] = [X_{v(f)}]|_{v(\lambda)}$.

\subsubsection{Equation $(*)$ Part (2)}

The right side of this equality is the restriction of the class $[X_{v(f)}]$ to the point $v(\lambda)B/B$ (in the following, we will denote this as simply $v$ or $v(\lambda)$). Naively, we might consider this via the diagram: 

$$\begin{tikzcd}
 & X_{v(f)} \ar[d, hookrightarrow] \\
v  \ar[r, hookrightarrow, "\iota"]  & G/B 
\end{tikzcd}
$$

We factor this inclusion as follows:

$$\begin{tikzcd}
& X_{v(f)} \cap X_\circ^{v(\lambda)} \ar[d, hookrightarrow]  \ar[r] & X_{v(f)} \ar[d, hookrightarrow] \\
v  \ar[r, hookrightarrow] & X_\circ^{v(\lambda)} \ar[r, hookrightarrow]  & G/B 
\end{tikzcd}
$$

The bottom row induces maps in cohomology:
\[H^*_T(v)  \leftarrow  H^*_T(X_\circ^{v(\lambda)})  \leftarrow H^*_T(G/B) \]
The goal is to show that under this induced map:
\[ [X_{v(f)}] |_{v} \mapsfrom  [X_{v(f)} \cap X_\circ^{v(\lambda)}]  \mapsfrom [X_{v(f)}] \]

We know that the composition of the two maps will take $[X_{v(f)}]$ to $[X_{v(f)}] |_{v}$ because cohomology is functorial so the pullback on cohomology of the composite map $v \to X_\circ^{v(\lambda)} \to G/B$ is the composite of the pullback; in other words, once we know where $v$ maps to, given by the map $v\hookrightarrow G/B$, this determines the map $H^*_T(G/B) \to H^*_T(v)$ regardless of how we might factor the map in the middle, and this defines $[X_{v(f)}] |_{v}$ as the image of this map. 

The harder thing is we need to show that this map hits $[X_{v(f)} \cap X_\circ^{v(\lambda)}]$ in the factorization through the middle space $X_\circ^{v(\lambda)}$. This will follow from transversality of $X_{v(f)}$ and $X^{v(\lambda)}_\circ$ \cite{Kleiman}. We can apply Kleiman transversality because $X_\circ^{v(\lambda)}$ is a $B$-orbit, and and $X_{v(f)}$ is $B_-$ invariant, so then we conclude that under the map $g: X_\circ^v \hookrightarrow G/B$ going to $g^*: H_T^*(G/B) \to H_T^*(X_\circ^v)$, $g^*([X_{v(f)} ]) = [X_{v(f)} \cap X_\circ^{v(\lambda)} ]$.

Finally, to prove equality (2), we need to show that  $[X_{v(f)} \cap X_\circ^{v(\lambda)}]$ doesn't just map to $[X_{v(f)}] |_{v}$, but that they are equal: we could see this already from the discussion preceding the triangle commutative diagram above, since $X_\circ^{v(\lambda)}$ is equivariantly contractible to $v$, so $H_T^*(v) \cong H_T^*(X_\circ^{v(\lambda)})$. 

This completes the proof of equality (2).

\subsubsection{Equation $(*)$ Part (3)}
 
Equality (3) is the AJS/Billey formula \cite{AJS/Billey}, which gives the restriction of a Schubert class to a fixed point. We state it here:

\begin{Def}
Let $Q$ be a word in the simple roots of a Kac-Moody group (see \cite{Kumar} Prop 11.1.11 for the Kac-Moody case). We define $\beta_i$ as follows:
\[\beta_i := (\prod_{j=1}^{i-1} r_{Q(j)})\cdot (x_{Q(i)} - x_{Q(i)+1}) =  r_{Q(1)}\cdots r_{Q(i-1)} \cdot (x_{Q(i)} - x_{Q(i)+1}), i\in [m] \]
\end{Def}
Let $\prod Q$ be a word in the letters $Q(i)$ so $\prod Q$ is an element of a Coxeter group (in our case, this will be a product of transpositions representing a bounded affine permutation), and $[X_w]|_{\prod Q}$ denote the restriction of the Schubert class $[X_w]$ to the point in the (affine) flag variety represented by $\prod Q$. Then the \textbf{AJS/Billey formula} says:

\[[X_w]|_{\Pi Q}= \sum_{\substack{R\subseteq Q \\ R\text{ reduced}}} \prod_{r\in R}  \hat{\beta_r}  \]
\\

Our work so far has proven:
\[ [\Pi_f \cap U_\lambda]_T = \sum_{\substack{R\subseteq Q \\ R\text{ reduced}}} \prod_{r\in R}  \hat{\beta_r} \]
We state a proposition for what the right hand side looks like.

\begin{Prop}\label{prop:AJSrowcolform}
\[ [\Pi_f \cap U_\lambda]_T = \sum_{\substack{R\subseteq v(\lambda) \\ \prod R=v(f) \\ R \text{ reduced}}} \prod_{r\in R}  [(y_i)_r - (y_j)_r] \]

The AJS/Billey formula produces a sum of products of $(y_i-y_j)$: the terms multiplied together to form a product are indexed by $r\in R$, and each $r$ corresponds to a cross at $(j,i)$ in a pipe dream representative for the given positroid variety on the given patch. Specifically, the $i$ corresponds to the column and the $j$ to the row in the pipe dream. The labelling is given by the numbering of the distinguished path explained in Appendix B; thus, the rows $j$ will come from the vertical boundary line segments, which correspond to $\lambda$, where the columns $i$ will be from the horizontal boundary line segments found in $[n]\backslash \lambda$. 
\end{Prop}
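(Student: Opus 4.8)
The plan is to combine three ingredients, two of them already in hand: the AJS/Billey formula (equalities (1)--(3) of $(*)$), the subword-complex/affine-pipe-dream dictionary of Proposition~\ref{prop:subwdcomp}, and a purely local computation of the Billey root attached to one box of the periodic strip. Write $Q:=v(\lambda)$ for the word of the patch $U_\lambda$ (so $\prod Q$ indexes the fixed point $\lambda$, and a cross in a given box contributes the simple reflection labelling that box's diagonal, as explained in Appendix~B).

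First I would record what Parts (1)--(3) already establish, namely
\[
[\Pi_f\cap U_\lambda]_T=\sum_{\substack{R\subseteq Q\\ \prod R=v(f)\\ R\text{ reduced}}}\ \prod_{r\in R}\widehat{\beta_r},
\]
so that only two things remain: to reindex the outer sum by pipe dreams, and to simplify each factor $\widehat{\beta_r}$. The first is pure citation: by Proposition~\ref{prop:subwdcomp} the reduced subwords $R\subseteq Q$ with $\prod R=v(f)$ are exactly the crossing sets of the affine pipe dreams for $f$ on $U_\lambda$, a letter $r$ of $R$ corresponding to a cross placed in some box of one period of the strip.

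The substance is the computation of $\widehat{\beta_r}$. Fix a box $r$ of the strip, lying in row $j$ and column $i$; by the numbering of the distinguished path fixed in Appendix~B, the rows are indexed by the vertical boundary segments (i.e.\ by $\lambda$) and the columns by the horizontal boundary segments (i.e.\ by $[n]\setminus\lambda$). By definition $\beta_r=\bigl(r_{Q(1)}\cdots r_{Q(r-1)}\bigr)\bigl(x_{Q(r)}-x_{Q(r)+1}\bigr)$, which --- and this is the feature that makes the whole product factor in a useful way --- depends only on the box $r$, not on the pipe dream $R$ one is summing over. Now recall the standard wiring-diagram reading of Billey's formula: $\beta_r=x_a-x_b$, where $a$ and $b$ are the original labels of the two pipes that meet at box $r$ in the wiring diagram of $Q$, that is, in the all-crosses affine pipe dream on $U_\lambda$. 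In the all-crosses pipe dream every crossing tile passes pipes straight through, so each pipe entering through a vertical boundary segment runs unobstructed along its row and each pipe entering through a horizontal boundary segment runs straight along its column; hence the pipe of row $j$ and the pipe of column $i$ cross exactly once, at their common box $r$. Their original labels are the boundary-segment labels $j$ and $i$, so $\beta_r=\pm(x_i-x_j)$, and with the orientation fixed in Appendix~B this gives $\widehat{\beta_r}=y_i-y_j$ under the identification $H^*_T(U_\lambda)\cong\Z[y_1,\dots,y_n]$, $x_\ell\mapsto y_\ell$. Substituting back and using the reindexing above yields the stated formula, with $i$ the column and $j$ the row of the cross $r$.

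The main obstacle is exactly this last bookkeeping: matching the abstract partial-product definition of $\beta_r$ with the geometric statement ``which two pipes meet at box $r$ in the all-crosses strip.'' This requires care about the reading order that turns one period of boxes into the word $Q$, about the fact that it is the all-crosses configuration (not the individual pipe dream $R$) that controls the partial product, about the sign so that the positive root $x_i-x_j$ rather than $x_j-x_i$ appears, and about the precise shape of the periodic strip --- the distinguished path together with its translates up and to the right --- which is what keeps the horizontal and vertical runs of the all-crosses pipe dream unobstructed within a single period. Once these conventions are pinned down the calculation is immediate, and the resulting description of $i$ and $j$ as column and row is the one recorded in Appendix~B.
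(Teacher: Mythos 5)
Your proposal is correct and lands on the same formula, but it reorganizes the argument around a different key fact. The paper's proof works directly with the partial products: it sets up the heap structure of the periodic strip, verifies that Snider's reading order obeys a southwest rule, and then explicitly pushes $x_{Q(i)}-x_{Q(i)+1}$ through the reflections coming from the row to the west (landing on $x_{row}$) and the column to the south (landing on $x_{col}$), using the observation that each diagonal $s_j$ occurs at most once in that region. You instead cite the standard wiring-diagram interpretation of Billey's roots --- that $\beta_r$ is the difference of the labels of the two wires crossing at position $r$ in the wiring diagram of the ambient reduced word $Q$ --- and then read off those two labels from the all-crosses configuration of the strip, where the row-$j$ pipe and column-$i$ pipe run straight and meet exactly once at box $(j,i)$. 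This is a legitimate and arguably cleaner packaging, and your observation that $\beta_r$ depends only on the box and not on the subword $R$ is exactly the right reason the product factors uniformly over pipe dreams. The one caveat is that the step you defer as ``bookkeeping'' --- checking that the two-dimensional periodic strip, read in Snider's order, really is the wiring diagram of $Q$ with pipes labelled by the boundary segments, and that periodicity does not cause a row pipe and a column pipe to meet more than once --- is precisely where the paper spends its effort (the SW-rule and width-$k$/height-$(n-k)$ arguments). You correctly identify this as the crux and note that the bounded strip dimensions are what make it work, so nothing is wrong, but a complete write-up would need to discharge that verification rather than flag it.
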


\begin{proof}

First, we note that we can think about the letters or transpositions in the affine pipe dream shape as SW to NE diagonals where all the letters are the same. See the following figure:

\begin{figure}[htbp] \centering
	\includegraphics[scale=0.7,clip=true]{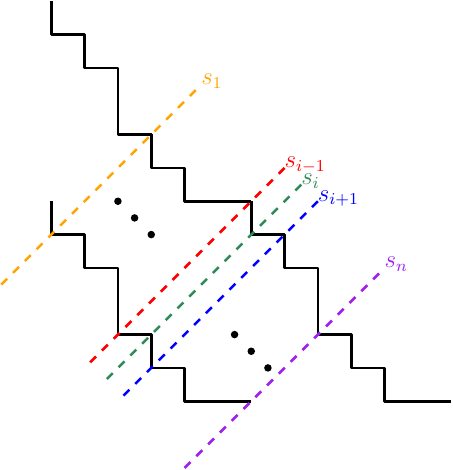}
\end{figure}

\newpage

The important thing to note is that each $s_i$ commutes with all other $s_j$ except those in its neighboring adjacent diagonals $s_{i-1}$ and $s_{i+1}$. This implies that the invariant information of a word can be depicted in a heap (see e.g. \cite{Stem} for details), since a heap diagram only keeps track of the information about the order in which adjacent $s_i$'s are read. Any two words with the same heap will give the same permutation (c.f. for example \cite{BJN}). 

The natural heap our affine pipe dream shapes provide can be obtained by rotating the pipe dream 45 degrees counterclockwise (c.f. the ``bottom pipe dream" of Appendix B): 

\begin{figure}[htbp] \centering
	\includegraphics[scale=0.6,clip=true]{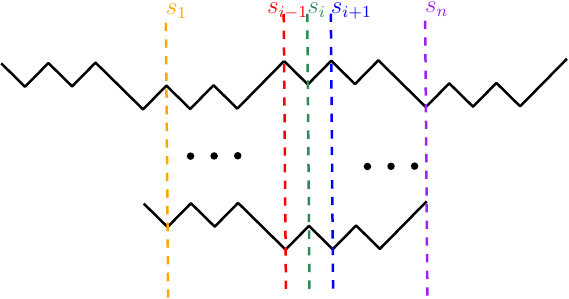}
\end{figure}

There is a question of how to read the letters to form a word. This heap corresponds to a southwest (denoted SW for short) rule for reading, where we always only choose a (letter in a) square $s_i$ if all squares southwest have already been chosen. In other words, regardless of how we choose to read the letters within the affine pipe dream, any choices of order that satisfy the SW rule will all be equivalent because they have the same heap (they will be related by commuting moves, since all non-commuting moves will be the same, being determined by the SW rule).
\\

Thus, in order to prove the proposition, we just need to show 2 things: (1) that the way of reading the word in \cite{Sni}, which is the way that we read it in this paper, satisfies the SW rule, and (2) we can choose an order satisfying the SW rules that makes it apparent that when we calculate the root $\beta_i=\prod_{j=1}^{i-1} r_{Q(j)} (x_{Q(i)} - x_{Q(i)+1})$, it equals $x_{row}-x_{col}$. 

For (1), it will be easiest to follow the argument if an example is presented. We give an example in Gr(4,10): (After this proof is completed, we continue this example in more detail.) 

\newpage

\begin{figure}[htbp] \centering 	\includegraphics[scale=0.5,clip=true]{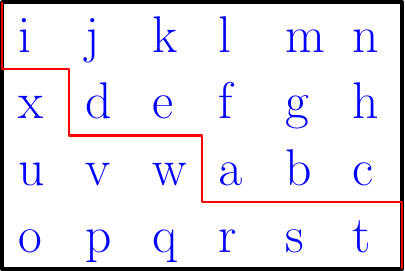} \end{figure}

Thus, the word would be: $Q(a)Q(b)Q(c)Q(d)Q(e)Q(f)Q(g)Q(h)Q(i)Q(j)Q(k)Q(l)Q(m)$ $Q(n)Q(o)Q(p)Q(q)Q(r)Q(s)Q(t)Q(u)Q(v)Q(w)Q(x)$.

The way of reading the word in \cite{Sni}, which we use in this paper, satisfies the SW rule because we start in the left corner or the lowest row, which has a boundary to its south. We always read left to right in a row, with all letters to the south already chosen since we read starting from lower rows. This proves that we satisfy the SW rule.  

(2) Thus, our work in part (1) shows that $\beta_i=\prod_{j=1}^{i-1} r_{Q(j)} (x_{Q(i)} - x_{Q(i)+1})$ equals any product $\prod r_{Q(l)} (x_{Q(i)} - x_{Q(i)+1})$ where $\prod r_{Q(l)}$ satisfies the SW rule. 

We make one observation, which is the affine pipe dream diagram has width $k$ for any row, and height $n-k$ for any column, so this is in particular the maximum width and height if we go west and south from any square in the pipe dream:

\begin{figure}[htbp] \centering
	\includegraphics[scale=0.6,clip=true]{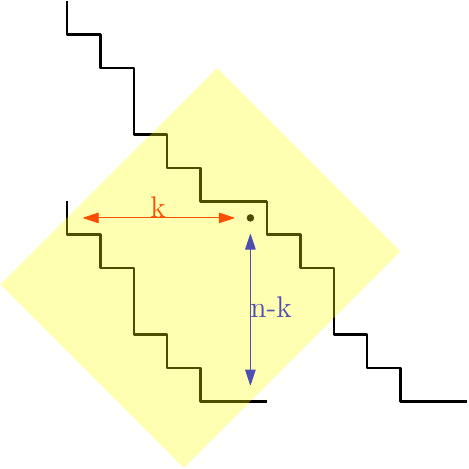}
\end{figure}

This means that if we go left from $s_i$, the leftmost square will be a maximum of $k-1$ steps left, and if we go south, the southernmost square will be a maximum $n-k-1$ steps down, so the entire region between where it hits the left boundary and south boundary (highlighted in yellow in the previous figure) will have no overlaps, and will contain $n-1$ diagonals $s_{j\mod n}, s_{j+1\mod n},...,s_{j-2\mod n}$. 
 
Now, for any square $x$, we consider just the word for which $x$ is the very northeastern corner, and otherwise, the word contains the squares within the area to its southwest, e.g.:

\begin{figure}[htbp] \centering
	\includegraphics[scale=0.6,clip=true]{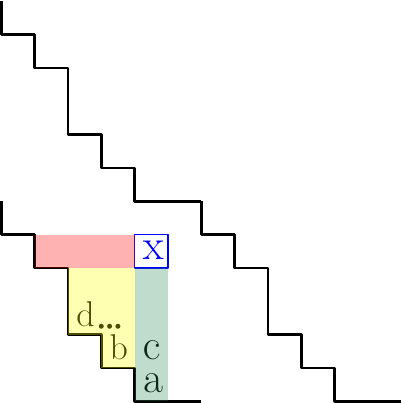}
\end{figure}

We read the word start from the lowest row, row by row upwards, and left to right within each row, as depicted a,b,c,d,... in the example figure above. This satisfies the SW rule. This implies that $\beta_i=\prod_{j=1}^{i-1} r_{Q(j)} (x_{Q(i)} - x_{Q(i)+1})$ has the form: 
\begin{align*}
\beta_i &= \cdots \teal{r_{Q(i)+m}} \cdots \teal{r_{Q(i)+2}} \cdots \teal{r_{Q(i)+1}} \rd{r_{Q(i)-l} \cdots r_{Q(i)-2}  r_{Q(i)-1} }(x_{Q(i)} - x_{Q(i)+1}) \\
&= \cdots \teal{r_{Q(i)+m}} \cdots \teal{r_{Q(i)+2}} \cdots \teal{r_{Q(i)+1}} \rd{r_{Q(i)-l} \cdots r_{Q(i)-2} }(x_{Q(i)-1} - x_{Q(i)+1}) \\
&= \cdots \teal{r_{Q(i)+m}} \cdots \teal{r_{Q(i)+2}} \cdots \teal{r_{Q(i)+1}} (x_{row} - x_{Q(i)+1}) \\
&= \cdots \teal{r_{Q(i)+m}} \cdots \teal{r_{Q(i)+2}} \cdots  (x_{row} - x_{Q(i)+2}) \\
&= x_{row} - x_{col} 
\end{align*}

Note that the red reflections correspond to the row highlighted red in the figure above, and the green reflections to the column highlighted green in the figure above. Given the way the word is read, the red reflections will act first, bringing $x_{Q(i)}$ all the way to $x_{row}$, where $row$ is the label on the vertical line segment to the very left of $x$. For the remaining reflections, only the green ones will cause any change, and they will act by bringing $x_{Q(i)+1}$ all the way to $x_{col}$, where $col$ is the label on the horizontal line segment to the very south of $x$. By the observation above, each diagonal of $s_i$'s only occurs at most once in this region, so there will be no interference between the letters in the region highlighted red and the letters in the region highlighted green.
 
\end{proof}

\begin{Exa}
We continue the example in Gr(4,10) that we introduced in the proof above. Below, we will calculate $\beta_i=\prod_{j=1}^{i-1} r_{Q(j)} (x_{Q(i)} - x_{Q(i)+1})$, showing that it equals $x_{row}-x_{col}$. 
For this example, let us choose $Q(i)$ to be the letter circled in green, where we have highlighted in orange $\prod_{j=1}^{i-1} r_{Q(j)} = r_{Q(1)} r_{Q(2)} r_{Q(3)} \cdots r_{Q(i-1)}$: 

\begin{figure}[htbp] \centering
	\includegraphics[scale=0.5,clip=true]{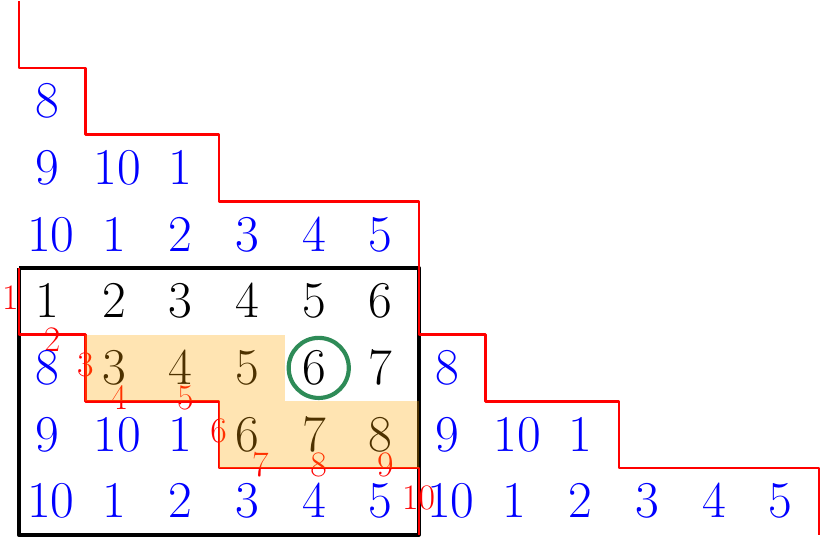}
\end{figure}

We obtain:
\begin{align*}
\beta_i &= r_{Q(1)} r_{Q(2)} r_{Q(3)} \cdots r_{Q(i-1)} (x_{Q(i)} - x_{Q(i)+1}) \\
&= r_{Q(1)} r_{Q(2)} r_{Q(3)} r_{Q(4)} r_{Q(5)} r_{Q(6)} (x_{Q(7)} - x_{Q(7)+1}) \\
&= r_6 r_7 r_8 r_3 r_4 r_5 (x_6 - x_7) \\
&= r_6 r_7 r_8 r_3 r_4  (x_5 - x_7) \\
&= r_6 r_7 r_8 r_3  (x_4 - x_7) \\
&= r_6 r_7 r_8 (x_3 - x_7) \\
&= r_6 r_7  (x_3 - x_7) \\
&= r_6 (x_3 - x_8) \\
&= x_3 - x_8
\end{align*}

and this is precisely $x_{row}-x_{col}$ if we draw lines west and south in the figure, as we wanted to show.

\end{Exa}

We just need to show one more thing. Note that the AJS/Billey formula produces an element of the $n$-dimensional torus $T$-equivariant cohomology $H_T^*(V)\cong \Z[y_1,...,y_n]$. However, the theorem from \cite{Rossmann} involves $H_{\C^\times}^*(V)\cong \Z[h]$. Thus, our final step will be to produce a map $H_T^*(V) \to H_{\C^\times}^*(V)$. We will show that this map takes the rightmost term 
$\sum \prod_{r\in R}  \hat{\beta_r}$ and maps it to $(\text{\# pipe dreams on } U_\lambda)\cdot h \in H_{\C^\times}^*(V)\cong \Z[h]$. Then the proof will be complete.

\subsubsection{Equation $(*)$ Part (4)}

Finally, it remains to be shown, under the map from $H_T^*(U_\lambda)$ to $H_{\C^\times}^*(U_\lambda)$, that the right side of the AJS/Billey formula corresponds to the number of pipe dreams.

Recall that we showed above the isomorphisms $H_T^*(U_\lambda) \cong H_T^*(pt) \cong H_T^*(X_\circ^{v(\lambda)})$. However, to use Rossmann's Theorem \ref{thm:Rossmann}, we need to have $H_T^*(V)$ for a vector space $V$. We will get this via an isomorphism of $U_\lambda$ with the vector space $M_{k,n-k}$, the space of $k$-by-($n-k$) matrices. This is depicted in the following diagram, which shows $M_{k,n-k}$ being identified with another space of matrices with the identity in columns $\lambda$, which we can call $(M_{\lambda})_{k,n-k}$; the row span of $(M_{\lambda})_{k,n-k}$ is equal to $U_\lambda \subseteq Gr(k,n)$. 

\begin{tikzcd}
M_{k,n-k} \ar[r,"\sim"] & \left\{ \mqty(&1&0&\\ *&0&1&*\\&\vdots&\vdots&) \right\} \ar[rr,"\sim"]  \ar[drr,shorten >=3ex,"row span"'] & & U_\lambda \curvearrowleft T \hookleftarrow \C^\times \ar[d,hookrightarrow, shift right=5ex]  \\
\left[ v_1\cdots v_{n-k}\right] \ar[u,"\rin"] \ar[r,mapsto] & \mqty(&&1&0&\\ v_1 & v_2 &0&1& \ar[u,"\rin"] \hdots \\&&\vdots&\vdots&) & & Gr(k,n)
\end{tikzcd}

We also recall that the right side of AJS/Billey is: $\sum_{R\subseteq Q, R\text{ reduced}} \prod_{r\in R}  \hat{\beta_r}$, where $Q$ corresponds to the word representing $v(\lambda)$, and then the sum is over all $R$, which are words representing the siteswap $f$. In other words, there is precisely one summand in this formula for each pipe dream representative of $\Pi_f \cap U_\lambda$. We will show that our choice of map $H_T^*(V) \to H_{\C^\times}^*(V)$ takes each of these summands to $1\cdot h^{codimX} \in H_{\C^\times}^*(V)\cong \Z[h]$, so the result will be $(\text{\# pipe dreams of }\Pi_f \text{ on } U_\lambda)\cdot h^{codimX}$. Then we will be done.

The map $H_T^*(V)$ to $H_{\C^\times}^*(V)$ is entirely determined by the map $\C^\times \to T$. We describe the relevant maps explicitly in a moment, but first we give the general theory. 

Suppose we have a map of tori $S\to T$. Then we get the corresponding Lie algebra map $\mf{s} \to \mf{t}$ as well as the dual map $\mf{s}^* \leftarrow \mf{t}^*$. Then we have the following commutative diagram:

$$\begin{tikzcd}
\mf{t}^* \ar[d,hookrightarrow]  \ar[r] & \mf{s}^* \ar[d,hookrightarrow] \\
Sym(\mf{t}^*) \ar[d,leftrightarrow,"\cong"]\  \ar[r] &  Sym(\mf{s}^*) \ar[d,leftrightarrow,"\cong"]\\
H_T^* \ar[r] & H_S^*
\end{tikzcd}
$$

Note that if we pick a basis for $\mf{t}^*$, then $Sym(\mf{t}^*)$ becomes a polynomial algebra, and these basis elements are degree-2 generators (the cohomology is in even degree). Also note that $H_T^* := H_T^*(pt) = H_T^*(V)$ since a vector space $V$ is contractible. Recall that an application of Theorem \ref{thm:Rossmann} required that $\C^\times$ acts with all weights 1. 

\begin{Lem}
We satisfy the condition of the circle $\C^\times \subseteq T$ acting on $U_\lambda$ with all weights 1 due to our choice of map $\C^\times \to T$, which is: \[z\mapsto (z,1,1,z,...,1)\] with the entries $z$ in the coordinates of $[n]\backslash \lambda$.
\end{Lem}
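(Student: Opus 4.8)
The plan is to unwind how the torus $T=(\C^\times)^n$ acts in the linear coordinates that identify $U_\lambda$ with the vector space $M_{k,n-k}$ (as in the diagram just above this lemma), and then simply substitute the one-parameter subgroup in the statement. Write $\lambda=\{j_1<\cdots<j_k\}$. A point of $U_\lambda$ is the row span of the unique $k\times n$ matrix $M$ whose submatrix on the columns $j_1,\dots,j_k$ is the identity $I_k$; the remaining entries, sitting in the columns indexed by $[n]\backslash\lambda$, are the free coordinates $M_{a,i}$ (with $1\le a\le k$ and $i\in[n]\backslash\lambda$) giving the isomorphism with $M_{k,n-k}$.

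Next I would compute the $T$-action in these coordinates. An element $t=(t_1,\dots,t_n)\in T$ scales the $i$-th column of $M$ by $t_i$; the resulting matrix no longer has $I_k$ on the columns of $\lambda$ but $\operatorname{diag}(t_{j_1},\dots,t_{j_k})$ there, so to land back in $U_\lambda$ one applies on the left the unique normalizing element $\operatorname{diag}(t_{j_1},\dots,t_{j_k})^{-1}\in GL_k$. The net effect is that the coordinate $M_{a,i}$ is sent to $t_i\,t_{j_a}^{-1}M_{a,i}$. Hence $T$ acts linearly on $M_{k,n-k}\cong U_\lambda$ with the coordinate $M_{a,i}$ carrying the weight $e_i-e_{j_a}$, where $e_1,\dots,e_n$ is the standard character basis of $T$; note $i\in[n]\backslash\lambda$ indexes a horizontal (column) boundary segment and $j_a\in\lambda$ a vertical (row) one, consistent with the row/column labelling in Proposition \ref{prop:AJSrowcolform}.

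Finally, restrict along the stated map $\C^\times\to T$, $z\mapsto(z_1,\dots,z_n)$ with $z_i=z$ for $i\in[n]\backslash\lambda$ and $z_i=1$ for $i\in\lambda$. Every coordinate $M_{a,i}$ of $U_\lambda$ has $i\in[n]\backslash\lambda$ and $j_a\in\lambda$, so the circle scales it by $z_i z_{j_a}^{-1}=z\cdot 1=z$. Thus $\C^\times$ acts on $U_\lambda$ with all weights equal to $1$, which is precisely the hypothesis needed to invoke Rossmann's Theorem \ref{thm:Rossmann}. The only point requiring care is the bookkeeping of the $GL_k$ renormalization and of which rows of $M$ are pinned by which columns of $\lambda$ (so that the free columns are exactly $[n]\backslash\lambda$); once that convention is fixed the weight computation is immediate, so I do not anticipate a substantive obstacle here — it is essentially a sanity check that the chosen circle is compatible with Theorem \ref{thm:Rossmann}.
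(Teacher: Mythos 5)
Your proposal is correct and follows essentially the same route as the paper: you compute the $T$-action on the coordinates of $U_\lambda$ by scaling columns and renormalizing by the $GL_k$ factor, obtaining the weight $e_i-e_{j_a}$ (the paper's $\tfrac{z_{col}}{z_{row}}$), and then observe that the stated circle sends this to $z\cdot 1^{-1}=z$. The only difference is that you carry out the weight computation in full generality where the paper illustrates it with a $Gr(2,4)$ example before specializing; the substance is identical.
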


\begin{Exa}
Suppose we are in $Gr(2,4)$ with $\lambda=\{1,3\}$. Then we can represent $U_\lambda$, and the action of $\C^\times$ on the right as:
\[\mqty(1 & * & 0 & * \\ 0 & * & 1 & *) \mqty(\dmat{1,z,1,z}).\]
This scales all the $*$'s, which give coordinates of $U_\lambda \cong \mathbb{A}^{k(n-k)}=\mathbb{A}^4$, by $z^1$. Thus, $\C^\times$ acts via dilation on $U_\lambda$, as required.

Alternatively, we can consider the full action of $T$ on $U_\lambda$ and find the action of $\C^\times$ inside $T$ using:
\[\mqty(\dmat{z_1^{-1},z_3^{-1}}) \mqty(1 & * & 0 & * \\ 0 & * & 1 & *) \mqty(\dmat{z_1,z_2,z_3,z_4}) \] \[ = \mqty(\dmat{z_1^{-1},z_3^{-1}}) \mqty(z_1 & z_2* & 0 & z_4* \\ 0 & z_2* & z_3 & z_4*) = \mqty(1 & \frac{z_2}{z_1}* & 0 & \frac{z_4}{z_1}* \\ 0 & \frac{z_2}{z_3}* & 1 & \frac{z_4}{z_3}*).\]

We see that in order to maintain our matrix representative in the form $\mqty(1 & * & 0 & * \\ 0 & * & 1 & *)$ we must ``unscale" on the left, which results in the coordinates having factors $\frac{z_{col}}{z_{row}}$, where the number $row$ refers to: within this number's row, find where the $1$ is, and record the column of this $1$.
\end{Exa}

\begin{proof}
Note that for a general embedding of $\C^\times $ in $T$ acting with arbitrary weight, we have $z_i=z^{y_i}$. Then $\frac{z_{col}}{z_{row}}= z^{y_{col}-y_{row}}$. Since we want the action to be by dilation, we need all these $y_{col}-y_{row}=1$. We just saw that we could do this via the map $z\mapsto diag(z,1,1,z,...,1)=diag(z^1,z^0,z^0,z^1,...,z^0)$ with the $z$'s in the non-$\lambda$ spots. In other words, by setting \\
\[y_i= \begin{cases} 0 & i\in \lambda \\ 1 & i\not\in \lambda \end{cases},\]
then $y_{col}-y_{row}=1-0=1$ since the $col$ corresponds to the columns not in $\lambda$ and the $row$ correspond to the unit vectors in the matrix which are where $\lambda$ is. 
\end{proof}

If the $T$-action on the tangent spaces to the fixed points on a flag manifold $G/P$ contains dilation, $G/P$ is called cominuscule. We have just recovered the fact that Grassmannians are cominuscule (something not true for full flag manifolds).

Let's now write the remaining maps explicitly.
$\C^\times \to T, z\mapsto diag(1,z,z,1,....,1,z)$ where the $z$'s exist precisely in the places in the diagonal where $\lambda$ is not gives us:
$\mf{s} \to \mf{t}, 1 \mapsto [1,0,0,1,...,0]$ the map of Lie algebras, where we have just taken the derivative of the map above and extracted the diagonal.

\[ \mf{s}^* \leftarrow \mf{t}^* : \mqty(1\\0\\0\\1\\ \vdots \\ 0)\]

Finally, once we go to $Sym(\mf{t}^*) \to Sym(\mf{s}^*)$ this is just a map of polynomial rings, where this map $\mqty(1 & 0 & 0 & 1& \cdots & 0)^T$ sends all variables $y_i$ in $Sym(\mf{t}^*)$ corresponding to the numbers in $\lambda$ to $0$, and all other variables to $h$. 

Recall Proposition \ref{prop:AJSrowcolform}, which noted that 
\[ [\Pi_f \cap U_\lambda]_T = \sum_{\substack{R\subseteq v(\lambda) \\ \prod R=v(f) \\ R \text{ reduced}}} \prod_{r\in R}  [(y_i)_r - (y_j)_r] \] where $i$ is the column and $j$ is the row of a cross in a pipe dream.

Now, we can apply to this our choice of map $\tilde{\lambda}:\C^\times \to T$ sets all $y_{row}=0, y_{col}=h\in \Z[h]$, so since each summand in the AJS/Billey formula is a product of $codim(\Pi_f)$ many $\beta$'s, each summand will be $1\cdot h^{codim \Pi_f}$. As we already showed above, there is a single summand for each pipe dream representative of $\Pi_f$ on $U_\lambda$, so the AJS/Billey formula is equal to $(\text{\# pipe dreams of }\Pi_f \text{ on } U_\lambda)\cdot h^{codim \Pi_f}$. This completes the proof of the uniqueness (smoothness) part of Theorem \ref{thm:smoothone}.

Finally for the proof of the existence part, that $\lambda \in \Pi_f$ is equivalent to there existing at least one affine pipe dream, this follows from the fact that each affine pipe dream for $\Pi_f$ on $U_\lambda$ is a subword for $\Pi_f$ in the word for $\lambda$ in accordance with what we have already described in Section \ref{section:affinepipe}. In addition, it is a general fact of words in Bruhat order that a point $\lambda$ being contained in a Schubert variety defined by $f$ is equivalent to the word $v(\lambda) > v(f)$ in Bruhat order, and furthermore, this is equivalent to $v(f)$ being a subword in a reduced for for $v(\lambda)$.

\subsection{The Ordering and Smoothness}

We presented the following figure in the introduction. In the ordering (given by deletion and contraction) of positroid varieties and points, the smooth versus singular pairs can be divided in the following way:

 \begin{figure}[htbp]
	\includegraphics[scale=0.5,clip=true]{ordering.pdf}
\end{figure}

In other words, smooth positroid varieties remain smooth under deletion and contraction, while singular positroid varieties if obtained from another positroid variety via deletion or contraction must have come from a singular positroid variety. We can state this in the following proposition:

\begin{Prop}\label{prop:delsmooth}
If $\lambda$ is smooth on $\Pi_f$, then $del_i(\lambda)$ is smooth on $\Pi_{del_i(f)}$ and $contr_i(\lambda)$ is smooth on $\Pi_{contr_i(f)}$. Or this can be stated contrapositively: if $del_i(\lambda)$ is singular on $\Pi_{del_i(f)}$ or $contr_i(\lambda)$ is singular on $\Pi_{contr_i(f)}$, then $\lambda$ is singular on $\Pi_f$.
\end{Prop}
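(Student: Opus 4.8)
The plan is to reduce the statement to a count of affine pipe dreams, applying Theorem \ref{thm:smoothone} at both ends and inserting Proposition \ref{prop:delpipedream} in between. I will treat the deletion statement explicitly; the contraction statement follows either by repeating the argument with ``column'' replaced by ``row'' (and $\Pi_{del,i}$ replaced by $\Pi_{contr,i}\cong Gr(k-1,n-1)$), or from the deletion statement via Grassmannian duality $Gr(k,n)\sim Gr(n-k,n)$, which interchanges deletion and contraction and reflects pipe dream shapes across $x=-y$. The contrapositive reformulation is then immediate, so there is nothing extra to prove there.

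Fix a column $i$ not used by $\lambda$, so that $\lambda\in\Pi_{del,i}$, and assume $\lambda$ is smooth on $\Pi_f$. By Theorem \ref{thm:smoothone}, $\Pi_f$ has exactly one affine pipe dream $D$ on $U_\lambda$ (in particular it meets $\lambda$). Consider the projectionless deletion $Del_i(\Pi_f)=\Pi_f\cap\Pi_{del,i}$, which by Propositions \ref{prop:delirred} and \ref{prop:del2def} is a single positroid variety $\Pi_g$ of $Gr(k,n)$ with $g(i)=0$. By Proposition \ref{prop:delpipedream}, the pipe dreams of $\Pi_g$ on $U_\lambda$ are obtained from the pipe dreams of $\Pi_f$ on $U_\lambda$ that carry the maximal number of crosses in column $i$ by then saturating column $i$ with crosses. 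Since $D$ is the only pipe dream of $\Pi_f$ on $U_\lambda$, it is trivially the unique one maximizing crosses in column $i$, so $\Pi_g$ has exactly one pipe dream on $U_\lambda$ (got from $D$ by filling column $i$); in particular $\Pi_g$ meets $\lambda$, so we are not in the ``empty'' case. By Theorem \ref{thm:smoothone} again, $\Pi_g$ is smooth at $\lambda$.

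It remains to descend this to $Gr(k,n-1)$. The projection erasing column $i$ restricts to an isomorphism $\Pi_{del,i}\cong Gr(k,n-1)$, carrying $\Pi_g=Del_i(\Pi_f)$ isomorphically onto $\Pi_{del_i(f)}\subseteq Gr(k,n-1)$ and $\lambda$ to $del_i(\lambda)$; concretely, the all-crosses column $i$ in the pipe dream of $\Pi_g$ on $U_\lambda$ is exactly the column removed by the projection, so that pipe dream descends verbatim to the unique pipe dream of $\Pi_{del_i(f)}$ on $U_{del_i(\lambda)}$. Smoothness being intrinsic (isomorphism-invariant), smoothness of $\Pi_g$ at $\lambda$ yields smoothness of $\Pi_{del_i(f)}$ at $del_i(\lambda)$.

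I expect the only delicate point — the main obstacle — to be exactly this last bit of shape bookkeeping: Theorem \ref{thm:smoothone} and Proposition \ref{prop:delpipedream} are both phrased on the large patch $U_\lambda$ of $Gr(k,n)$, so one must check that ``unique pipe dream for $Del_i(\Pi_f)$ on $U_\lambda$'' genuinely corresponds to ``unique pipe dream for $\Pi_{del_i(f)}$ on $U_{del_i(\lambda)}$'' under the projection, i.e. that erasing a column of all crosses neither creates nor destroys pipe dreams — which is what makes the count pass through. I would also record a cleaner, pipe-dream-free alternative: $\Pi_{del,i}$ and $\Pi_{contr,i}$ are the two connected components of the fixed locus $Gr(k,n)^{S_i}$ of the subcircle $S_i\subseteq T$, so $Del_i(\Pi_f)\sqcup Contr_i(\Pi_f)=(\Pi_f)^{S_i}$; since the fixed locus of a torus acting on a variety is smooth wherever the ambient variety is smooth, smoothness of $\Pi_f$ at $\lambda$ forces $\lambda$ to be a smooth point of $(\Pi_f)^{S_i}$, hence of whichever of $Del_i(\Pi_f)$, $Contr_i(\Pi_f)$ contains $\lambda$, and one then descends along the projection as above.
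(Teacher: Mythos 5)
Your closing ``alternative'' is in fact the paper's own (first) proof: positroid varieties are $T$-invariant, so $\Pi_f^{S_i}=(\Pi_f\cap\Pi_{del,i})\sqcup(\Pi_f\cap\Pi_{contr,i})$, and the fixed locus of a reductive group acting on a variety is smooth wherever the variety is; that argument is complete (the descent along the projection $\Pi_{del,i}\cong Gr(k,n-1)$ is an isomorphism, so it is harmless). If you led with that, there would be nothing to add.

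Your primary pipe-dream argument, however, has a gap at exactly the point where the real work lives. You read Proposition \ref{prop:delpipedream} as saying that \emph{every} pipe dream of $Del_i(\Pi_f)$ on $U_\lambda$ arises by taking a max-cross pipe dream of $\Pi_f$ and saturating column $i$, and you conclude that one pipe dream in gives one pipe dream out. But what that proposition (and its proof) actually delivers is a way to \emph{produce} a pipe dream for $del_i(f)$ --- it identifies the variety and exhibits the filled-in diagram as a representative; it does not show that the fill-in map is surjective onto $PD_{(\Pi_{del_i(f)},\lambda)}$. A priori, saturating column $i$ could create a new cross/near-miss pair, and then by Proposition \ref{prop:BAPmoves} the deletion would have several pipe dreams, only one of which is of the form ``(pipe dream for $f$) $+$ (filled column).'' Ruling this out --- that filling the column of a rigid pipe dream yields a rigid pipe dream --- is precisely the content of the paper's combinatorial proof of this proposition (the detailed colored-pipes argument showing no new moves appear), and it is also why the paper bothers with a third proof via the Main Theorem (NW/SE-partition shapes are stable under filling in a full row or column). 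As written, your main argument assumes the conclusion it is meant to establish; either supply the ``no new moves'' argument, or promote the fixed-locus argument from an aside to the proof.
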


\begin{proof}[Geometric proof]

By e.g. \cite{BilBra03}, if the group $G$ is reductive, then whenever $G$ acts on a regular set $X$, the fixed point set $X^G$ is also regular. In our case, $G=T$ is the torus, so the conclusion follows. 

In particular, if $\Pi_f$ is smooth at $p$, then $\Pi_f^{\C^\times}$ is smooth at $p$, and $\Pi_f^{\C^\times}=(\Pi_f\cap \Pi_{del,i}) \cup (\Pi_f\cap \Pi_{contr,i}) = Del_i(\Pi_f) \cup Contr_i(\Pi_f)$. 
\end{proof}

We can give a combinatorial proof of this proposition using the machinery of affine pipe dreams:

\begin{proof}[Combinatorial proof]

Without loss of generality, let us do deletion on the $i$th column in a rigid pipe dream (the case of contraction will be analogous but for a row). 

The column will alternate between consecutive sets of crosses and consecutive sets of elbows. It will be helpful for the proof to look at the following figure which zooms in on the $i$th column, where the consecutive sets, for simplicity, have been drawn as groups of 3:

 \begin{figure}[htbp] \centering
	\includegraphics[scale=0.7,clip=true]{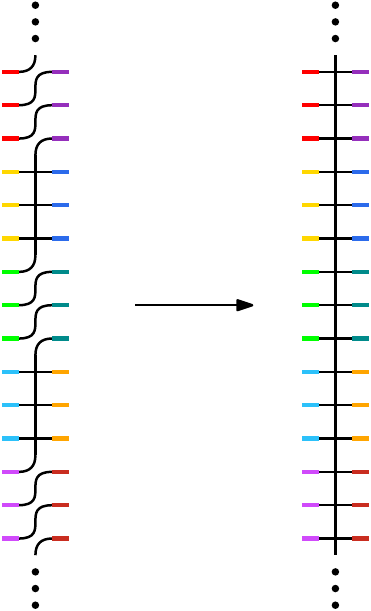}
\end{figure}

Note that the colors on the two pipe dreams in the figure are the same; for each one, reading from top to bottom, the left side (of column $i$) goes: light red - light orange - light green - light blue - light purple; and on the right side: dark purple - dark blue - dark green - dark orange - dark red. The fact that we have flipped the order of the coloring (while doing dark instead of light to distinguish them) is intentional: the pipe dream has a northwest-southeast symmetry, coming from the fact that the pipes (and the moves) go southwest to northeast. Thus, it is sufficient to make arguments on the just left side (light) colors, so for ease of reference, we call the left/light colors just: red, orange, green, blue, purple; whereas if we need to distinguish the colors on the right side, we will call them: dark purple, dark blue, cyan (or dark green), dark orange, dark red. The green to cyan pipes in the middle are where we will focus. After all, only the elbow tiles are changed in the deletion. 

Therefore, what we are doing here is picking some random block of consecutive elbow tiles in the column, and showing that, even though those tiles are changed (turned into crosses), no new moves can be created. If we can show this for the green to cyan tiles in this diagram, then this applies to any other elbow tile in this column that gets deleted, and the proof will be complete.

If we need to further distinguish the pipes within each block, we can call them, for example: red 1, red 2, red 3, going from top to bottom so that red 1 is the highest. 

We first make several observations. 

(1) There can be no crossing among any of the green pipes anywhere because they are elbows/near-misses along the $i$th column; any crossing would then provide a move (specifically, the cross could then be moved to the $i$th column), contradicting rigidity. For the same reason, there can be no crossing among the cyan pipes. (1a) Note, however, that the orange pipes have no such restriction among themselves.

(2) The green pipes cannot cross any of the orange pipes. The reason is that, by (1), in order for any of the green pipes to cross any of the orange pipes, green 1 would have to cross (first). However, since green 1 crosses all of the orange pipes along column $i$, this would create a double crossing. 

(3) The green pipes cannot cross any of the red pipes. The reason is that, by (1), in order for any of the green pipes to cross any of the red pipes, green 1 would have to cross red 3 (first). However, since green 1 and red 3 have a near miss, this would create a move, contradicting rigidity. The same argument show that the purple pipes cannot cross any of the green pipes. 

(4) The green pipes \textit{can} cross the blue pipes. 

(5) By symmetry, all these points are valid for the right side of column $i$ as well: just replace ``color" by ``dark color."

We can recap all of this succinctly. The only crossings among pipes that are allowed are: (I) \textit{within} each of block pipes coming from consecutive crosses along the $i$th column, and (II) between a block of consecutive elbows and the block of consecutive crosses directly below them (on the left side of column $i$; or, on the right side, by symmetry, between consecutive elbows and consecutive crosses directly above)

We now show that doing the deletion does not create any new moves. 

The deletion only changes the connectivity of the elbow tiles. In other words, the orange and blue pipes do not change, so there cannot be any new moves coming from the orange or blue pipes. Let us focus in on the green pipes, and ask the question of whether there can be any new moves coming from the change in the connectivity of the green pipes.

A move consists of 2 pipes having a cross and a near-miss. Obviously, many new crosses are created along column $i$; however, these crosses cannot participate in any move because the $i$th column pipe runs straight up and down vertically, crossing every pipe along the $i$th column, and thus has no near-misses. Therefore, a new move must come from a crossing of 2 pipes on the left side of column $i$, for which these 2 same pipes have a near-miss on the right side of column $i$ (or vice-versa switching left and right, but it is sufficient to just consider the former case). Since this is a new move, it did not exist before.  The cross and near-miss cannot have both been on the same side (left or right); otherwise, this move would have existed prior to the deletion and thus not only not be a new move, but the original pipe dream would have had a move, contradicting rigidity.

Thus, the question now becomes, (a) what possible crosses involving the green pipes on the left are there, and (b) what near-misses involving these pipes are there on the right. The work we did above now comes in handy. We know that the green pipes cannot cross among themselves by (1), and in fact cannot cross any pipes except the blue ones by (II). This answers (a). Now we consider the question of (b). Since the green pipes can only potentially cross the blue pipes, the question is whether it is possible for a potential green-blue crossing on the left, after we do the deletion which causes these pipes to turn into dark green - dark orange pipes on the right, to involve a dark green - dark orange near-miss on the right.

This is not possible for the following reason. Pipe dark green 3 crosses all three of the dark orange pipes. Since the pipe dream is rigid, that means that dark green 3 cannot have a near miss with any of the dark orange pipes. By (1) above, none of the dark green pipes can cross each other, which mean that none of the dark green pipes can have a near miss with any of the dark orange pipes. This completes the proof.

\end{proof}

However, our work above allows us to say something further about this last point. In order for dark green 3 to not have a near miss with any of the dark orange pipes, we must have another color pipe come between the dark green and dark orange pipes. We saw above in (4) or (I) that, on the left, the elbow pipes could cross with the cross pipes in the block below, which, on the right, corresponds to elbow pipes crossing with the cross pipes above. This means that one, or any (since the dark blue can cross among themselves by (1a)), of the dark blue pipes can come down by crossing the dark green pipes to create separation from the dark orange pipes; or, dark red pipe 1 would have to come up (the other dark red could also come up, but dark red pipe 1 would have to come up first by (1)).

Thus, if there were no moves prior to the deletion, then there are no moves after the deletion. The contrapositive statement is that a singular positroid variety $\Pi_{f'}$ coming from another positroid variety $\Pi_f$ via deletion or contraction implies that $\Pi_f$ is itself singular, since a smooth positroid variety being deleted to a singular positroid variety is impossible.

Finally, we note here that we can give another name to going down in this ordering, a concept which can also be applied to (affine) pipe dreams.

\begin{Def}\label{def:dcreduce}
(1) We say that $(\Pi_{f_1},\lambda_1)$ \textbf{d/c-reduces} (for deletion/contraction) to $(\Pi_{f_2},\lambda_2)$ if a series of deletions and contractions of $(\Pi_{f_1},\lambda_1)$ results in $(\Pi_{f_2},\lambda_2)$.

(2) The notion of ``d/c-reduces" in particular applies to (affine) pipe dreams: Let's denote the set of affine pipe dreams corresponding to $(\Pi_{f_1},\lambda_1)$ by: $PD_{(\Pi_f,\lambda)}$. This becomes a bijection if we consider the elements in $(\Pi_{f_1},\lambda_1)$ up to equivalence by the ``moves" defined in Definition \ref{def:moves}. Furthermore, we have a notion of deletion and contraction for individual pipe dreams, that is, applying deletion and contraction to elements $\delta \in PD_{(\Pi_f,\lambda)}$ (see the end of Xection \ref{section:affinepipe}).

Thus, we can consider, more specifically, whether a given $\delta_1 \in PD_{(\Pi_{f_1},\lambda_1)}$ goes to $\delta_2 \in PD_{(\Pi_{f_2},\lambda_2)}$ under a series of deletions and contractions on affine pipe dreams described in Section \ref{section:affinepipe}. Obviously, if we consider these affine pipe dreams only up to equivalence under moves, then the relations will be the same as (1) above. However, since it will be easier to work with individual $\delta \in PD_{(\Pi_f,\lambda)}$ in many of the following proofs, in the rest of this paper when we use the term ``d/c-reduces," we will use it to refer to this latter notion (2). 
\end{Def}

\section{Proof of Main Theorem}

Recall the following figure from Section \ref{sub:theresult}, which displays an example of maximal rectangles. Maximal rectangles are defined simply as rectangles within the pipe dream whose southwest and northeast are also corners of the pipe dream shape.

\begin{figure}[htbp]
	\includegraphics[scale=0.5,clip=true]{maxrect.pdf}
\end{figure}

\begin{Def}
A pipe dream in a rectangular shape \textbf{reduces to SE/NW partitions} if, after deleting all entire rows and columns of crosses, the result is a partition in the southeast and northwest corners. 

This is equivalent to the condition that all crosses must be contained in: (1) entire rows of crosses, (2) entire columns of crosses, (3) a partition in the northwest, (4) a partition in the southeast.
\end{Def}

\begin{Exa}\label{exa:SENWpart}
The following is an example of a rectangular pipe dream that reduces to SE/NW partitions:
 \begin{figure}[htbp]
	\includegraphics[scale=0.5,clip=true]{patternavoid.pdf}
\end{figure}
\end{Exa}

\begin{Thm}\label{thm:mainthm} [Main Theorem]
Let $\delta \in PD_{(\Pi_f,\lambda)}$ be an affine pipe dream for $\Pi_f \cap U_\lambda$. If each maximal rectangle in $\delta$ reduces to NW/SE partitions, then $\Pi_f$ is smooth at $\lambda$; otherwise, it is singular. 
\end{Thm}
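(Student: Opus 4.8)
The plan is to turn the statement into a purely combinatorial claim. By Theorem \ref{thm:smoothone}, $\Pi_f$ is smooth at $\lambda$ if and only if the affine pipe dream for $f$ on $U_\lambda$ is unique, and by Proposition \ref{prop:BAPmoves} (together with Definition \ref{def:moves}) uniqueness is equivalent to $\delta$ being \emph{rigid}, i.e.\ admitting no cross--elbow move. Since $\delta$ is periodic, there are only finitely many maximal rectangles, so it suffices to prove: $\delta$ is rigid if and only if every maximal rectangle of $\delta$ reduces to NW/SE partitions. Everything after this point is about pipe dreams only.

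\textbf{Localizing moves to maximal rectangles.} The next step is to show that rigidity of $\delta$ can be tested one maximal rectangle at a time. Concretely, I would argue that if $\delta$ admits a move then it admits one whose cross and elbow (and the small rectangular region between them that must have a prescribed filling for connectivity to be preserved) all lie inside a single maximal rectangle; here I would use the band structure of the strip recorded in the proof of Proposition \ref{prop:AJSrowcolform} — every row of the shape has width $k$ and every column has height $n-k$ — together with the definition of a maximal rectangle, and the fact that the maximal rectangles cover the whole strip. Conversely, a move of the filling of a maximal rectangle $R$, viewed as an ordinary (non-affine) pipe dream, is automatically a move of $\delta$ because it is supported in $R$ and $\delta$ restricts to that filling on $R$. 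Hence $\delta$ is rigid if and only if the restriction of $\delta$ to each maximal rectangle is a rigid rectangular pipe dream.

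\textbf{The rectangular case.} It then remains to prove: a pipe dream in a rectangular shape is rigid if and only if it reduces to NW/SE partitions. For the ``if'' direction, assume every cross lies in an entire row of crosses, an entire column of crosses, a staircase partition in the NW corner, or a staircase partition in the SE corner, and check directly that no move exists: a cross in a full row (resp.\ column) has no elbow in its row (resp.\ column) to exchange with, and an exchange across a neighbouring line is blocked by the fullness; a staircase partition of crosses in a corner is precisely the rigid (``dominant'') pipe dream of its permutation; and a move linking the NW region to the SE region is impossible because a move requires a pair of pipes having \emph{both} a crossing and a near--miss, which opposite-corner partitions plus full rows/columns never create. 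For the ``only if'' direction I would prove the contrapositive: if, after deleting all full rows and full columns of crosses, the remaining crosses do not form a NW partition together with a SE partition, pick an offending cross in an extremal position and exhibit an explicit cross--elbow (chute- or ladder-type) move using it. Combining this rectangular dichotomy with the localization step of the previous paragraph yields the theorem.

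\textbf{Main obstacle.} I expect the ``only if'' half of the rectangular case to be the crux: converting ``the reduced rectangle is not two partitions'' into an explicit move requires a case analysis on the location of an offending cross relative to the already-removed full rows and columns and to the two partition shapes. The secondary technical point is the localization claim — that a move never genuinely straddles two maximal rectangles — which I would settle by exploiting the northwest--southeast symmetry of the strip and the same ``which pipes can cross'' bookkeeping used in the combinatorial proof of Proposition \ref{prop:delsmooth}.
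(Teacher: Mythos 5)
Your proposal is correct and follows essentially the same route as the paper: reduce smoothness to rigidity via Theorem \ref{thm:smoothone} and Proposition \ref{prop:BAPmoves}, localize all moves to maximal rectangles (the paper's Proposition \ref{prop:insidemaxrect}, proved exactly by the convexity/band-structure observation you cite), and characterize rigid rectangular pipe dreams as those reducing to NW/SE partitions (the paper's Proposition \ref{prop:Le} plus Theorem \ref{thm:NW/SE}). The only difference is organizational — the paper routes the rectangular dichotomy through an intermediate ``double-Le'' condition and a direct constructive case analysis rather than your contrapositive move-exhibition, but the combinatorial content is the same.
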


\begin{proof}

This theorem consists of showing two parts:\\ 
(1) Showing that smoothness can be determined just by looking at the maximal rectangles in an affine pipe dream. This is proven in Proposition \ref{prop:insidemaxrect} below. \\
(2) Showing that the thing that needs to be looked at on each maximal rectangle is that each maximal rectangle is a NW/SE partition. This is proven in Theorem \ref{thm:NW/SE} below. 

However, first we give the full proof, referencing both Proposition \ref{prop:insidemaxrect} and Theorem \ref{thm:NW/SE} as needed. 
\\

We want to show that smoothness of $(\Pi_f,\lambda)$ is equivalent to $\delta$ having no moves \textit{in any maximal rectangle}. But we already have Theorem \ref{thm:smoothone}, which says that smoothness of $(\Pi_f,\lambda)$ is equivalent to its pipe dream having no moves (anywhere), and, by Proposition \ref{prop:insidemaxrect}, all moves exist within maximal rectangles. Thus, if all maximal rectangles have no moves, then the pipe dream for $(\Pi_f,\lambda)$ has no moves. The other direction, that the pipe dream for $(\Pi_f,\lambda)$ having no moves implies that all its maximal rectangles have no moves, is automatic. 

Therefore, smoothness of $(\Pi_f,\lambda)$ is equivalent to $\delta$ having no moves in any maximal rectangle.
Thus, we just have to show that rigid rectangles reduce to NW/SE partitions and the proof will be complete, and this is shown in Theorem \ref{thm:NW/SE}.
\end{proof}

\begin{Prop}\label{prop:insidemaxrect}
Every cross-elbow move exists within some maximal rectangle. More precisely, every pair consisting of a cross and a near-miss where this cross can move is contained within a maximal rectangle. 

Trivially, this implies that the portions of the two pipes involved in the move, starting from the cross and running all the way to the near-miss, are entirely contained in this maximal rectangle.
\end{Prop}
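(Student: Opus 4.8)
The plan is to reduce everything to one geometric fact: the affine pipe dream shape is a strip bounded by two monotone staircases, one running ``above'' it and one ``below'' it (both going from the northwest to the southeast). First I would recall, using Definition~\ref{def:moves} and the alternative subword-complex description of moves (pass to a ridge, then to another facet), that a cross-elbow move is controlled by exactly two pipes $p$ and $q$: the cross being moved sits at the unique tile $a$ where $p$ and $q$ cross, and the destination elbow sits at a tile $b$ where $p$ and $q$ have a near-miss. Both $a$ and $b$ are tiles of the pipe dream, hence lie in the strip. Since every pipe enters a tile from its north or east edge and leaves by the west or south, along any pipe the row index is nondecreasing and the column index is nonincreasing; in particular the two points $a,b$, which both lie on the common monotone path $p$, satisfy that one of them is weakly to the southwest of the other. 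Thus $a$ and $b$ are opposite corners --- the northeast and the southwest corners --- of the axis-parallel rectangle $\rho$ they span, and the portions of $p$ and of $q$ running between $a$ and $b$, being monotone, stay inside $\rho$. Hence the entire move (its two tiles, the two connecting pipe segments, and the region they enclose) is contained in $\rho$.

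It then remains to show that $\rho$ lies inside a maximal rectangle. I would first show $\rho\subseteq$ strip. Because the strip is bounded by monotone staircases, its complement in the plane is a disjoint union $N\sqcup S$, where $N$ (the part lying above the strip) is closed under moving up or moving right, and $S$ (the part below) is closed under moving down or moving left. If some tile of $\rho$ lay in $N$, sliding it up and to the right within $\rho$ would put the northeast corner of $\rho$, namely $a$, into $N$ --- contradicting $a\in$ strip; symmetrically a tile of $\rho$ in $S$ would force the southwest corner $b$ into $S$. So $\rho\subseteq$ strip. Being an axis-parallel rectangle contained in the strip, $\rho$ can be enlarged within the strip until its southwest and northeast corners are corners of the shape, i.e.\ until it becomes a maximal rectangle; this maximal rectangle contains $\rho$, and therefore contains the whole move. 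The final ``trivially'' sentence of the statement is then immediate, since the two relevant pipe segments already lie in $\rho$.

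The only genuinely delicate point is the input from the combinatorics of moves: I need to be certain that a move is governed by a \emph{single} pair of pipes $(p,q)$ meeting at one cross and one near-miss, with no further hypothesis on their relative position. This is exactly where I would lean on the material of Section~\ref{section:affinepipe}, especially Proposition~\ref{prop:BAPmoves} and the subword-complex reformulation: passing to a ridge adds one cross and creates exactly one repeated pipe-pair, and passing back to a facet removes the other crossing of that same pair. Once that is in hand, everything else is the elementary monotonicity bookkeeping above, so I anticipate no further obstacle; the argument also makes transparent the ``more precisely'' refinement in the statement, since it produces $\rho$ as an explicit rectangle containing the cross, the near-miss, and the two pipe arcs joining them.
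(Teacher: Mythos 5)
Your proposal is correct and follows essentially the same route as the paper: the paper's (much terser) proof of Proposition~\ref{prop:insidemaxrect} likewise rests on the two facts that the cross and its near-miss are mutually northeast/southwest of one another and that any such pair, in these staircase strips whose only non-convexities occur at the northeast and southwest, must sit inside a maximal rectangle. Your write-up simply supplies the pipe-monotonicity and boundary-decomposition details that the paper leaves to a figure.
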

In fact, the proof below shows the stronger statement that if we consider a cross and the set of all places where it can move, these will all be contained in a maximal rectangle.  
\begin{proof}
The very notion itself of nonrigidity is that there is a cross to the southwest and a near-miss of the pipes in that cross somewhere to the northeast (or vice-versa with southwest and northeast switched). In the sorts of shapes given by affine pipe dreams we are considering, such a cross and then near-miss combination must exist within a full rectangle because all the jagged (or convex) parts of the shape are in the northeast and southwest.

This is depicted in the red rectangle in the following figure: 

\centerline{
	\includegraphics[scale=0.5,clip=true]{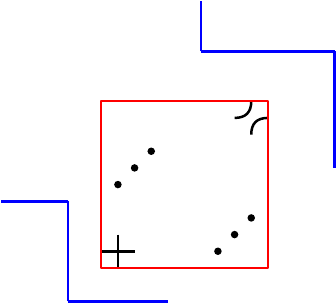}
}
\end{proof}

Next, in order to prove Theorem \ref{thm:NW/SE}, we will first need the following proposition, which itself requires a lemma:

\begin{Lem}\label{lem:fillinrect}
In a reduced pipe dream, any time there is an elbow followed by $m$ consecutive crosses to its east and $n$ to its south,  then the $n\times m$ box formed by having the easternmost and southernmost crosses as corners must be entirely filled in with crosses. The same is true with ``east and south" replaced by ``west and north." That is: 

 \begin{figure}[htbp] \centering
	\includegraphics[scale=0.5,clip=true]{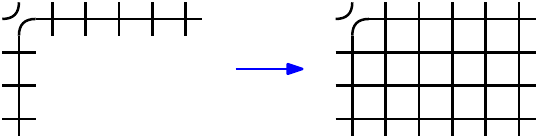}
\end{figure}
\end{Lem}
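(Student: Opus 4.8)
Here is how I would prove the lemma.

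\textbf{Setup.} The plan is to argue directly from the reducedness condition (no two pipes cross twice). Place the elbow at $(0,0)$, with rows increasing to the south and columns to the east, so the given crosses are $(0,1),\dots,(0,m)$ and $(1,0),\dots,(n,0)$, and the claim is that every tile $(i,j)$ with $1\le i\le n$ and $1\le j\le m$ is a cross. Recall the convention that at an elbow one arc connects the north and west edges and the other connects the east and south edges; so a pipe entering an elbow from the north leaves to the west, and a pipe entering from the east leaves to the south. Let $A$ be the pipe occupying the east--south arc of the elbow at $(0,0)$. Because $(0,1),\dots,(0,m)$ are all crosses, $A$ runs straight west along row $0$ through each of them; because $(1,0),\dots,(n,0)$ are all crosses, after turning at the elbow $A$ runs straight south down column $0$ through each of them. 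In particular, $A$ is the horizontal strand of the cross $(0,j)$ for every $j\in\{1,\dots,m\}$ and the vertical strand of the cross $(i,0)$ for every $i\in\{1,\dots,n\}$.

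\textbf{Main induction.} I would then show $(i,j)$ is a cross for all $1\le i\le n$, $1\le j\le m$ by induction on $i+j$. Fix such $(i,j)$ and assume every tile $(i',j')$ in the block with $i'+j'<i+j$ is a cross. Then column $j$, rows $0$ through $i-1$, is entirely crosses (row $0$ by the lemma's hypothesis; rows $1,\dots,i-1$ by the inductive hypothesis, since each has index sum $<i+j$), so the vertical strand through $(0,j)$ continues straight down and enters $(i,j)$ from the north; call this pipe $V$ (note $V\neq A$, since $A$ is horizontal in row $0$ and vertical only in column $0$). If $(i,j)$ were an elbow, $V$ would turn west there. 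Now row $i$, columns $0$ through $j-1$, is also entirely crosses (column $0$ by the lemma's hypothesis, as $i\ge 1$; columns $1,\dots,j-1$ by the inductive hypothesis), so $V$ would run straight west along row $i$ and become the horizontal strand of the cross $(i,0)$. But then $V$ and $A$ cross both at $(i,0)$ and at $(0,j)$, two distinct crossings of the same pair of pipes, contradicting reducedness. Hence $(i,j)$ is a cross, and the induction goes through; the extreme cases $i=1$ or $j=1$ need no special treatment, the corresponding row/column ranges being empty.

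\textbf{The ``west and north'' case.} For the second assertion I would apply the first to the $180^\circ$ rotation of the pipe dream: rotation by $180^\circ$ carries reduced pipe dreams to reduced pipe dreams (it fixes the cross and elbow tile types and preserves the property that no two pipes cross twice) and interchanges ``east and south'' with ``west and north,'' so the statement is immediate. Equivalently, one runs the mirror-image argument, with the pipe $B$ on the north--west arc of the elbow playing the role of $A$.

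\textbf{Main obstacle.} There is no real difficulty beyond bookkeeping: the crux is to verify that the induction over $i+j$ supplies \emph{exactly} the crosses needed for the strand through $(0,j)$ to descend to $(i,j)$ without bending and, after a hypothetical bend at $(i,j)$, to travel west to column $0$ without bending --- which it does, because all the tiles involved lie strictly northwest of $(i,j)$ within the block. One should also keep in mind that the whole argument is tied to the standard elbow convention stated above; with the opposite elbow type the claim would fail.
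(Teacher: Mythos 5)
Your proof is correct and is essentially the paper's argument: both fill the box by an induction whose key step is that an elbow at an interior tile would force the vertical strand through row $0$ to bend and re-cross the elbow's east--south pipe in column $0$, giving a double crossing that contradicts reducedness. The only cosmetic difference is that you induct on $i+j$ while the paper fills the box row by row, left to right.
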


\begin{proof}
The following figure provides the proof of this lemma.
 \begin{figure}[htbp] \centering
	\includegraphics[scale=0.5,clip=true]{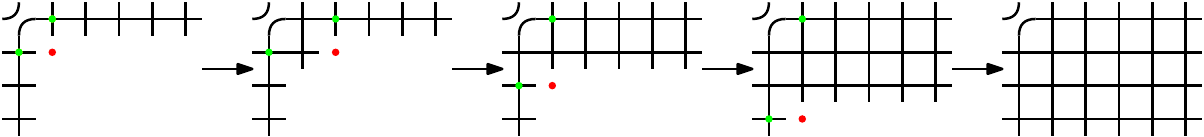}
\end{figure}
Consider the red dot in the first figure on the very left. It cannot be an elbow, or else the two green dots would be a double crossing (non-reduced); so the red dot must be a cross. So once we have filled that tile in with a cross, move the red dot to the right, where it again must be a cross, by a similar argument. We continue inductively until the entire first row is filled in with crosses. Then we move on to the 2nd row, and apply the same argument to fill it in with crosses. We continue on, row by row, until the $n\times m$ rectangle is filled in with crosses. 
\end{proof}

\begin{Prop}\label{prop:Le}
A rectangular pipe dream is rigid if and only if: (1) no cross has both an elbow to its north as well as to its east, and (2) no cross has both an elbow to its south as well as to its west.

\end{Prop}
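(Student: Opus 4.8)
The plan is to pin down exactly when a cross in a rectangular pipe dream admits a move, using Lemma~\ref{lem:fillinrect} to control the local geometry, and then to observe that such a cross exists precisely when one of the two forbidden patterns occurs. Throughout I use that rigid means ``no moves'' (Definition~\ref{def:moves}).

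\emph{The ``only if'' direction} I would do contrapositively: if (1) fails, I produce a move. Suppose a cross $x$ at position $(a,b)$ has an elbow immediately to its north and an elbow immediately to its east. Let $p$ be the pipe through $x$ running vertically and $q$ the pipe through $x$ running horizontally. Follow $p$ upward out of $x$: it runs straight (through crosses) until it turns at the first elbow it meets in column $b$; similarly $q$ runs east until it turns at the first elbow in row $a$. After these turns $p$ heads east and $q$ heads north, so they must arrive at a common cell to the northeast of $x$; that cell cannot be a cross, since $p$ and $q$ already crossed at $x$ and a second crossing would violate reducedness, so it is an elbow at which $p$ and $q$ have a near-miss. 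The pair consisting of $x$ and this near-miss is a move, so the pipe dream is not rigid. If (2) fails, apply the same argument after the $180^{\circ}$ rotation of the rectangle, which interchanges patterns (1) and (2) (this is the northwest--southeast symmetry mentioned earlier).

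\emph{The ``if'' direction.} Assume (1) and (2) hold and suppose, for contradiction, that some move exists, realised by a cross $c$ whose two pipes $p,q$ cross at $c$ and have a near-miss at an elbow $e$. After the $180^{\circ}$ symmetry we may assume $e$ lies to the northeast of $c$. The arc of $p$ leaving $c$ going north and the arc of $q$ leaving $c$ going east, together with the segment from $c$ to $e$, bound a region $\mathcal R$. Each of $p,q$ runs straight through crosses until forced to turn, and since both must reach $e$ they cannot run straight forever; let $\alpha$ be the elbow where $p$ turns and $\beta$ where $q$ turns. I would apply Lemma~\ref{lem:fillinrect} at $\alpha$ (it has crosses to its south, from $p$'s vertical run and from $c$, and crosses to its east, along $p$'s horizontal run after turning) and symmetrically at $\beta$, thereby filling in the forced crosses of $\mathcal R$. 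Inspecting a suitable cross inside $\mathcal R$ adjacent to these turning points --- which the fill-in forces to be a cross, while the turned arcs of $p$ and $q$ place elbows immediately to its north and to its east --- yields a cross violating (1), a contradiction. The case $e$ southwest of $c$ is the $180^{\circ}$ image and contradicts (2).

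\emph{Main obstacle.} The crux is the ``if'' direction: converting the existence of a possibly long-range move (a cross sliding across many cells, as in a chute or ladder move) into the existence of one specific cross with a forbidden local neighbourhood. One cannot read this off near $c$ or near $e$ alone; the argument must follow the entire boundary of the region swept by the move, and this is exactly where Lemma~\ref{lem:fillinrect} carries the load, since it propagates crosses from a corner elbow together with two perpendicular runs of crosses and thus lets us corner the offending cross at an inner vertex of the swept region.
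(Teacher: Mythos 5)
The gap is in your ``only if'' direction (a violation of condition (1) produces a move), which is the hard half of the proposition. After $p$ turns east at the first elbow north of $x$ and $q$ turns north at the first elbow east of $x$, you assert that ``they must arrive at a common cell to the northeast of $x$.'' This does not follow: after its first bend each pipe may bend again, and the two pipes through the violating cross may never have a near-miss at all. Concretely, take a $3\times 3$ rectangle with coordinates (column, row) measured from the southwest corner, with crosses at $(1,1),(2,1),(1,2),(2,2)$ and elbows everywhere else. The cross at $(1,1)$ has an elbow to its north (at $(1,3)$) and to its east (at $(3,1)$), so it violates (1); but its vertical pipe turns east at $(1,3)$ and immediately north again at $(2,3)$, while its horizontal pipe turns north at $(3,1)$ and immediately east again at $(3,2)$, and these two pipes exit the rectangle sharing no tile other than $(1,1)$. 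The pipe dream is indeed not rigid, but the move lives elsewhere: it is the cross at $(2,2)$ together with the near-miss at $(3,3)$, involving two entirely different pipes. So no argument that only tracks the two pipes through the violating cross can succeed, and this is exactly why the paper's proof of this direction is long: it first shows (using reducedness and Lemma~\ref{lem:fillinrect}) that both pipes must bend a \emph{second} time and that everything to the southwest of those second bends is forced to be crosses, and then runs an inductive descent in which the assumption of rigidity keeps forcing more crosses until a strictly smaller copy of the original configuration appears, which cannot continue inside a bounded rectangle.

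Your ``if'' direction (a move produces a violation) is correct in spirit but more complicated than necessary; you do not need the region $\mathcal R$ or the fill-in lemma there, nor a cross with elbows \emph{immediately} to its north and east. Since the near-miss $e$ lies strictly north and strictly east of the crossing $c$ (monotonicity of the pipes forces this), the pipe leaving $c$ northward must turn at some elbow in $c$'s column and the pipe leaving $c$ eastward must turn at some elbow in $c$'s row; as the condition only asks for elbows somewhere to the north and somewhere to the east, the cross $c$ itself already violates (1). This is the paper's ``first bend'' picture.
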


This is essentially a ``double-Le condition," and the proof is similar to statements in Section 19 of \cite{Post} or Section 4.3.3 of \cite{Sni}. As hinted in \cite{Sni}, one of these conditions basically corresponds to the bottom pipe dream (see Appendix B), so the other would correspond to the top pipe dream.

\begin{proof}

First we prove that if a pipe dream satisfies this double-Le condition, then it is rigid. We show the contrapositive, that if a pipe dream is not rigid, then it must violate this condition. The proof is essentially the following picture:
\\

 \begin{figure}[htbp] \centering
	\includegraphics[scale=0.5,clip=true]{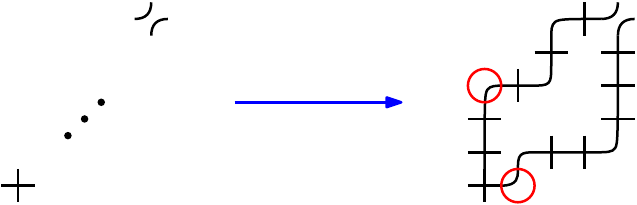}
\end{figure}

Let's assume that this pipe dream is not rigid. This means that, as shown at the top of the figure, there exists two pipes that cross somewhere and later have a near-miss, either to the northeast of the cross as in the figure, or to the southwest (not depicted, but analogous). This requires the two pipes to each have a ``first bend," circled in red in the representative example bottom left figure, because if they never bent, then they would continue on indefinitely north and east and never cross. Thus, a pipe dream with a move either contains a cross with elbows both to its north and east (as shown in this figure), thus violating the north/east Le condition, or it has a move with the figure reflected across the $x=-y$ line; that is, with a violation of the south/west Le condition. 
\\

\minip{.77}{12mm}{ Now we prove the other direction: if a pipe dream violates this double-Le condition, then it must not be rigid. Without loss of generality, let's again assume that it violates the north/east Le condition (the south/west case is analogous). This means that there is a cross with elbows somewhere to its north and east, and we can then assume the elbows to be the closest in these directions. That is, we have:
}
\hfill
\minip{.20}{0mm}{
	\includegraphics[scale=0.4,clip=true]{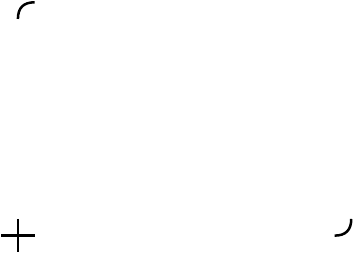}
}
\vskip .05in

\minip{.77}{12mm}{ Furthermore, once they bend at an elbow, they cannot keep moving along straight lines, as depicted in the dotted lines in the following figure, or else they would cross again, which is violation of reducedness:
}
\hfill
\minip{.20}{0mm}{
	\includegraphics[scale=0.53,clip=true]{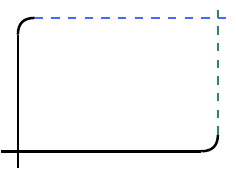}
}
\vskip .05in

\minip{.77}{12mm}{Because of this, at least one of the pipes must have another bend. In fact, we must have each pipe bending a 2nd time because otherwise we would be able to apply Lemma \ref{lem:fillinrect} to this rectangular region of interest to arrive at a nonreduced pipe dream, like the following figure: 
}
\hfill
\minip{.20}{0mm}{
	\includegraphics[scale=0.53,clip=true]{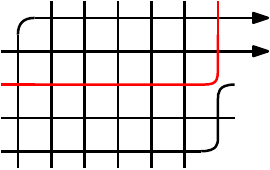}
}
\vskip .05in

\minip{.77}{12mm}{ Thus, putting in these additional bends, and filling in the parts to the west and south of the bends with crosses via an application of Lemma \ref{lem:fillinrect}), the pipe dream looks something like:
}
\hfill
\minip{.20}{0mm}{
	\includegraphics[scale=0.5,clip=true]{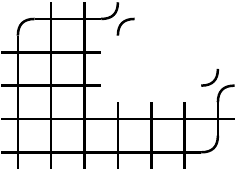}
}
\vskip .05in

In other words, by imposing a violation of the double Le condition and reducedness (simply having a valid pipe dream), we have concluded that we have two pipes that cross, that each bend, and then bend again, and everything to the southwest of the rectangle formed by the places where their second bends are must be filled in with crosses.
\\

\minip{.77}{12mm}{ Next, we apply the assumption of rigidity we made for the sake of contradiction. In the following figure, if the elbow on the right were to bend inwards, then we would have the red elbow depicted, and there would be the possible move depicted by the two red dots, which is not allowed by rigidity:
}
\hfill
\minip{.20}{0mm}{
	\includegraphics[scale=0.5,clip=true]{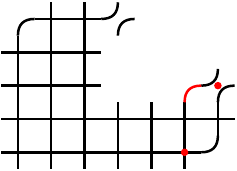}
}
\vskip .05in

\minip{.77}{12mm}{ So we must have the green cross below. But then, inductively, we can make the same argument for the spot next to it, for if it was an elbow, then we would have the possible move with the two red dots:
}
\hfill
\minip{.20}{0mm}{
	\includegraphics[scale=0.5,clip=true]{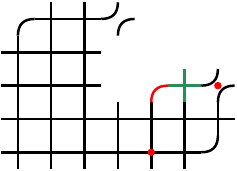}
}
\vskip .05in

\minip{.77}{12mm}{ Thus, it is clear that by this argument, we must fill in everything along the same lines as the elbows with crosses, to get the following:
}
\hfill
\minip{.20}{0mm}{
	\includegraphics[scale=0.5,clip=true]{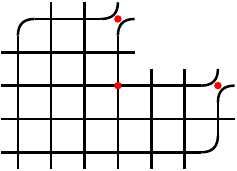}
}
\vskip .05in

\minip{.77}{12mm}{ If we look at the red dots, we see that the configuration that we started with (with two northwest southeast elbows and a cross at one of the corners) is here, only smaller. We can keep doing this inductively, but due to the finite size, we cannot go on forever. We must reach a point where we have a move, giving us the contradiction:
}
\hfill
\minip{.20}{0mm}{
	\includegraphics[scale=0.5,clip=true]{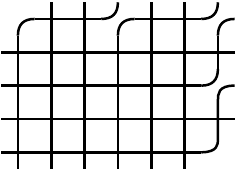}
}

\end{proof}

\begin{Cor}
If a pipe dream is rigid within a rectangular shape, then every time there is a cross with an elbow to its south, then it must have crosses extending all the way west. We can visualize it as follows:

 \begin{figure}[htbp] \centering
	\includegraphics[scale=0.35,clip=true]{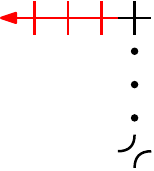}
\end{figure}

The same statement is true with ``west" and ``south" reversed, as well as replaced with ``north" and ``east". 
\end{Cor}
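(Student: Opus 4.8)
The plan is to prove the statement in the form written — a cross at $(a,b)$ with an elbow immediately below it, at $(a{+}1,b)$, forces every tile $(a,1),\dots,(a,b)$ of that row to be a cross — and to obtain the other three forms from the symmetries of a rigid rectangular pipe dream: $180^{\circ}$ rotation and reflection across a diagonal both preserve reducedness and rigidity (indeed they interchange the two conditions of Proposition~\ref{prop:Le}). For the first form, the opening step is immediate from that proposition: since the cross has an elbow to its south, condition~(2) forbids an elbow to its west, so $(a,b{-}1)$ is a cross. The catch is that this does not propagate on its own, because $(a,b{-}1)$ need not have an elbow beneath it; so the whole argument is set up as a strong induction on the column $b$, with the case $b=1$ vacuous.

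For the inductive step, assume for contradiction that row $a$ has an elbow strictly west of column $b$, and let $j^{*}$ be the largest such column, so $(a,j^{*})$ is an elbow and $(a,j^{*}{+}1),\dots,(a,b)$ are crosses. Let $P$ and $Q$ be the two pipes crossing at $(a,b)$, with $P$ passing vertically and $Q$ horizontally through that tile. Then $P$ leaves $(a,b)$ going south into the elbow $(a{+}1,b)$ and turns west along row $a{+}1$, while $Q$ leaves $(a,b)$ going west, runs along row $a$ until it reaches the elbow $(a,j^{*})$, and turns south into $(a{+}1,j^{*})$.

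Now split into two cases. First, if every tile $(a{+}1,j^{*}{+}1),\dots,(a{+}1,b{-}1)$ is a cross, then $P$ runs straight west along row $a{+}1$ and arrives at $(a{+}1,j^{*})$ from the east, while $Q$ has arrived there from the north; were $(a{+}1,j^{*})$ a cross, $P$ and $Q$ would cross there a second time, impossible in a reduced diagram, so $(a{+}1,j^{*})$ is an elbow and $P,Q$ have a near-miss at it. Interchanging the cross tile $(a,b)$ with the elbow tile $(a{+}1,j^{*})$ reroutes $P$ through the remaining tiles of row $a$ and $Q$ through the remaining tiles of row $a{+}1$ inside the rectangle $\{a,a{+}1\}\times\{j^{*},\dots,b\}$; every other tile of that rectangle is a cross or one of the corner elbows $(a,j^{*}),(a{+}1,b)$, so no other pipe is affected and the permutation is unchanged — a move, contradicting rigidity. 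Second, if some $(a{+}1,m)$ with $j^{*}<m<b$ is an elbow, then $(a,m)$ is a cross lying directly above an elbow, with $m<b$, so the inductive hypothesis applied to $(a,m)$ gives that $(a,1),\dots,(a,m)$ are all crosses; in particular $(a,j^{*})$ is a cross, a contradiction. Either way we reach a contradiction, so $(a,1),\dots,(a,b)$ are all crosses.

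The step I expect to require the most care is the first case: verifying that the tile interchange is a genuine pipe-dream move, i.e.\ that rerouting $P$ and $Q$ inside $\{a,a{+}1\}\times\{j^{*},\dots,b\}$ reconnects exactly the same boundary points. This reduces to the bookkeeping that this rectangle contains only the two swapped tiles, the intervening crosses of rows $a$ and $a{+}1$ (through which $P$ and $Q$ run straight), and the two corner elbows, so that every other pipe passes through it unchanged; the boundary sub-case $j^{*}=b{-}1$, in which the rectangle is the $2\times 2$ block realizing an ordinary chute move, should be spelled out separately. If one prefers to avoid the reducedness argument that $(a{+}1,j^{*})$ is an elbow, it can instead be deduced from Lemma~\ref{lem:fillinrect} applied to the elbow $(a,j^{*})$ (which has at least two crosses to its east): were $(a{+}1,j^{*})$ a cross, the whole of row $a{+}1$ between columns $j^{*}{+}1$ and $b$ would be forced to be crosses, contradicting that $(a{+}1,b)$ is an elbow.
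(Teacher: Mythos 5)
Your argument is correct, but it does substantially more work than the paper intends, and the discrepancy comes from how you read Proposition~\ref{prop:Le}. You interpret ``elbow to its south/west'' there as referring to the \emph{adjacent} tile, so condition~(2) only hands you the single cross at $(a,b{-}1)$, and you then need the strong induction with the two-case analysis to push westward. The proof of Proposition~\ref{prop:Le}, however, makes clear that its conditions concern elbows \emph{anywhere} in the relevant row and column (``there is a cross with elbows somewhere to its north and east, and we can then assume the elbows to be the closest in these directions''); under that reading the corollary is literally the contrapositive of condition~(2) --- a cross with an elbow below it in its column can have no elbow at all to its west in its row --- plus the $180^{\circ}$-rotation and transposition symmetries you describe, which is why the paper states it without proof. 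That said, your self-contained argument is sound: the induction is well-founded, Case~2 correctly bootstraps off the smaller column index, and in Case~1 reducedness (or Lemma~\ref{lem:fillinrect}, as you note) forces $(a{+}1,j^{*})$ to be an elbow, producing a cross/near-miss pair of the two pipes $P,Q$ and hence a move in the sense of Definition~\ref{def:moves}; your configuration is exactly a $2\times(b-j^{*}+1)$ instance of the box configurations of Corollary~\ref{cor:box}, i.e.\ a ladder move of \cite{BB93}, and the connectivity bookkeeping for the swap checks out since the rectangle $\{a,a{+}1\}\times\{j^{*},\dots,b\}$ contains only the two exchanged tiles, crosses, and the two corner elbows. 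In effect you have re-derived, localized to one row, the half of Proposition~\ref{prop:Le} you needed rather than deducing the corollary from it; the one caveat is that if the corollary is meant with ``elbow anywhere to its south'' (the reading consistent with Proposition~\ref{prop:Le}), your induction as written only covers the adjacent-elbow case, whereas the contrapositive argument gives the general statement for free.
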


We note that the previous Prop \ref{prop:Le} was only valid within rectangular shapes. In nonrectangular shapes, a violation of the Le condition is not sufficient to conclude nonrigidity. A counterexample is provided by the following figure, where the two red dots represent a Le-violating condition, yet the portion of this pipe dream depicted is perfectly rigid. 
 \begin{figure}[htbp] \centering
	\includegraphics[scale=0.5,clip=true]{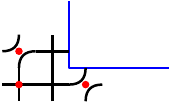}
\end{figure}

The following corollary, however, gives a condition for rigidity in any pipe dream shape (not just rectangular ones).

\begin{Cor}\label{cor:box} [of the proof]

An affine pipe dream is rigid if and only the following two types of rectangular pipe crossings do not appear anywhere:

 \begin{figure}[htbp] \centering
	\includegraphics[scale=0.5,clip=true]{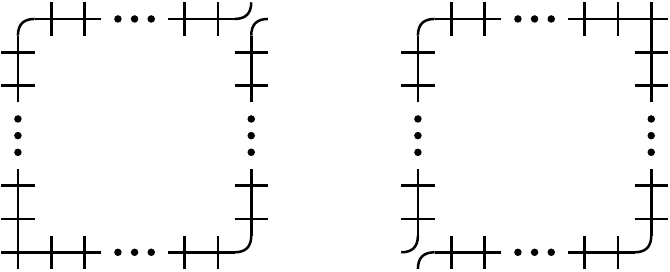}
\end{figure}
Note that the arguments above about reducedness above imply that the inside of these rectangles is filled in completely with crosses. 
\end{Cor}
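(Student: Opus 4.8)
The plan is to deduce Corollary~\ref{cor:box} directly from the proof of Proposition~\ref{prop:Le} together with Proposition~\ref{prop:insidemaxrect}; the only new point over the rectangular case is that every configuration and every argument we need is confined to a genuine finite rectangle, so the ambient affine shape being an infinite strip rather than a rectangle is irrelevant.

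For the direction ``no such rectangular pipe crossing appears $\Rightarrow$ rigid'' I would argue by contrapositive. Suppose the affine pipe dream is not rigid, so it admits a move. By Proposition~\ref{prop:insidemaxrect} this move, together with the relevant portions of the two pipes, lies inside a maximal rectangle $R$; restricted to $R$ we therefore have a non-rigid rectangular pipe dream. By Proposition~\ref{prop:Le} this rectangular pipe dream violates the double-Le condition, so (after the $x=-y$ reflection if necessary) some cross has an elbow both to its north and to its east. Now I would invoke the portion of the proof of Proposition~\ref{prop:Le} that uses reducedness and Lemma~\ref{lem:fillinrect}: the two pipes through that cross cannot continue straight (reducedness) and cannot bend only once (Lemma~\ref{lem:fillinrect} would produce a non-reduced pipe dream), so each pipe bends a second time, and Lemma~\ref{lem:fillinrect} fills the rectangle cut out by the two second bends entirely with crosses. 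This is exactly one of the two forbidden rectangular crossings from the figure, so such a crossing does appear --- a contradiction.

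For the converse, ``such a rectangular pipe crossing appears $\Rightarrow$ not rigid'', I would argue by contradiction, assuming both that such a configuration appears and that the pipe dream is rigid. The configuration occupies a bounded rectangle $R'$ of some finite size, with the two pipes crossing, each bending twice, and the interior of $R'$ filled with crosses. Rigidity then forces, exactly as in the final induction of the proof of Proposition~\ref{prop:Le} (the ``if the elbow were to bend inwards, there would be a move'' step), that the cells along the two elbow lines be filled with crosses, which exposes a strictly smaller copy of the same cross-with-two-NW/SE-elbows configuration inside $R'$. Iterating this and using that $R'$ is finite, one reaches a configuration that admits a move, contradicting rigidity. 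The one thing to verify is that this entire descent stays inside the fixed finite rectangle $R'$ and never appeals to the global shape; since each step only adds crosses along elbow lines already contained in $R'$ and then passes to a sub-rectangle of $R'$, this is immediate, and it is precisely why the ``rectangular shape'' hypothesis of Proposition~\ref{prop:Le} can be dropped here. The parenthetical remark in the statement --- that the interiors of these rectangles are automatically filled with crosses --- is then just a restatement of the Lemma~\ref{lem:fillinrect} step used in both directions.

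I expect the main obstacle to be expository rather than mathematical: stating the ``localization'' cleanly enough that it is evident the finite-descent argument of Proposition~\ref{prop:Le} survives verbatim inside $R'$, and handling the two mirror-image cases (northeast bend versus southwest bend) carefully so that both forbidden configurations in the figure are genuinely covered and neither direction silently assumes the ambient shape is a rectangle.
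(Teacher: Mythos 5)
Your overall strategy is the same as the paper's --- both proofs are extracted from the proof of Proposition~\ref{prop:Le} --- but you have distributed the key descent argument between the two directions incorrectly, and as written each direction has a defect.

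The direction ``box appears $\Rightarrow$ not rigid'' is immediate and needs no descent: the box configuration consists of two pipes that cross at one corner and have a near-miss at the diagonally opposite corner (this is why the chute/ladder moves of \cite{BB93} are the $(1,j)$ and $(j,1)$ special cases), so the configuration \emph{is} a cross-elbow move and rigidity fails on the spot. Your contradiction-plus-descent argument for this direction suggests you are identifying the box configuration with the intermediate ``two pipes cross, bend twice, southwest filled with crosses'' picture from the proof of Proposition~\ref{prop:Le}; that intermediate picture does not yet exhibit a near-miss, and the descent you run there does not apply to the actual box configuration, whose elbows are already adjacent.

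The descent is instead needed in the other direction, and that is exactly where your write-up omits it. After you localize the move to a maximal rectangle (a reasonable use of Proposition~\ref{prop:insidemaxrect}), obtain the double-Le violation, force the two second bends, and fill in with crosses via Lemma~\ref{lem:fillinrect}, you assert ``this is exactly one of the two forbidden rectangular crossings.'' It is not yet: the two pipes cross and each bends twice, but nothing guarantees they have a near-miss at the opposite corner of the resulting rectangle --- they may bend again. The paper closes this gap by observing that either some elbow in the remaining portion produces the required near-miss (yielding the box), or else one passes to a strictly smaller box and iterates; finiteness of the rectangle terminates the induction. You need to restore that case analysis and iteration to your first direction for the proof to be complete.
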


{\em Remark 1.} This corollary does not state that in a nonrigid \textit{pipe dream}, that every nonrigid \textit{pair of pipes} must be of this form. In fact, two pipes that cross and bend inwards at some point before having a near-miss is indeed possible; however, even in this case, the nonrigid pipe dream must contain nonrigid pairs of pipes of the form depicted in this corollary. The following figure gives an example, where the red and blue types have a move that is not of the type stated in the corollary, but there are 3 pairs of pipes that do have a move of the stated type, which are depicted in the 3 pairs of (different shades of) green dots: 

 \begin{figure}[htbp] \centering
	\includegraphics[scale=0.6,clip=true]{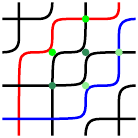}
\end{figure}

{\em Remark 2.} First, we recall that the main theorem gives a criterion for testing smoothness (equivalent to rigidity) which is checkable on maximal rectangles. The fact that we only need to check on maximal rectangles is evident from this corollary, since this sort of pipe dream configuration can only exist within rectangular shapes. Of course, this is just a special case of Proposition \ref{prop:insidemaxrect}.

\begin{proof}
The proof of Corollary \ref{cor:box} is already in the proof of the Proposition \ref{prop:Le}. One direction is obvious: a box configuration as depicted implies nonrigidity. Conversely, if we have something that is nonrigid, then we have a cross and then a near-miss for a pair of pipes, like in the previous figure. Then these pipes must bend somewhere. The south and west portions of these bends must be filled in with crosses via the Lemma, as explained in the previous proof. Then, if there are any elbows in the remaining portion, they will form a rectangular configuration like that depicted. If not, then we arrive at a smaller box, as explained previously. 
\end{proof}

Note that this rule, that neither of the two box configurations above occur, encompasses the sorts of moves considered in \cite{BB93}. Specifically, they considered moves of the following form:

 \begin{figure}[htbp] \centering
	\includegraphics[scale=0.5,clip=true]{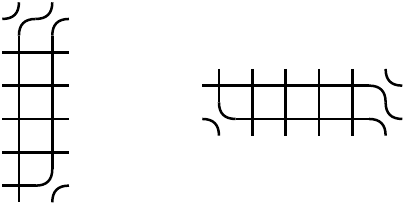}
\end{figure}

These can be viewed as size $(1,j)$ or $(j,1)$ boxes of the form in the previous corollary. 

\begin{Thm}\label{thm:NW/SE}
All rigid rectangles reduce to NW/SE partitions.
\end{Thm}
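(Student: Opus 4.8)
The plan is to reduce to the double-Le criterion of Proposition \ref{prop:Le} and then show that the double-Le condition, inside a rectangle, forces every cross to lie in one of the four families appearing in the definition of ``reduces to SE/NW partitions'': an entire row of crosses, an entire column of crosses, the maximal NW partition $\mu$ of crosses (the crosses $(i,j)$ for which the whole box $[1,i]\times[1,j]$ consists of crosses — this set is automatically a Young-diagram shape anchored at the NW corner), or the maximal SE partition $\nu$. Since by the definition that condition on the crosses is exactly ``reduces to SE/NW partitions,'' this suffices. So the work is: take a cross $c=(i,j)$ lying in no full row and in no full column, and show $c\in\mu$ or $c\in\nu$.

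The key auxiliary tool is the corollary of Proposition \ref{prop:Le} (the ``extension facts''): in a rigid rectangle, an elbow immediately south of a cross forces that cross's row to be all crosses from the west edge up to it, and the three rotations of this — an elbow immediately north forces crosses east to the east edge, an elbow immediately east forces crosses north to the top in that column, an elbow immediately west forces crosses south to the bottom in that column. Now for $c=(i,j)$: double-Le forbids $c$ from having adjacent elbows both north and east, and both south and west, so the set of directions in which $c$ has an adjacent elbow is contained in one of $\{N,S\}$, $\{N,W\}$, $\{E,S\}$, $\{E,W\}$. In the ``straight'' cases (the elbow-directions lie in $\{N,S\}$ or in $\{E,W\}$) the two cells across $c$ in the other axis are crosses (or boundary); since $c$ is not in a full line, walking along crosses to the nearest elbow in that full line lands on a cross with an appropriate adjacent elbow, to which an extension fact applies to fill a full transverse run of crosses, and pushing that run against $c$ and invoking reducedness (Lemma \ref{lem:fillinrect}) fills the rectangle they bound, placing $c$ in $\mu$ or $\nu$ (or contradicting the assumption that $c$'s own row/column was not full). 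In the ``bent'' cases one first uses the extension facts to surround $c$ on two sides by full runs of crosses reaching the boundary, so that $c$ becomes the corner of a large $L$-shaped block of crosses filling the two outer sides of, say, the rectangle $[i,p]\times[j,q]$, and then one fills the interior of that $L$ by a finite descending induction on the size of the still-uncovered sub-rectangle, mirroring the ``endsquare'' step in the proof of Proposition \ref{prop:Le}: any interior elbow is southeast of a cross with crosses running west and north of it, so Lemma \ref{lem:fillinrect} together with a renewed use of rigidity either fills more of the rectangle or produces a strictly smaller copy of the same configuration, and finiteness closes the loop, giving $[i,p]\times[j,q]$ all crosses and $c\in\nu$. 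Finally, the NW-type crosses together form a single partition and likewise the SE-type crosses, so deleting all full rows and columns leaves exactly one NW and one SE partition.

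The step I expect to be the main obstacle is the ``bent'' case. There the extension facts only deliver the outer boundary $L$ of the relevant rectangle, not its interior, so one is forced into a self-contained fill-in induction combining Lemma \ref{lem:fillinrect} with a repeated appeal to rigidity — and its termination and boundary bookkeeping (in particular never invoking a cell outside the rectangle, and correctly handling $c$ lying on an edge of the rectangle) is where the genuine care is required. The ``straight'' case and the final assembly are comparatively routine propagations of the four extension facts and the reducedness lemma.
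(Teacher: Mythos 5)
Your proposal is correct and follows essentially the same route as the paper: both arguments run on Proposition \ref{prop:Le} (the double-Le condition), its corollary forcing runs of crosses to extend to the walls, Lemma \ref{lem:fillinrect}, and a case analysis on which sides of a given cross carry adjacent elbows, followed by an iterative fill-in that places every cross in a full row, a full column, or the NW/SE partition. Your organization by forbidden elbow-direction sets $\{N,E\}$, $\{S,W\}$ is just the complementary bookkeeping of the paper's nine-case diagram (extend N/E/both crossed with extend S/W/both), and your ``bent-case'' interior induction is the same mechanism as the paper's resolution of configurations (c) and (d).
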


\begin{proof}

What we will prove below is that every cross (schematically shown by circling the crosses in red) is contained in one of the four configurations below.

\begin{figure}[htbp]\centering
	\includegraphics[scale=0.5,clip=true]{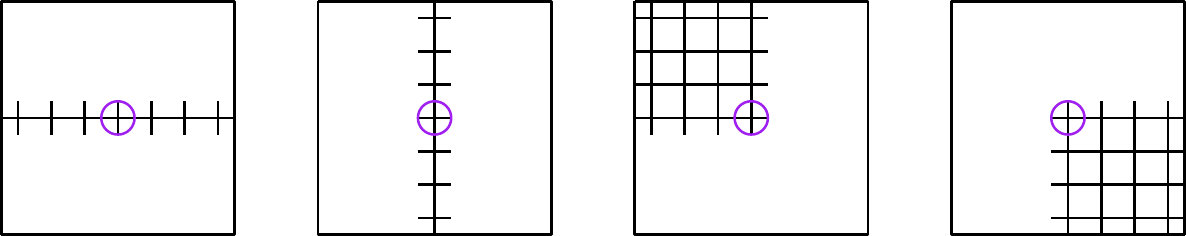}
\end{figure}

The proof is mostly constructive, and proceeds by going through the following diagram:

  \begin{figure}[htbp]\label{thegrid}\centering
	\includegraphics[scale=0.45,clip=true]{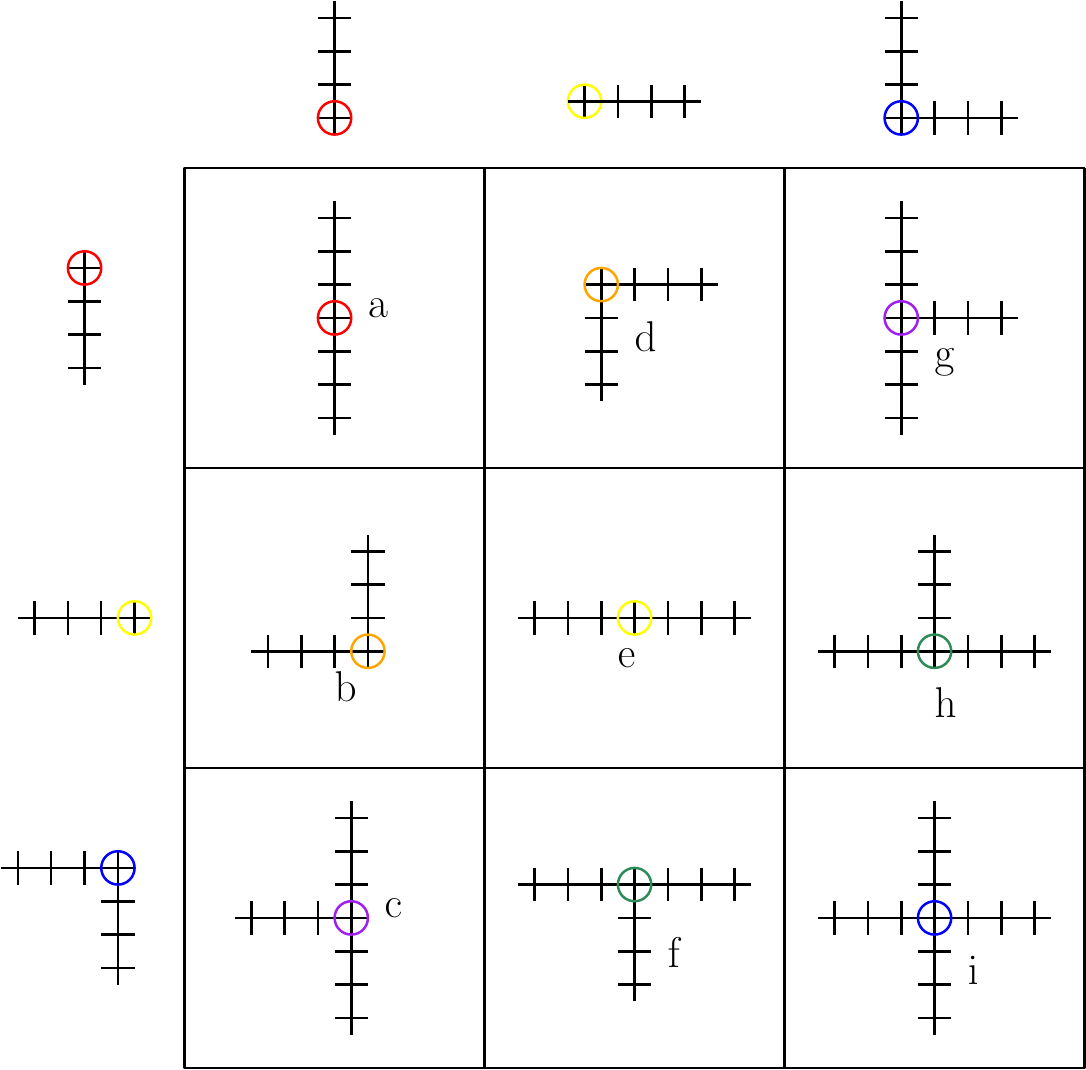}
	\caption{Cases of the theorem}
\end{figure}

\newpage

We can think of these as two steps, which we can call NE and SW. These come from the previous theorem which shows that any cross within a rigid diagram cannot have elbows both to its north and east, or both to its south and west. Thus, to ensure that both the NE condition and SW condition are satisfied, we can think of doing 2 steps: ensuring that the NE condition is satisfied, and ensuring that the SW condition is satisfied. 
\\

Looking at the diagram, we start with a single cross, which we mark by giving it a circle. The top (separating the columns) then shows the result of applying the NE condition to this cross. Any time we have a series of three crosses coming out from the circled cross, that pictorially depicts an unbroken line of crosses stretching all the way to the wall. Thus, for example, the leftmost configuration at the top shows the cross (circled in red), with a line of crosses stretching north all the way to the wall. This satisfies the NE condition, since it can no longer have elbows \textit{both} to its north and east. If we then apply the SW step to this configuration, then we obtain diagrams (a), (b) and (c). Diagram (a) is obtained by satisfying the SW rule by extending south, (b) by ending west, and (c) by extending both south and west. Diagrams (d)-(f), and (g)-(i) follow the same pattern of elongation starting from their diagram satisfying the SW condition.
\\

Now, each (a), (e), and (i) is already contained in an entire row or column of crosses (or union thereof), so they already satisfy the conditions of the theorem. Thus, we have to deal with the rest. We do the argument for two of these: (c) and (d). The others follow by analogous arguments, and are left to the reader.
\\

\minip{.77}{12mm}{ (c) looks like the following:
}
\hfill
\minip{.20}{0mm}{
  \includegraphics[scale=0.5,clip=true]{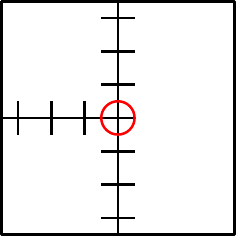}
}
\vskip .05in

\minip{.77}{12mm}{ The crosses along the vertical line all satisfy both NE and SW conditions. However, note that the empty space is unknown: at the moment, it can be either crosses or elbows. Thus, if we look at the crosses boxed in grey in the next figure, they do not satisfy the NE condition if the empty space is all elbows. 
}
\hfill
\minip{.20}{0mm}{
  \includegraphics[scale=0.5,clip=true]{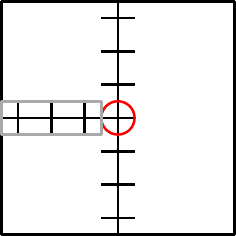}
}
\vskip .05in

\minip{.77}{12mm}{ In order to have these crosses satisfy the NE condition, we can extend them east, or extend them north. If we extend them east, then this extra set of crosses is sufficient for satisfying the NE condition for all the crosses boxed in grey:
}
\hfill
\minip{.20}{0mm}{
  \includegraphics[scale=0.5,clip=true]{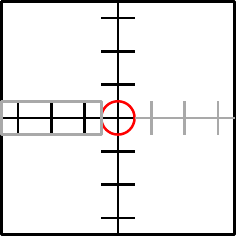}
}
\vskip .05in

\minip{.77}{12mm}{ However, if we cannot extend east (that is, if there is any elbow to the east of the cross circled red), then we must extend north individually for each of the crosses boxed in grey:
}
\hfill
\minip{.20}{0mm}{
  \includegraphics[scale=0.5,clip=true]{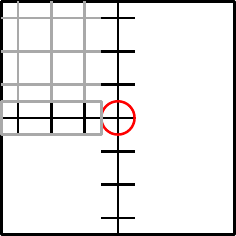}
}
\vskip .05in

In fact, we have shown something slightly stronger than the theorem. The configuration we started with here already had the cross circled in red contained within an entire column of crosses. Thus, what we showed is that if a cross is contained within an entire column or row of crosses, and the perpendicular direction has a line of crosses extending all the way to the wall in only one of the two possible directions, then it must be part of a northwest or southeast partition. 
\\

Now let us take a look at (d):

\begin{figure}[htbp] \centering
	\includegraphics[scale=0.5,clip=true]{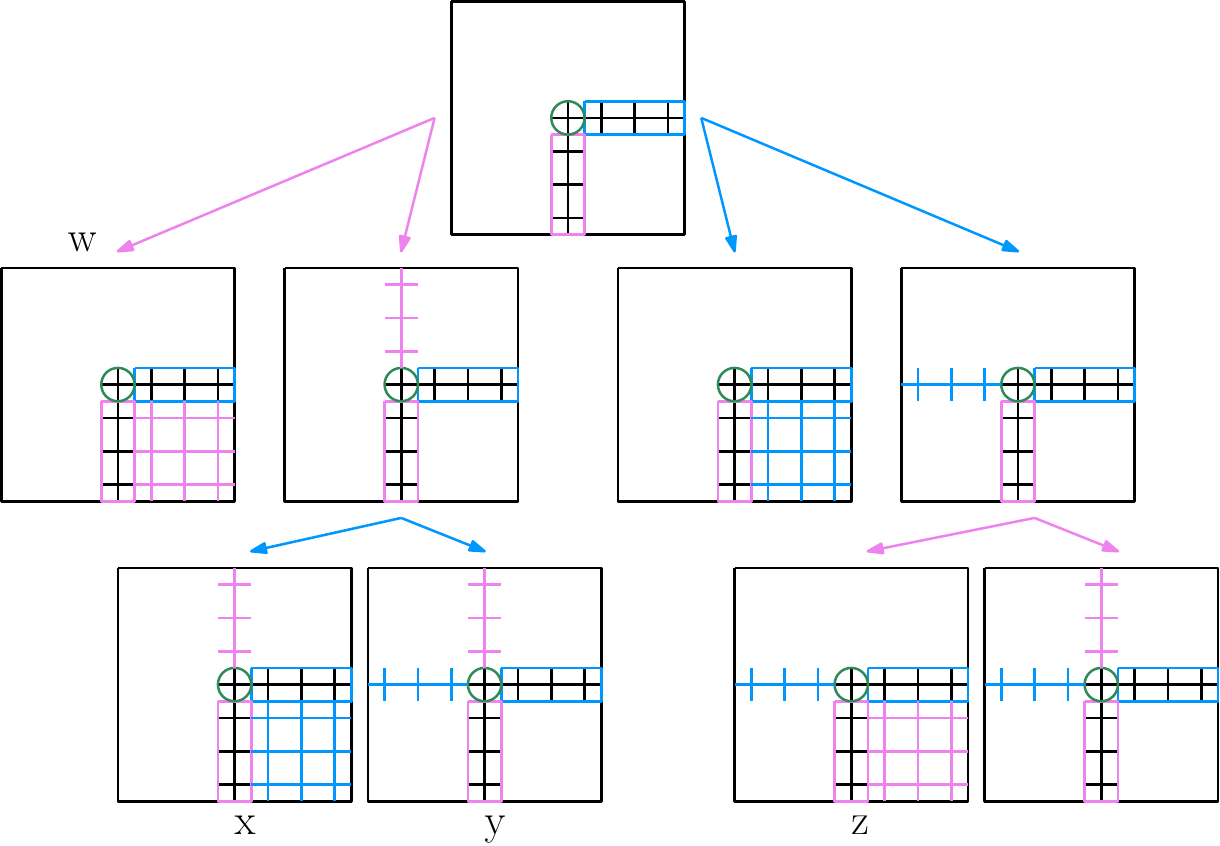}
\end{figure}

Here we have boxed the crosses in pink and light blue. The pink crosses satisfy the SW condition but not the NE. The light blue crosses satisfy the NE condition but not the SW. Following the arrows through the figure, where the pink arrows represent resolving the SW condition, and analogously for the light blue arrows (the details are the same as the description for the case of configuration (c) above), we obtain 6 final configurations. Some are redundant, so the unique ones are labelled (w), (x), (y), and (z). 
\\

It is clear that by symmetry, all other cases in (a)-(i) will just be certain reflections/rotations of what we have already shown. This proves the theorem. 

\end{proof}

\begin{proof}[Third proof of Proposition \ref{prop:delsmooth}]

We are trying to show that if the point $\lambda$ is smooth on $\Pi_f$, then $\lambda$ is also smooth at any deletion or contraction of $\Pi_f$. 

Since we assume that $\lambda$ is smooth on $\Pi_f$, we can apply Theorem \ref{thm:smoothone}, which says that the pipe dream for $\Pi_f$ on $U_\lambda$ is rigid. (This implies that any subset of the pipe dream (for example, any maximal rectangle) is also rigid, since no two pipes containing any cross and near-miss combo in the whole pipe dream implies the same thing for any subset; but we will not need this fact for this proof, and this can already by deduced from the Main Theorem since rectangles that reduce to NW/SE partitions are rigid). This also implies that if we do deletion or contraction by filling in entire rows or columns of crosses, as explained in Prop \ref{prop:delpipedream}, we do not need to (since in fact we cannot) do any moves prior to the filling in - rather, in this smooth/rigid case, deletion (contraction) of column (row) $i$ simply involves taking the $ith$ column (row) and brute-force changing every elbow tile in this column (row) into a cross tile. In the second (combinatorial) proof of Proposition \ref{prop:delsmooth} above, we did a detailed argument to show that this ``brute-force filling in" did not create any new moves, but now we show that we can bypass that argument by applying the Main Theorem.

Recall the definition of ``reducing to NW/SE partitions": this means having a rectangular shape where the northwest and southeast were partitions, and additionally, any number of rows or columns could be filled in with crosses; everything else is elbows. The Main Theorem says that (in this smooth case) all maximal rectangles reduce to NW/SE partitions; now, taking any one of these maximal rectangles (which reduce to NW/SE partitions) and filling in a row or column with crosses is still precisely a shape that reduces to NW/SE partitions. Therefore, it is still smooth. 

\end{proof}

\section{Atomic Positroid Pairs}

Next, we describe the singular pairs that are lowest in the order: recall that we called these ``atomic positroid pairs." By definition, any deletion or contraction of an atomic positroid pair results in a smooth positroid pair (see the figures in the introduction). The order we defined on positroid pairs has the property that for any given pair, as we go down in the order, we can only delete or contract a finite number of times. Because of this, any singular pair lies over an atomic pair, i.e. these minimal elements generate the order ideal.

\begin{Thm}\label{thm:atomicpd}

The atomic positroid pairs have affine pipe dreams that look like the following:

 \begin{figure}[htbp] \centering
	\includegraphics[scale=0.6,clip=true]{atomicthing.pdf}
\end{figure}

More precisely, the affine pipe dreams looks like the following, with each square labeled ``A" being the same, and taking the form of the previous figure:

 \begin{figure}[htbp] \centering
	\includegraphics[scale=0.6,clip=true]{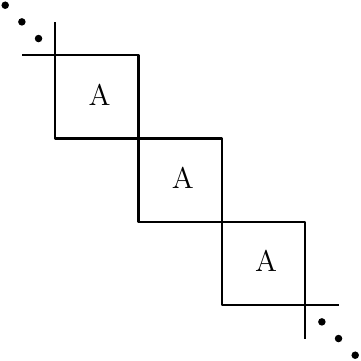}
\end{figure}

\newpage

Specifically:

(1) The atomic positroid pairs have pipe dreams in a square shape: so they are in $Gr(k,2k)$ and on points where $\lambda$ consists of $k$ consecutive columns. 

(2) In this pipe dream, there is a single cross along the southwest to northeast (longest) diagonal, with the diagonals directly adjacent to this longest diagonal being free of any crosses.

(3) Besides this one cross, there are partitions of crosses in the northwest and southeast corners, and no others.

\end{Thm}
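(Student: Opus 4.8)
The plan is to read the atomicity condition through the combinatorial dictionary of the previous sections: by Theorem~\ref{thm:smoothone}, ``atomic'' means the affine pipe dream $\delta$ of $(\Pi_f,\lambda)$ is non-rigid, but filling in any single row or any single column with crosses (this is exactly deletion or contraction of one column of the Grassmannian, followed by the normalization and projection of Proposition~\ref{prop:delpipedream}) produces a rigid diagram, i.e.\ one all of whose maximal rectangles reduce to NW/SE partitions (Theorem~\ref{thm:mainthm}). Since $\delta$ is non-rigid, Corollary~\ref{cor:box} gives a ``box'' configuration $B$ inside $\delta$ — a rectangular region with crossed interior exhibiting a cross/near-miss move — and by Proposition~\ref{prop:insidemaxrect} this $B$ sits inside a maximal rectangle $R$. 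The argument then splits into a shape part (establishing (1)) and a structure part (establishing (2) and (3)), followed by a quick converse check.

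For the shape, the key observation is purely local: if we fill in (with crosses) any row or column of the periodic strip that is disjoint from the bounding rectangle of $B$, the cross/elbow pattern inside that bounding rectangle — hence the box $B$ itself — is unchanged, so by Corollary~\ref{cor:box} the result is still non-rigid, contradicting atomicity. Therefore every row and every column of a fundamental domain of the strip must meet the bounding rectangle of $B$. Since a fundamental domain has exactly $k$ rows and $n-k$ columns and the bounding rectangle is a rectangle, this forces the pipe dream shape to be a strip of translates of a single rectangle $R$ — equivalently, $\lambda$ consists of $k$ consecutive columns — with $R$ of size $k\times(n-k)$ and coinciding with the bounding rectangle of $B$. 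To upgrade ``rectangle'' to ``square'', i.e.\ $n=2k$, I would use the structure part below: the unique Le-violating cross $c_0$ has to be moveable (by cross-elbow moves, as in Proposition~\ref{prop:Le}) into \emph{every} row and \emph{every} column of $R$, because the deletion/contraction along a given line first maximizes the number of crosses on that line and must end up absorbing $c_0$; the set of lines reachable this way is maximal exactly when $R$ is square and $c_0$ is maximally central, which is the claimed configuration. The off-diagonal slack present when $k\neq n-k$ always leaves some row or column that $c_0$ cannot reach, so the corresponding operation leaves a surviving violation, contradicting atomicity.

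For the internal structure of the (now square) $R$: by Theorem~\ref{thm:NW/SE} together with Proposition~\ref{prop:Le}, $R$ is singular precisely because it contains a cross that lies in none of (i) an entire row of crosses, (ii) an entire column of crosses, (iii) the NW partition, (iv) the SE partition — call such a cross \emph{bad}. Atomicity then forces, one constraint at a time: there is no entire row or column of crosses (deleting/contracting it is a trivial operation leaving the residual box, contradicting atomicity, using Proposition~\ref{prop:delsmooth}); there is exactly one bad cross $c_0$ (two bad crosses in distinct rows and distinct columns would require two independent deletions/contractions to kill, again contradicting atomicity; Lemma~\ref{lem:fillinrect} controls how they could interact); and $c_0$ lies on the longest SW--NE diagonal with both neighbouring diagonals free of crosses, because — as in the square-ness argument — $c_0$ must be able to migrate into each of the $k$ rows and $k$ columns of $R$, and the NW and SE partitions must therefore stay two diagonals clear of $c_0$ so as not to block its migration, while only the central (longest) diagonal position allows migration to all rows and columns. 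Everything else in $R$ is then an NW or SE partition by Theorem~\ref{thm:NW/SE}. Conversely, any configuration of the stated form is singular by the Main Theorem (the single diagonal cross is bad), and for each column one checks directly that maximizing crosses along the corresponding row or column slides $c_0$ into that line (or into a partition), so the deletion/contraction reduces to NW/SE partitions and is smooth by Theorem~\ref{thm:smoothone}; hence it is genuinely atomic.

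The main obstacle I anticipate is controlling the interaction between the ``maximize crosses along a line, then fill'' operation of Proposition~\ref{prop:delpipedream} and the unique bad cross $c_0$: because a non-rigid diagram must be normalized by cross-elbow moves before a deletion or contraction is performed, one cannot simply erase a row of $\delta$, and must instead track all positions into which $c_0$ can be driven by ladder/chute moves without colliding with the two partitions. Making precise that this migration reaches every row and column \emph{iff} $R$ is a square, $c_0$ is on the longest diagonal, the adjacent diagonals are empty, and the partitions sit clear of these three diagonals is the heart of the proof; I would carry it out with the same local cross-elbow bookkeeping used in the proofs of Proposition~\ref{prop:Le} and Theorem~\ref{thm:NW/SE}, organized by the position of $c_0$ relative to the NW and SE partitions, and I would use Grassmannian duality (which swaps $k\leftrightarrow n-k$, NW $\leftrightarrow$ SE, deletion $\leftrightarrow$ contraction) to halve the casework.
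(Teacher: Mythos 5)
Your overall strategy is the same as the paper's: reduce atomicity to the statement that the pipe dream has essentially one moveable cross, argue that the positions this cross can move to must sweep every row and every column (otherwise a deletion or contraction of the missed line would leave the move intact), deduce that the shape is a square with the cross on the long diagonal and the adjacent diagonals empty, and conclude that the rest of the diagram is rigid and hence consists of NW/SE partitions by Theorem~\ref{thm:NW/SE}. The converse direction and the use of Proposition~\ref{prop:delpipedream} to interpret deletion/contraction on pipe dreams also match.

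The gap is the step you yourself flag as ``the heart of the proof'': you never actually prove that there is only one moveable (bad) cross, and your one-line justification --- ``two bad crosses in distinct rows and distinct columns would require two independent deletions/contractions to kill'' --- does not survive scrutiny. First, it ignores the configuration where the two moveable crosses lie in the \emph{same} column (or row): there a single deletion does kill both, so no contradiction with atomicity arises from that operation, and one must instead exhibit some \emph{other} deletion or contraction that kills exactly one of them. Second, even for crosses in distinct rows and columns, deleting the column of cross $A$ can destroy cross $B$'s move if $B$ can migrate into that column, so ``independence'' of the two required operations is exactly what needs to be proved, not assumed. The paper closes this by an escalation argument: put the two crosses in the same column, try to contract the row of the upper cross $A$; this fails to isolate $A$ only if $B$ can move into that row, which places $B$ in a new easternmost column; deleting that column isolates $B$ unless $A$ can move into it; and so on, forcing an infinite chain of moves to the northeast (and symmetrically to the southwest), impossible in the bounded strip. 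Some such finiteness/escalation argument is indispensable, and without it both the uniqueness of $c_0$ and your subsequent ``$c_0$ reaches every row and column iff the rectangle is square and $c_0$ is central'' claim remain unproved assertions. Your shape argument via Corollary~\ref{cor:box} (filling a line disjoint from the bounding rectangle of the box $B$ leaves $B$ intact) has the same unaddressed issue, since Proposition~\ref{prop:delpipedream} first passes to the pipe dream with the most crosses on that line, which can alter the tiles inside $B$.
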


\begin{proof}

We begin by proving that for an atomic positroid pair, in each of its pipe dreams there must be a single cross that can move. Suppose for contradiction that there exist two crosses that we call cross A and cross B, both with moves. Recall that by the definition of atomicity, this implies that any deletion or contraction must eliminate both crosses. We show that in general we can always eliminate one of the crosses while preserving the other one through a deletion or contraction - unless the pipe dream stretches infinitely in both the northeast and southwest directions, which gives us the contradiction. 
\\

In detail: suppose that cross A is at $(i_1,j_1)$ and cross B at $(i_2,j_2)$. The argument below will use the fact that, if a deletion
eliminates both crosses, then they have to be in the same column (the case of contraction: they have to be in the same row). Without loss of generality, at least the first or second coordinate must differ, or else cross A and cross B are the same cross, so let's take $j_1=j_2$: they are along the same column, and let's take $i_1$ to be above $i_2$ in the section of the pipe dream we are looking at. Now, clearly, deleting along this column eliminates both crosses, but since they are on different rows with cross A above cross B, we could contract cross A, which would leave cross B unchanged - that is, unless cross B has a move northeast to a new position $(i_1,j_3)$ along the same row as A but to its east. Now, we have solved the issue of contraction along $i_1$ eliminating both crosses, but the problem is that B is in a new easternmost column $j_3$ so we can delete $j_3$ to only eliminate B - that is, unless  A has a move northeast to a new position $(i_3,j_3)$ that is in the same column as B but north of B. But now we run into the issue that cross A has a new possible contraction at $i_3$. Thus, continuing on, we would have to do this infinitely in the northeast (as well as analogously in the southwest ) direction in order to preserve atomicity, which is impossible since our pipe dream shapes are bounded. 
\\

Now that we have proven that there is a single cross with a move, let us consider the possible positions of this cross: it must exist along the main diagonal of a square-shaped pipe dream; this is because, if this cross's moves missed any row or column, then we could delete this skipped row or column while leaving the cross (and its moves) intact, contradicting atomicity. Furthermore, this means that we have a clear diagonal along which this cross moves; in other words, the diagonals above and below this diagonal are free of any crosses, otherwise such a cross would hinder the free movement of this cross along the main diagonal (this relies on the fact just proven that no other cross can move): this proves (2). Since the pipe dream shape includes every row and column along a southwest to northeast diagonal, and no other rows/columns, and because in the shapes we are considering the non-convex portions only occur along the southwest and northeast corners, the boundaries of the pipe dreams shape must extend straight from the southeast and northeast corners: thus, the shape must be a rectangle. But since we already showed that every row and column must intersect the main diagonal, the shape in fact must be square. This proves (1). Since we already proved that this is the only cross that can move, everything else must be rigid, and this proves (3). 
\end{proof}

\begin{Prop}\label{prop:atom=sing}
A point $\lambda$ being singular on a positroid variety $\Pi_f$  is equivalent to the pipe dream for $\Pi_f$ on $U_\lambda$ being able to reach an atomic configuration via a series of deletions and contractions (that is, d/c-reduces to an atomic configuration). 
\end{Prop}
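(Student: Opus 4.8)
The plan is to prove the two implications separately; both are essentially bookkeeping on the deletion/contraction poset, and neither needs the normal form of Theorem \ref{thm:atomicpd}. Throughout I would work with positroid pairs and only translate back to pipe dreams at the end: by Proposition \ref{prop:delpipedream} together with Definition \ref{def:dcreduce}(2), ``d/c-reduces'' for pipe dreams realizes exactly the covering relation on positroid pairs, and an ``atomic configuration'' of a pipe dream is precisely the pipe dream of an atomic positroid pair.

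For the direction ``d/c-reduces to atomic $\Rightarrow$ singular'', suppose the pipe dream of $\Pi_f$ on $U_\lambda$ d/c-reduces to an atomic configuration. Unwinding the definitions, this gives a finite chain $(\Pi_f,\lambda) = (\Pi_{f_0},\lambda_0),\ (\Pi_{f_1},\lambda_1),\ \ldots,\ (\Pi_{f_m},\lambda_m)$ in which each pair is obtained from the previous one by a single deletion or contraction and the last pair $(\Pi_{f_m},\lambda_m)$ is atomic, hence singular at its point by definition. I would then walk the chain upward one step at a time, using the contrapositive of Proposition \ref{prop:delsmooth}: if a deletion or contraction of $(\Pi_{f_j},\lambda_j)$ is singular at its point, then $(\Pi_{f_j},\lambda_j)$ is singular at $\lambda_j$. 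Applied repeatedly, starting from the singular $(\Pi_{f_m},\lambda_m)$, this forces $(\Pi_{f_{m-1}},\lambda_{m-1})$ singular and, by downward induction, $(\Pi_f,\lambda) = (\Pi_{f_0},\lambda_0)$ singular.

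For the converse, suppose $(\Pi_f,\lambda)$ is singular. The plan is to build a strictly descending chain of \emph{singular} positroid pairs starting at $(\Pi_f,\lambda)$, and to observe that it must terminate, necessarily at an atomic pair. Put $(\Pi_{f_0},\lambda_0) := (\Pi_f,\lambda)$. Given a singular pair $(\Pi_{f_j},\lambda_j)$ living in some $Gr(k_j,n_j)$: if it is atomic, stop; otherwise, negating the definition of ``atomic'', there is a column $i$ for which the unique $\lambda_j$-preserving operation at column $i$ (deletion when $\lambda_j \in \Pi_{del,i}$, contraction when $\lambda_j \in \Pi_{contr,i}$) yields a positroid pair that is not smooth at its point. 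That resulting positroid variety is a single positroid variety by Proposition \ref{prop:delirred}, and it still contains the (image of the) point because the operation was chosen to preserve it, so ``not smooth'' here already means ``singular''; I take that pair to be $(\Pi_{f_{j+1}},\lambda_{j+1})$. Every step strictly decreases $n$ (a deletion sends $Gr(k,n)$ to $Gr(k,n-1)$, a contraction to $Gr(k-1,n-1)$), so the chain cannot continue forever; when it halts it is, by construction, singular and admits no singular deletion or contraction, i.e.\ it is atomic. Reading the chain back through Proposition \ref{prop:delpipedream} and Definition \ref{def:dcreduce} exhibits a d/c-reduction of the pipe dream of $\Pi_f$ on $U_\lambda$ to this atomic configuration.

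I do not expect a genuine obstacle here: the proposition is, as noted in the text, essentially a restatement of the partial order plus the finiteness of descending chains. The two points that want care are, first, the exact logical form of ``not atomic'' — one must notice that a point-preserving deletion or contraction of a pair that contains its point is automatically nonempty at that point, so the ``(or empty)'' alternative in the definition of atomicity never intervenes and ``not smooth'' collapses to ``singular'', which is what keeps the downward process going; and second, the dictionary between the statement (phrased for pipe dreams) and the argument (run on positroid pairs), for which I would cite Proposition \ref{prop:delpipedream} and Definition \ref{def:dcreduce}(2) as above.
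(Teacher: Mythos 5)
Your proposal is correct and follows essentially the same route as the paper: the upward direction is the repeated contrapositive of Proposition \ref{prop:delsmooth} applied along the reduction chain, and the downward direction is the same descent argument (the paper phrases it as a tree of children whose singular subtrees must terminate in an atomic pair, where you phrase it as a single strictly descending chain of singular pairs terminating because $n$ strictly decreases). Your explicit remark that the point-preserving choice of deletion versus contraction at each column rules out the ``empty'' alternative in the definition of atomicity is a small clarification the paper leaves implicit, but it does not change the argument.
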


\begin{proof}

If $(\Pi_f,\lambda)$ d/c reduces to an atomic configuration, since atomic positroid pairs are singular by definition (minimally singular), and since we already proved in Prop \ref{prop:delsmooth} that the order on deletion and contraction is such that, for a singular element, everything higher up in the order is also singular, $(\Pi_f,\lambda)$ must be singular. 

In the other direction, $\Pi_f$ is finite dimensional, so a finite number of deletions and contractions will take it to the point $\lambda$, which is smooth in itself. Thus, every path going down in the order from a pair $(\Pi_f, \lambda)$ becomes smooth. Suppose $\Pi_f\in Gr(k,n)$. Then there are $k$ contractions (which can be applied to the columns in $\lambda$) and $n-k$ deletions (for the other columns, which cannot be contracted since $U_\lambda$ contains only the $T$-fixed point $\lambda$). Furthermore, suppose $\Pi_f$ is singular. Then these $n$ deletions and contractions are either (1) all smooth, in which case $(\Pi_f, \lambda)$ is atomic, or (2) at least one is singular, which let's call $(\Pi_{f'},\lambda')$. We can apply the same argument to $(\Pi_{f'},\lambda')$, which will have $n-1$ possible deletions and contractions - either it is atomic, or it has a deletion or contraction that remains singular. In this way, we can view each point in the ordering as spawning a tree of children below. By Prop \ref{prop:delsmooth}, every singular point only has singular parents, and so singular points are connected. Because all children become smooth eventually, these connected subtrees of singular points eventually end, with the last point being an atomic pair. See the next figure for an example.

\end{proof}

\begin{Exa}
Here, we provide an example of a part of the ordering. In particular, we show the positroid variety given by siteswap $f=342333$ containing point $\lambda=\{1,2,5\}$. This point is singular, which we have denoted by putting this positroid pair inside a blue rectangle. 

\newpage

 \begin{figure}[htbp]
	\includegraphics[scale=0.65,angle=-90,clip=true]{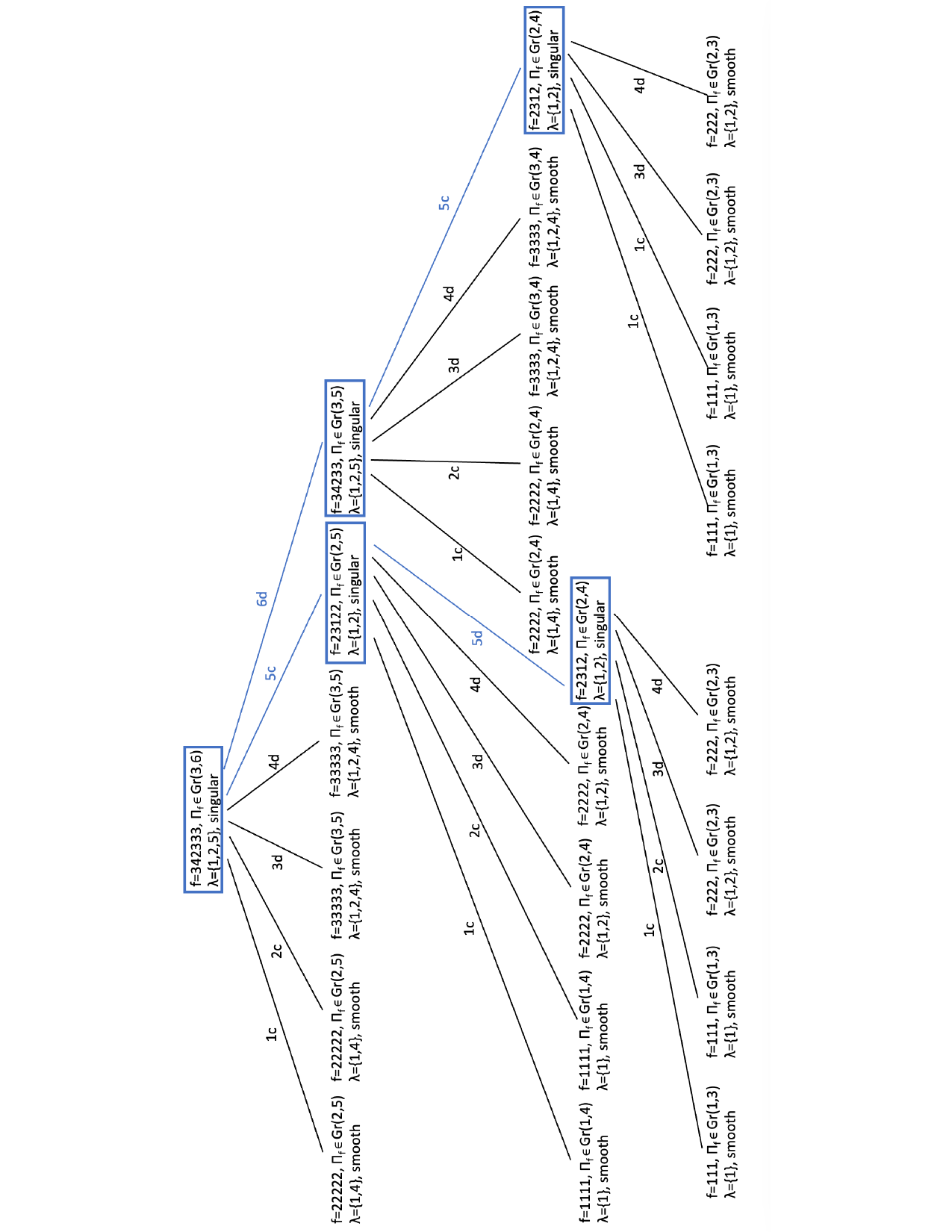}
\end{figure} 

We consider the set of all its deletions and contractions, denoted by the lines, and labeled for example, by ``2c" meaning that we contract the 2nd column or ``5d" meaning that we delete the 5th column. We stop doing further deletions and contractions once we reach a smooth point, since by Proposition \ref{prop:delsmooth}, anything lower in the order is still smooth. We see that the pair $(342333,\{1,2,5\})$ has exactly 2 deletions or contractions that are still singular, which are contraction of the 5th column or deletion of the 6th column. We can continue to delete or contract and obtain something singular until we get to $f=2312, \lambda=\{1,2\}$. Any further deletions or contractions result in a smooth point. Thus, by definition, this is an atomic pair (in fact, the only atomic pair that d\c reduces from $(342333,\{1,2,5\})$). And indeed, if we draw out the pipe dream, we see that $(2312, \lambda=\{1,2\})$ has a pipe dream that matches the form depicted in Theorem \ref{thm:atomicpd}:

 \begin{figure}[htbp]
	\includegraphics[scale=1,clip=true]{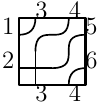}
\end{figure} 

\end{Exa}

We prove one more proposition in this section before making some final remarks.

\begin{Prop}\label{prop:atomrect}
Every atomic positroid pair comes from within a maximal rectangle. More precisely, if we have a pipe dream $\delta$ and there exists a series of deletions and contractions that take it (d/c reduces) to an atomic configuration that we call $\delta_{atomic}$, the process of deletion and contraction will have removed every tile outside of some maximal rectangle $R$ contained in $\delta$ (and most likely, some tiles within $R$ as well). 
\end{Prop}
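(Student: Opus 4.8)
The plan is to induct on the length $m$ of a chain of deletions and contractions $\delta=\delta_0\to\delta_1\to\cdots\to\delta_m=\delta_{atomic}$ realizing the d/c-reduction, with Theorem~\ref{thm:atomicpd} supplying the base case and Proposition~\ref{prop:insidemaxrect} doing the heavy lifting. Two facts hold at every stage. First, by Theorem~\ref{thm:atomicpd}, $\delta_m$ is a $k\times k$ square: it is its own (unique) maximal rectangle, its one movable cross lies on the long SW--NE diagonal, and --- as in the proof of that theorem --- that diagonal meets every row and every column of $\delta_m$. Second, since $\delta_m$ is singular and, by Proposition~\ref{prop:delsmooth}, singular positroid pairs stay singular as one moves up the order, every $\delta_i$ is non-rigid; hence by Proposition~\ref{prop:insidemaxrect} each $\delta_i$ has a cross that can move, and this cross together with all of its move-destinations sits inside some maximal rectangle of $\delta_i$.

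I will prove the following sharpening by downward induction on $i$: there is a maximal rectangle $R_i$ of $\delta_i$ which (a) contains a movable cross and (b) contains every row and every column of $\delta_i$ that survives (is never removed) in the remaining steps $\delta_i\to\cdots\to\delta_m$. The case $i=m$ is the first fact above, with $R_m=\delta_m$. For the step, assume $R_{i+1}$ is in hand and look at the operation $\delta_i\to\delta_{i+1}$: by Proposition~\ref{prop:delpipedream} and the description of the distinguished path it deletes one column or contracts one row --- that is, it removes exactly one segment of the staircase and brute-force-fills the associated column or row with crosses. The rows and columns of $\delta_i$ that survive the rest of the chain are precisely those of $\delta_{i+1}$ that do (the just-removed segment is not among them), so by the inductive hypothesis they all lie in $R_{i+1}$. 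If the removed segment lies in the interior of the maximal rectangle of $\delta_i$ mapping onto $R_{i+1}$, let $R_i$ be that rectangle: it loses only one row or column on passing to $\delta_{i+1}$, it still contains every survivor, and non-rigidity of $\delta_i$ together with Proposition~\ref{prop:insidemaxrect} lets us take it to contain a movable cross. If instead the removed segment was a length-one run whose deletion merged two adjacent runs of $\delta_i$ (so that a row- or column-run of $R_{i+1}$ is a merger of two runs of $\delta_i$), we use that the chain ends at the $\lambda$-consecutive square $\delta_m$: surviving rows cannot be interleaved with a surviving column in $\delta_m$, so the two runs of $\delta_i$ being merged are separated only by segments that are themselves removed later on, and one of the two maximal rectangles of $\delta_i$ flanking them --- enlarged to absorb the removed column or row if necessary --- still contains every survivor; again Proposition~\ref{prop:insidemaxrect} lets us arrange that it carries a movable cross. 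This gives $R_i$.

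Taking $i=0$ yields a maximal rectangle $R:=R_0$ of $\delta$ containing every row and every column of $\delta$ that survives the whole process, so any row or column of $\delta$ not inside $R$ is deleted or contracted somewhere along the way and every tile of $\delta$ outside $R$ is removed, as claimed. (More conceptually: since every row and column of $\delta_m$ crosses the moving diagonal, pulling this back along the chain forces every surviving row and column of $\delta$ to pass through the maximal rectangle carrying the traced-back move, hence to lie in $R$.) I expect the main obstacle to be exactly the ``merger'' case of the inductive step: it needs a clean lemma on how the set of maximal rectangles of a staircase changes when a single path segment is inserted or deleted, together with the verification that such an insertion can split $R_{i+1}$ only by separating segments that are non-surviving --- and this is where the hypothesis that the reduction terminates at an atomic (so square and $\lambda$-consecutive) configuration is essential. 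A lesser nuisance is that deletions and contractions create crosses and can temporarily create new moves; this causes no difficulty, because at each stage we only assert the existence of some suitable maximal rectangle and re-invoke Proposition~\ref{prop:insidemaxrect} rather than follow a single cross.
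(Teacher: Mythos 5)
Your overall strategy can probably be pushed through, but as written the proof has a gap at exactly the point you flag: the ``merger'' case of the inductive step is asserted, not proven. The claim that ``one of the two maximal rectangles of $\delta_i$ flanking them, enlarged to absorb the removed column or row if necessary, still contains every survivor'' is the entire content of that case, and you defer it to an unproved lemma about how the maximal rectangles of a staircase mutate when a path segment is inserted or removed. A secondary problem is condition (a) of your invariant: Proposition \ref{prop:insidemaxrect} places each move in \emph{some} maximal rectangle, not in the particular rectangle $R_i$ you have already chosen to contain the survivors, so ``Proposition \ref{prop:insidemaxrect} lets us arrange that it carries a movable cross'' is not justified. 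Fortunately (a) is also irrelevant to the statement, which only concerns where the surviving tiles sit.

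The idea that closes the gap --- and which lets you dispense with the induction entirely, as the paper does --- is a monotonicity observation: deletion and contraction only ever \emph{remove} rows, columns, and hence positions from the shape; they never create a position. Since $\delta_{atomic}$ is a full square on the surviving rows and columns (Theorem \ref{thm:atomicpd}), every pair (surviving row, surviving column) indexes a position of $\delta_{atomic}$, hence must already have been a position of $\delta$. Equivalently, in the contrapositive form the paper uses: for two surviving tiles $A=(i_1,i_2)$ and $B=(j_1,j_2)$ in NW/SE position, if a corner $(i_1,j_2)$ or $(j_1,i_2)$ of their bounding box were absent from the shape of $\delta$, it would persist as a convex hole forever, because the rows and columns through $A$ and $B$ can never be deleted or contracted without destroying $A$ or $B$; so the final shape could not be a square. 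Once all such corner positions are known to be present, the interval structure of the strip forces the full contiguous rectangle spanned by the surviving rows and columns to lie inside the shape of $\delta$, hence inside a single maximal rectangle. Your induction is re-deriving this one step at a time, and the ``merger'' difficulty evaporates under the same observation: the contiguous rectangle in $\delta_i$ spanned by the rows and columns of $R_{i+1}$ simply reabsorbs the removed column or row, and every one of its cells is already known to be in the shape.
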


\begin{proof}
This can be seen via an argument based on the following figure:

 \begin{figure}[htbp] \centering
	\includegraphics[scale=0.6,clip=true]{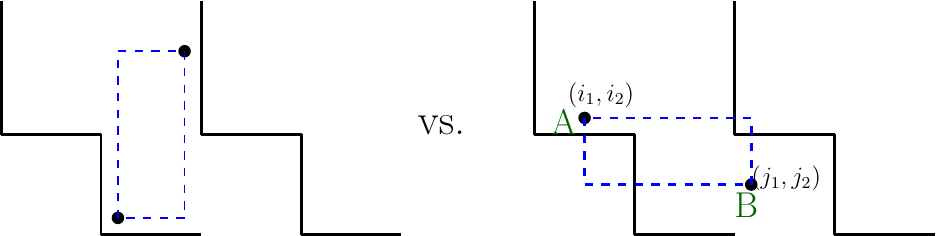}
\end{figure}

On the left side, we see that any pair of tiles that are southwest/northeast of each other must be contained within a maximal rectangle: this follows purely from the shapes we are considering. However, if they are southeast/northwest of each other, this might not be the case. In this case, if northwest tile A has coordinates $(i_1,i_2)$ and southeast tile B has coordinates $(j_1,j_2)$, then they are not contained in a maximal rectangle if either $(i_1,j_2)$ or $(i_2,j_1)$ is not contained in the shape. In such a case, there can be no series of deletions or contractions that put A and B into the same atomic pipe dream. The reason is that we cannot delete row $i_1$ or contract column $i_2$ without eliminating tile A, and we cannot delete row $j_1$ or contract column $j_2$ without eliminating tile B. Therefore, we cannot eliminate the spaces $(i_1,j_2)$ or $(i_2,j_1)$: they will always exist, so there will always be a convex hole, meaning that tiles A and B will never be able to fit into a square shape. This completes the proof of the fact that every atomic positroid pair comes from within a maximal rectangle.
\end{proof}

Let $\delta \in PD_{(\Pi_f,\lambda)}$ be an affine pipe dream for the positroid variety $\Pi_f$ on $U_\lambda$. We recap some of the statements that have been proven in this thesis.

\begin{equation*}
\begin{split} 
&\text{$\delta$ has a move within some maximal rectangle} \\
&\iff \text{$\delta$ has a move (is not rigid)} \\
&\iff \text{$\Pi_f$ is singular} \\
&\iff \text{$\delta$ d/c-reduces to an atomic pair} \\
&\iff \text{$\delta$ d/c-reduces to an atomic pair in some maximal rectangle}
\end{split} 
\end{equation*}

The first equivalence comes from Proposition \ref{prop:insidemaxrect} (one direction is trivial) or alternatively Corollary \ref{cor:box}, the second equivalence comes from Theorem \ref{thm:smoothone} (as well as Prop \ref{prop:BAPmoves}), the third equivalence is the content of Proposition \ref{prop:atom=sing}, and the last equivalence was just shown in Proposition \ref{prop:atomrect}. It might be hoped that, due to the apparent similarity of the first and last statements (both involving a condition within a maximal rectangle), that an equivalence between these two statements could be proved directly, which would produce a different proof of Proposition \ref{prop:insidemaxrect} using atomicity, and thereby yield an alternate proof of the Main Theorem \ref{thm:mainthm}. Unfortunately, the proof is more difficult than it seems. In one direction, going from $\delta$ having a move within some maximal rectangle to showing that $\delta$ d/c reduces to an atomic pair in that same maximal rectangle can be done by arguments involving deleting and contracting all rows and columns except for the positions where two pipes have a cross and near-misses (the argument could be made a bit easier by making use of Corollary \ref{cor:box}). The other direction is more difficult because the process of deletion or contraction occurs by doing all possible moves (in order to make sure that all pipe dreams for a given bounded permutation with the maximum number of crosses along the column/row to be deleted/contracted are obtained). This means that immediately upon invoking the notion of d/c-reduction, we are no longer just considering the single affine pipe dream $\delta$ but rather a possibly very large set of affine pipe dreams. In particular, we can no longer focus in on a single maximal rectangle (e.g. the maximal rectangle inside which we hope the atomic pair arises or the maximal rectangle where we hope to prove there exists a move) because we have to consider a set that includes pipe dreams where a cross-elbow move has shifted to being outside that single maximal rectangle we originally intended to focus on.

\section{Skew Partitions and Schubert Varieties}

A \textbf{skew partition} can be denoted $\nu \backslash \rho$ for two partitions $\nu$ and $\rho$ where $\rho$ is completely contained in $\nu$: see the picture below. It will be easier to think of the partition as being rotated by 45 degrees, so the pipes enter along the bottom and exit through the top:

 \begin{figure}[htbp]
	\includegraphics[scale=0.5,clip=true]{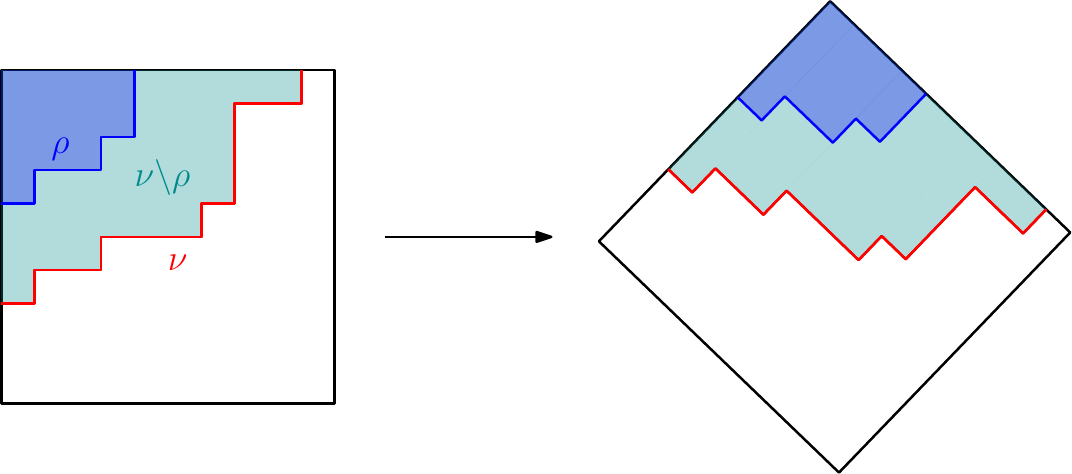}
\end{figure}

\begin{Prop}\label{prop:nonaffineword}
Let $\pi \in S_n$. The following are equivalent:

(1) $\pi$ is 321-avoiding.

(2) $T \curvearrowright X_0^\pi$ contains dilation.

(3) $\pi$ has a unique reduced word up to commuting moves (a unique heap).

(4) The heap is a skew partition.
\end{Prop}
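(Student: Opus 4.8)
The plan is to show all four statements equivalent by proving the chain $(1)\Leftrightarrow(3)$, $(3)\Leftrightarrow(4)$, and $(1)\Leftrightarrow(2)$.

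For $(1)\Leftrightarrow(3)$ I would note that $(3)$ is exactly the statement that $\pi$ is \emph{fully commutative}: all of its reduced words are related by commutations $s_as_b=s_bs_a$ with $|a-b|\ge 2$, so that no braid relation $s_as_{a+1}s_a=s_{a+1}s_as_{a+1}$ is ever needed. By the theorem of Billey--Jockusch--Stanley, and Stembridge's general study of fully commutative elements \cite{Stem}, an element of $S_n$ is fully commutative if and only if it is $321$-avoiding: a $321$-pattern is precisely what permits a reduced word to be rewritten so as to contain a factor $s_as_{a+1}s_a$. For $(3)\Leftrightarrow(4)$ I would quote Stembridge's classification: in type $A$ the heap of a fully commutative element is exactly a skew Young diagram. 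Concretely, placing the occurrence of $s_k$ in ``column $k$'' and using that $s_k$ commutes with every $s_\ell$ except $s_{k\pm1}$, the heap poset is determined by the vertical interleaving of the cells in adjacent columns, and full commutativity forces these to obey exactly the inequalities that define a skew shape $\nu\backslash\rho$; conversely every skew shape arises this way. I would add one line checking that this heap matches the rotated picture of $\nu\backslash\rho$ drawn above. If one did not want to cite \cite{Stem}, this is the step that would take the most bookkeeping, so I expect it to be the main obstacle.

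The substantive new part is $(1)\Leftrightarrow(2)$. First I would identify the $T$-weights on the Schubert cell $X_0^\pi=B\pi B/B$ (matching the notation $X_\circ^{v(\lambda)}$ used earlier): this cell is $T$-equivariantly an affine space, linear with weights equal to those on its tangent space at $\pi B/B$, namely the roots $\varepsilon_i-\varepsilon_j$ with $1\le i<j\le n$ and $\pi^{-1}(i)>\pi^{-1}(j)$. ``$T\curvearrowright X_0^\pi$ contains dilation'' then says: there is a cocharacter $z\mapsto\mathrm{diag}(z^{c_1},\dots,z^{c_n})$ of $T$ with $c_i-c_j=1$ for every such weight. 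To produce it when $\pi$ is $321$-avoiding, set $\sigma:=\pi^{-1}$, which is again $321$-avoiding (inversion reverses a $321$-pattern), and define a partial order on $\{1,\dots,n\}$ by $i\prec j$ iff $i<j$ and $\sigma(i)>\sigma(j)$. This relation is transitive, and a chain $i_1\prec\cdots\prec i_m$ is a strictly decreasing subsequence of $\sigma$, hence has length $m\le 2$. In a poset of height $\le 2$ every non-maximal element is minimal, so $i\prec j$ forces $i$ non-maximal and $j$ non-minimal (hence maximal); putting $c_i:=1$ if $i$ is non-maximal and $c_i:=0$ otherwise gives $c_i-c_j=1$ on each weight $\varepsilon_i-\varepsilon_j$, the desired dilating cocharacter. (I would remark that this recovers, localized at fixed points, the earlier observation that Grassmannians are cominuscule.)

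For the converse $(2)\Rightarrow(1)$ I would argue the contrapositive: if $\pi$ contains a $321$-pattern then so does $\sigma=\pi^{-1}$, say at positions $i_1<i_2<i_3$ with $\sigma(i_1)>\sigma(i_2)>\sigma(i_3)$, so that $\varepsilon_{i_1}-\varepsilon_{i_2}$, $\varepsilon_{i_2}-\varepsilon_{i_3}$ and their sum $\varepsilon_{i_1}-\varepsilon_{i_3}$ are all $T$-weights on $X_0^\pi$; then no cocharacter can equal $1$ on all three, so dilation fails. Putting the pieces together yields the equivalence of $(1)$--$(4)$. The only genuinely delicate points are the heap/skew-shape dictionary in $(3)\Leftrightarrow(4)$ and pinning down the root-system conventions in $(1)\Leftrightarrow(2)$; once the weight set is correctly described and the height-$\le2$ poset is in hand, the dilation argument itself is a couple of lines.
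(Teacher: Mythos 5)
Your proposal is correct, and the interesting part --- the equivalence $(1)\Leftrightarrow(2)$ --- is essentially the argument the paper gives, only better organized: the paper also identifies the weights on $X_0^\pi$ with the inversion roots, derives the inconsistent system $x_j-x_i=x_k-x_j=x_k-x_i=1$ from a $321$-pattern, and in the converse direction assigns $0$'s and $1$'s according to whether an index occurs as the ``higher'' or ``lower'' member of a descent, observing that $321$-avoidance prevents it from being both. Your height-$\le 2$ poset on $\{1,\dots,n\}$ is a cleaner way to say exactly that, and passing to $\pi^{-1}$ versus $\pi$ is immaterial since inversion preserves $321$-avoidance. Where you genuinely diverge is on the combinatorial equivalences: you route through $(1)\Leftrightarrow(3)\Leftrightarrow(4)$ by citing Billey--Jockusch--Stanley and Stembridge's classification of fully commutative heaps in type $A$ as skew shapes, whereas the paper proves $(1)\Leftrightarrow(3)$ and $(1)\Leftrightarrow(4)$ from scratch with wiring-diagram arguments (a braid move is a triangle of three mutually crossing pipes, hence a $321$-triple; a non-skew heap forces a concave notch that produces three mutually crossing pipes; a skew shape splits the pipes into two parallel classes so no three can mutually cross). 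The citation route is shorter and standard; the paper's self-contained pipe arguments have the advantage that the same pictures reappear in the affine setting used elsewhere in the paper, and the paper explicitly notes which steps survive the passage to $\hat S_n$ --- something worth keeping in mind if you intend your version of $(3)\Leftrightarrow(4)$ to extend beyond finite type, where Stembridge's type-$A$ classification would need to be replaced by its affine analogue.
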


\begin{proof}

We prove the equivalence of (1) with (2), (3) and (4). 

First, we show $(1) \Leftrightarrow (2)$. Recall that we called (2) the condition of being cominuscule, and this is the property that there exists a circle inside the torus that acts by dilation. This means that this circle acts with all weights one. Since the roots of $X_0^\pi$ correspond to all inversions in $\pi$ (that is, a pair $1\leq i<j \leq n$ such that $\pi(i)>\pi(j)$ implies a root $x_j - x_i$), this implies that we can simultaneously set all $x_j - x_i=1$, for all  $1\leq i<j \leq n$ such that $\pi(i)>\pi(j)$.
Note that, dually, there is a map from the weight lattice of the torus to the weight lattice of the circle, which is the integers. Since we want all our roots to go to 1, this means that all the roots have to lie on a hyperplane, specifically the hyperplane of things that go to 1 under this dual map. 

For the $(\Rightarrow)$ direction, if $\pi$ is not 321-avoiding, then there exists $i<j<k$ such that $\pi(i)>\pi(j)>\pi(k)$. This would imply that we would need to find $x_i, x_j, x_k$ satisfying the 3 equations: $x_j-x_i=1$,  $x_k-x_j=1$,  $x_k-x_i=1$, and clearly this is inconsistent, since if we add the first two equations we get $x_k-x_i=2$.

For the $(\Leftarrow)$ direction, suppose $\pi$ is 321-avoiding. This means that for any $1\leq k\leq n$, if we find a $j<k$ such that $\pi(j)>\pi(k)$, then there will be no $i<j$ such that $\pi(i)>\pi(j)$. Therefore, we can set $x_k =1$ and $x_j=0$ so that $x_k-x_j=1$. Note that $k$ cannot descend to the right (that is, we will be unable to find a $k<l$ such that $\pi(k)>\pi(l)$, because then $j,k,l$ would form a 321-triple), and similarly $j$ cannot ascend to the left (since as we already noted, $i<j$ such that $\pi(i)>\pi(j)$ would result in $i,j,k$ forming a 321-triple); this proves that we can assign $0$'s and $1$'s to all $x_i$ to satisfy all roots equalling $1$ consistently, since $x_i$ participates in each descent only as the higher or lower number, not both. 

None of these arguments relied on the permutation being finite, so the equivalence of (1) and (2) also holds in the affine case $\hat{S}_n$.

Next, we show $(1) \Leftrightarrow (3)$. First, note that (3) is equivalent to the statement that there are no braid moves in any reduced word for $\pi$, where the braid moves is defined as $s_i s_{i+1} s_i = s_{i+1} s_i s_{i+1}$. This follows from Tits' Theorem, that any word for a permutation can be obtained from any other word via (1) commuting moves or (2) braid moves. Since we allow commuting moves, the statement of (3) is that there are no braid moves. Second, note that (1) is equivalent to there not existing any pattern of the form pictured below:
 \begin{figure}[htbp]
	\includegraphics[scale=0.5,clip=true]{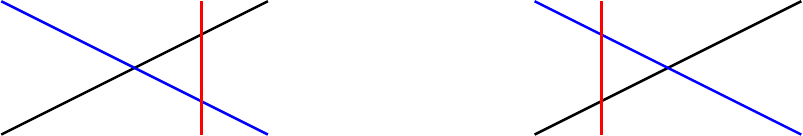}
\end{figure}
The reason is because if $\pi$ is not 321-avoiding, then it must contain a set of 3 pipes $i,j,k$ such that $\pi(i)>\pi(j)>\pi(k)$. Another way to state this pattern is that there is a set of 3 pipes such that each one intersects the other two. 

For $(\Rightarrow)$, if (3) is not true, then it contains a braid move, so its reduced word (up to commuting moves) allows for a braid move; that is, there is a combination $s_i s_{i+1} s_i$ or $s_{i+1} s_i s_{i+1}$. These two patterns correspond to the figure above, and since reducedness implies that double-crossings are not allowed so the pipes cannot uncross (stay in this position relative to each other), the permutation is not 321-avoiding. 

For $(\Leftarrow)$, we noted above that (1) is equivalent to not being able to find a set of three pipes such that any one intersects the other two, as in the figure above. Note that if we do spot such a pattern, we can search for the smallest one; in other words, we can make sure that there are no pipes that, for example in the figure on the left, intersect the blue and black pipes to the right of their intersection point and to the left of the red pipe, or another pipe northeast of the blue pipe and below the red-black intersection. This implies that, for this set of 3 pipes forming the smallest triangle, there will be no other pipes entering the triangle, so that in their heap diagram, they will be read as $s_{i+1} s_i s_{i+1}$ if it looks like the first diagram, and $s_i s_{i+1} s_i$ if it looks like the second diagram. 

Next, we show $(1) \Leftrightarrow (4)$

For $(\Rightarrow)$, we assume that $\pi$ is 321-avoiding. Assume for contradiction that the heap is not a skew partition. Consider the following figure:

 \begin{figure}[htbp]
	\includegraphics[scale=0.6,clip=true]{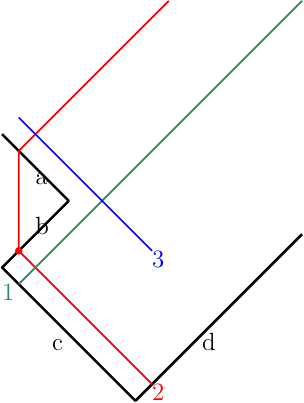}
\end{figure}

\newpage

The fact that it is not skew implies that there is an inward, nonconvex opening, either along the left or the right. There must be a first such opening (this would be a place where a bottom boundary of form (1) NW-SE touches a boundary of form (2) SW-NE such that the NW corner of (1) joins the SW corner of (2)). We displayed this opening with the 2 boundaries labeled $a$ and $b$. The (1) NW-SE boundary is labeled $c$, and since of course the non-skew partition must close up, $c$ must touch another boundary of form (2) SW-NE on its right, which we have labelled $d$. Along the edge $b$, there must be a first pipe entering (hitting the red dot), and we have colored this pipe in red and labelled it $2$. After it hits $a$, it must continue northeast until it hits the top boundary. There must be a first pipe emerging from $c$, which we have colored green and labeled $1$. It must also continue northeast, parallel to $2$, until it hits the top boundary. Finally, perpendicular to pipes $1$ and $2$ must be a pipe $3$ that we have labelled in blue. Clearly, in order to cross them perpendicularly, it must start later than both $1$ and $2$, and since it crosses perpendicularly moving left, it must end earlier than both $1$ and $2$. Therefore, pipes 1, 2, and 3 form a 321 triplet. Since these arguments were general, only assuming the most general property of being non-skew, this proves that $\pi$ is not 321-avoiding. 

For $(\Leftarrow)$, we show that skew partitions are 321-avoiding. We noted above that being 321-avoiding is equivalent to not having the diagram above where every one of 3 pipes intersects the other 2. 
Assume that we have a skew partition. Consider pipes coming in along the bottom edge. They enter in 2 types of ways: along diagonal lines that go (1) NW-SE, and diagonal lines that go (2) SW-NE:

 \begin{figure}[htbp]
	\includegraphics[scale=0.5,clip=true]{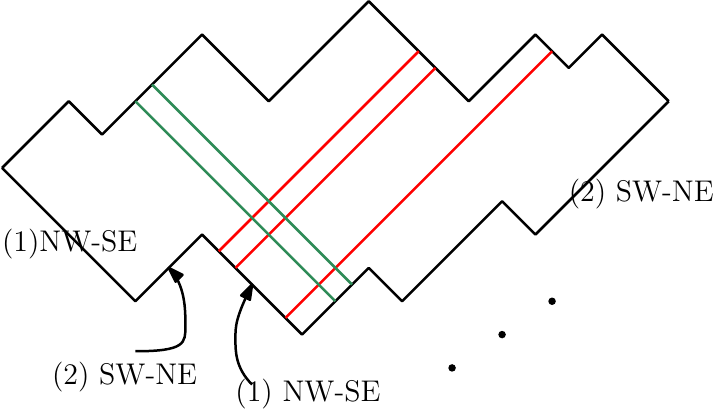}
\end{figure}

In the figure, we have also displayed with some example pipes, that the (red) pipes coming from the (1) NW-SE bottom boundary move northeast, and the the (red) pipes coming from the (1) SW-NE bottom boundary moves northwest. All pipes coming from (1) NW-SE are parallel, and never cross. Additionally, reading from left to right, they are in the same order along the bottom as the top. The same is true for the (2) SW-NE pipes. Thus, in a skew partition being all pipes can be divided into 2 classes, each of which consists of lines parallel to all other pipes within its class. Let's call the classes $A$ and $B$. All pipes in class $A$ are parallel, so can only intersect pipes of class $B$. All pipes in class $B$ are parallel, so can only intersect pipes in class $A$. Let's pick an arbitrary pipe $a$, which, without loss of generality, is in class $A$. If $a$ intersects a pipe, this intersected pipe must be a pipe $b$ in class $B$. Now, in order to violate the 321-avoiding condition, we must have a pipe that intersects both $a$ and $b$. However, every pipe must be in class $A$ or class $B$. If it is in class $A$, then it can only intersect $b$ and not $a$; if it is in class $B$ then it can only intersect $a$ and not $b$. Thus, we have a contradiction. Note that this direction generalizes easily to the affine case as well.

\end{proof}

\begin{Prop}\label{prop:flagscorollary}
Let $Fl(n)$ denote the variety of flags in $\C^n$. Let $w\in S_n$ denote a permutation on $n$ elements so that $X_w$ denotes a Schubert variety in this flag manifold, and let $v\in S_n$ be 321-avoiding with $v\geq w$ in Bruhat order. Then $X_w$ is smooth at $v$,  if and only if there exists a pipe dream for $w$ inside $v$'s skew partition (defined above Prop \ref{prop:nonaffineword}) such that all maximal rectangles reduce to NW/SE partitions. (In this case, the pipe dream will be unique.) 
\end{Prop}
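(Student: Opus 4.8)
The plan is to mirror the proof of the Main Theorem \ref{thm:mainthm}, replacing affine pipe dreams on the periodic strip $U_\lambda$ by ordinary pipe dreams on the skew partition which is the heap of $v$. There are two things to establish: (i) $X_w$ is smooth at $v$ if and only if there is a unique pipe dream for $w$ inside $v$'s skew partition (the flag-variety analogue of Theorem \ref{thm:smoothone}); and (ii) such a pipe dream is unique (rigid) if and only if every maximal rectangle in it reduces to NW/SE partitions (the combinatorial content, namely Proposition \ref{prop:insidemaxrect} together with Theorem \ref{thm:NW/SE}).

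For (i), I would run through the chain of equalities $(*)$ used to prove Theorem \ref{thm:smoothone}. The local model of $X_w$ at $v$ is the Kazhdan--Lusztig variety $X_w \cap X_\circ^v$ inside the affine cell $X_\circ^v \cong \mathbb{A}^{\ell(v)}$; this is the finite-type, non-affine instance of the same setup. Since $v$ is $321$-avoiding, Proposition \ref{prop:nonaffineword} supplies a circle $\C^\times \subseteq T$ acting on $X_\circ^v$ by dilation (condition (2) there), so $X_w \cap X_\circ^v$ is a cone and Rossmann's Theorem \ref{thm:Rossmann} applies, giving $\operatorname{Mult}(v \in X_w)$ as the coefficient of $h^{\codim}$ in the image of $[X_w]|_v$ under $H^*_T \to H^*_{\C^\times}$. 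The AJS/Billey formula (Part (3) of $(*)$) writes $[X_w]|_v = \sum_R \prod_{r \in R}\hat\beta_r$ over reduced subwords $R$ of a reduced word $Q$ for $v$ with $\prod R = w$. Because $v$ is $321$-avoiding it has a unique heap, and by Proposition \ref{prop:nonaffineword}(4) that heap is exactly $v$'s skew partition; hence these subwords $R$ biject with pipe dreams for $w$ inside the skew partition. Finally each $\hat\beta_r$ is an inversion root of $v$, so under the dilation map it restricts to $\pm h$ and each summand becomes $h^{\codim}$; the total is $(\#\text{ pipe dreams})\cdot h^{\codim}$. By \cite{Hartshorne} Exercise 5.3a this means $X_w$ is smooth at $v$ iff the number of pipe dreams is $1$, while $v \ge w$ in Bruhat order guarantees that number is at least $1$ (a reduced word for $w$ occurs as a subword of a reduced word for $v$).

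For (ii), the key observation is that the heap of a $321$-avoiding $v$, drawn rotated $45^\circ$ so pipes run from the bottom edge to the top edge, has all of its non-convex (``jagged'') boundary in the northeast and southwest --- precisely the geometric feature of the affine pipe dream shapes used in Proposition \ref{prop:insidemaxrect}. Hence every cross--elbow move in a pipe dream on a skew shape is contained in a maximal rectangle, exactly as there. Theorem \ref{thm:NW/SE} and the characterization of rigidity (Proposition \ref{prop:Le}) are statements purely about rectangular pipe dreams, so they apply verbatim; in particular ``reduces to NW/SE partitions'' is equivalent to ``rigid'' for a rectangle, and the connectivity of the subword complex (Proposition \ref{prop:BAPmoves}, whose proof uses only that $Q$ is a word, in particular valid for a reduced word of $v$ in finite type) shows that ``unique pipe dream'' and ``the pipe dream is rigid'' coincide. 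Stringing these together: $X_w$ smooth at $v$ $\iff$ unique (equivalently, rigid) pipe dream for $w$ inside $v$'s skew partition $\iff$ every maximal rectangle in that pipe dream reduces to NW/SE partitions; and in the smooth case uniqueness is automatic, since a pipe dream reducing to NW/SE partitions has every maximal rectangle rigid and all moves live in maximal rectangles, so it is rigid, hence the only one.

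The main obstacle is inside step (i): one must verify that the reduced subwords $R$ for $w$ sitting inside a reduced word for the $321$-avoiding $v$ really do correspond to pipe dreams in $v$'s skew partition with the crosses in the expected positions, and --- for the dilation step --- that the corresponding $\hat\beta_r$ are the inversion roots (equivalently, each is $x_{\mathrm{row}} - x_{\mathrm{col}}$ for the appropriate boundary labels). This amounts to re-doing, in the skew-partition heap, the ``southwest reading rule'' computation of Proposition \ref{prop:AJSrowcolform}. It should go through because a skew partition is a heap with the same commutation/adjacency structure used there, but this is the one place where the argument is a genuine recomputation rather than a formal transcription of the positroid case.
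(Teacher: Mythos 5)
Your proposal is correct and follows essentially the same route as the paper: invoke Proposition \ref{prop:nonaffineword} (parts (2)--(4)) to get the dilation circle and the skew-partition heap, run parts (2)--(4) of equation $(*)$ with Rossmann's theorem and AJS/Billey to equate multiplicity with the pipe-dream count, and then import the combinatorial content of the Main Theorem (Proposition \ref{prop:insidemaxrect} and Theorem \ref{thm:NW/SE}). Your closing remark that the southwest-reading computation of Proposition \ref{prop:AJSrowcolform} must be re-verified in the skew shape is a fair point of care that the paper's own proof passes over silently, but it does not change the argument.
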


\begin{proof}
We assumed that $v\in S_n$ is 321-avoiding, so we can use Proposition \ref{prop:nonaffineword}. By (3) of this proposition, $v$ has a unique heap, and by (4) of the same proposition, this heap is a skew partition, so $v=\nu/ \rho$ for some partitions $\nu$ and $\rho$. Then $w$ be given by a subword inside $\nu / \rho$. We can furthermore apply equation (*) in the beginning part of Subsection \ref{subsect:smooththm} on the Smoothness Theorem; part (1) is the only part that deals with positroid varieties in the Grassmannian, so parts (2), (3) and (4) of equation (*) relate the T-equivariant cohomology $[X_{w} \cap X_\circ^{v} \subseteq X_\circ^{v}]$ inside the affine flag variety to the number of pipe dreams via the AJS/Billey formula. This allows us to take advantage of the combinatorial content of the Main Theorem \ref{thm:mainthm}, specifically Proposition \ref{prop:insidemaxrect} that all moves exist within maximal rectangles and Theorem \ref{thm:NW/SE} that all rigid rectangles reduce to NW/SE partitions. Therefore, if all maximal rectangle within the pipe dream for $w$ inside $\nu/ \rho$ reduce to NW/SE partitions, then the pipe dream for $w$ inside $\nu/\rho$ is rigid.

Getting the multiplicity as the coefficient of $h^{\codim X}$ required the use of Theorem \ref{thm:Rossmann} from Rossmann, which required the existence of a circle within $T$ acting by dilation (cominuscule), and this is exactly the content of Proposition \ref{prop:nonaffineword} part (2). Therefore, the algebraic geometry statement of the Main Theorem applies as well; in other words, we can conclude that if all maximal rectangles for the pipe dream of $w$ inside $\nu/ \rho$ reduce to NW/SE partitions, then $X_w$ is smooth at $v$.

\end{proof}

One then wonders whether the corresponding statement is true in more generality in the affine case as well. Unfortunately, it is not true. The issue is that it is possible for two pipes $A$ and $B$ that cross and near-miss combination on a cylinder to be such that, when drawn as an infinitely long strip on a page, the near-miss for example involves pipes $A$ and $C$ instead of pipe $B$. In particular, this implies that Proposition \ref{prop:insidemaxrect} no longer holds. Such a counterexample requires a tile from which we can move both south and west, as well as north and east (in perpendicular direction) and reach the same letter $s_i$ on the same square; this occurs for the case of $n$ even when there is a tile from which one can move an additional $\frac{n}{2}$ tiles in the NE and NW directions or SE and SW directions (or $\frac{n-1}{2}$ and $\frac{n+1}{2}$ in the odd case). However, in the case of \cite{Sni}, the shapes are bounded of height $k$ and width $n-k$, so the maximum a tile could move would be $k-1$ squares NW and $n-k-1$ squares NE. Thus, there is always a rectangle containing any cross and near-miss combination of pipes. We hope to comment more about the affine case in a companion paper \cite{Flu} to be released soon, which will focus more on computations and examples, as well as connections to several other notions in the literature.

\section{Appendix A}

In this appendix, we include the details of how to calculate the siteswap of $del_i(\Pi_f)$, the positroid variety obtained from $\Pi_f$ by deleting the $i$-th column. \cite{SOh} already gave an algorithm for doing deletion and contraction on decorated permutations and Grassmann necklaces; here we describe a pictorial way to do this for siteswaps. 
\\

It is perhaps easiest to see what is going on diagrammatically:

 \begin{figure}[htbp]
	\includegraphics[scale=0.6,clip=true]{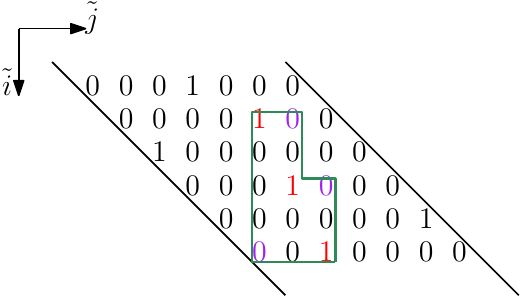}
\end{figure}

Or, step by step:

 \begin{figure}[htbp]
	\includegraphics[scale=0.6,clip=true]{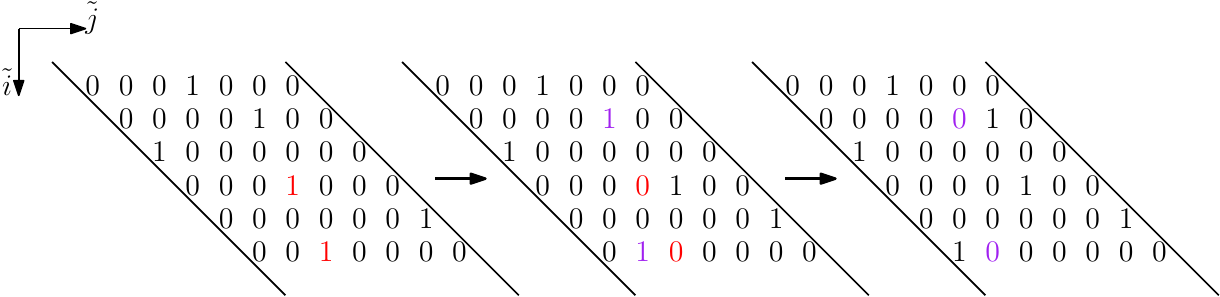}
\end{figure}

The L-shaped box on top represents a successive process of inversions that move the $1$ in the $i$th row left, until it lands on the left diagonal line (which is the $(i,i)$ position in the $\tilde{i}$-$\tilde{j}$ coordinate system). Thus, the last inversion performed will involve the $1$ that is directly north of this $(i,i)$ spot to bring it south. The case of contraction is analogous, except that the goal is to move the $1$ in the $i$th row rightwards, until it lands on the $(i,i+n)$ position. At every step, we look at the previous row for deletion (or next row for contraction), and see if there is a $1$ lying between the value $i$ on the $\tilde{j}$ axis, and where the $1$ on the $i$-th row currently is; if there is, we do the inversion. This is simply the statement that we are doing the minimal number of weak Bruhat moves to get from our original affine permutation to one that has a $0$ in the $i$-th slot. This results in the largest positroid variety contained in the original one that has a $0$ in the $i$-th slot.  
\\

Here is a more algorithmic way of writing what we have just described in words: 

Let $g$ denote the bounded affine permutation, and $f$ its corresponding siteswap. For deletion, start with $i$ and $g(i)$, we want $g(i)=i$ since this means $f(i)=0$. Test $i-1$, $i-2$,... successively and if necessary perform the following operation until the goal $g(i)=i$ is reached. So we begin by testing $i-1$: if $i<=g(i-1)<g(i)$, then transpose $i$ and $i-1$: define the new $g(i):=g(i-1)$ and $g(i-1):=g(i)$. (Whenever such an inversion is performed, $g(i)$ is a little closer to $i$.) Continue with $i-2$ (i.e. the new condition to be tested is: $i<=g(i-2)<g(i)$) and so on until $g(i)=i$ is reached. 
\\

For contraction, start with $i$ and $g(i)$, we want $g(i)=i+n$ since this means $f(i)=n$. Test $i+1$, $i+2$,... successively and if necessary perform the following operation until the goal $g(i)=i+n$ is reached. So we begin by testing $i+1$: if $g(i)<g(i+1)<i+n$, then transpose $i$ and $i+1$: define the new $g(i):=g(i+1)$ and $g(i+1):=g(i)$. (Whenever such an inversion is performed, $g(i)$ is a little closer to $i+n$.) Continue with $i+2$ (i.e. the new condition to be tested is: $g(i)<g(i+2)<i+n$) and so on until $g(i)=i+n$ is reached. 
\\

The proof that this is the correct process follows right from the proof of Proposition \ref{prop:del2def}. There, we showed that the deletion of the $i$th column of a positroid variety $\Pi_f$ is a single positroid variety $\Pi_{f'}$, so is described by a siteswap: $\Pi_{f'}=\Pi_{del_i(f)}$. We showed there that this $\Pi_{del_i(f)}$ is the largest positroid variety $\Pi_{f'}$ contained in $\Pi_f$ such that $f'(i)=0$. We know that for $\Pi_{f'}$ to be a positroid variety contained in $\Pi_f$ is equivalent to $f'$ being reachable by a series of Bruhat moves from $f$, and each such weak Bruhat move decreases the dimension of the positroid variety by 1. Since we want the largest such positroid variety, we want the minimal number of such moves. This is exactly what the diagram in this Appendix does: it depicts pictorially the minimal number of Bruhat moves to get $f'(i)=0$; thus, it produces the largest positroid variety contained in $\Pi_f$ with $f'(i)=0$.

\section{Appendix B}

Here we give a self-contained introduction to affine pipe dreams. We do an explicit example to illustrate the process of taking in a pair $(\Pi_f, U_\lambda)$ and associating a reduced subword to it. Also, in the introduction, we noted that one way to test whether a positroid variety has a unique pipe dream on an open patch (and thus is smooth there) is to draw the top and bottom pipe dream for that positroid variety on that open patch. If they are the same, then the pipe dream is unique so the positroid variety is smooth there. If the pipe dreams are different, then the positroid variety is singular there. \cite{Sni} notes in Section 4.3.1 that the top pipe dream comes from the lexicographically first reduced word, and the bottom pipe dream comes from the lexicographically last word. We noted that the details would be included in this appendix. 
\\

Let's take an example of a siteswap $2225377$ (bounded affine permutation $3,4,5,9,8,13,14$) inside Gr(3,7), and let's test it on $\lambda=\{2,5,7\}$. 
\\

Consider the following figure:

 \begin{figure}[htbp]
	\includegraphics[scale=0.7,clip=true]{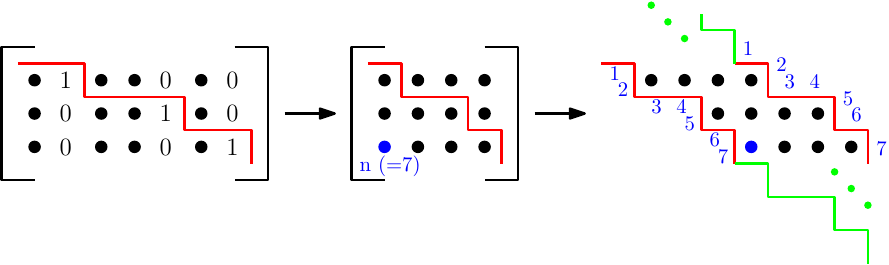}
\end{figure}

(1) In the first figure on the very left, the distinguished path (illustrated in red) is formed by moving horizontally starting from the northwest corner of the matrix until a $1$ in one of the identity columns is encountered, at which point the distinguished path moves down. Thus, the distinguished borders the north and east of each $1$ and otherwise is horizontal. \\
(2) In the middle figure, we collapse the identity columns to obtain the distinguished path. The tiles will be filled by transpositions $s_i$. Here, we have highlighted in blue the southwesternmost square. This will be $s_n$ (in the depicted example, $s_n = s_7$). \\
(3) Finally, we take one copy of the distinguished path and move it to the right (depicted also in red in the figure on the very right). We can continue adjoining distinguished paths (depicted in green) to the northwest and southeast of these two red distinguished paths, forming an infinite strip, which is the boundary of the affine pipe dream. Note the labelling along the boundary: it simply starts with ``$1$" at the top line segment of the distinguished path and increases moving downwards. Thus, we see why the blue dot will be the transposition $s_7$ (or just ``$7$" in the next figure): a cross where the blue dot is will switch the $7$-th and $8=1(\bmod 7)$-th pipes coming from the bottom boundary.

Filling in the rest of the tiles, we get: 

 \begin{figure}[htbp]
	\includegraphics[scale=0.8,clip=true]{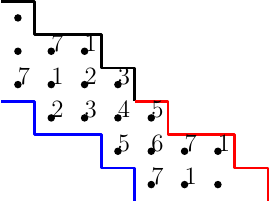}
\end{figure}

If we just look at one block, denoting the transpositions by $s_i$ again, it is:
\\

 \begin{figure}[htbp]
	\includegraphics[scale=0.7,clip=true]{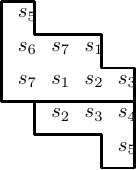}
\end{figure}

To get the bottom pipe dream, we have to pick a way of reading this. There are many equivalent ways of reading it, corresponding to the southwest rule in the proof of Prop \ref{prop:AJSrowcolform}, but if we want to read consistently row-by-row or column-by-column in each block, the choices are depicted below: 
\\

 \begin{figure}[htbp]
	\includegraphics[scale=0.7,clip=true]{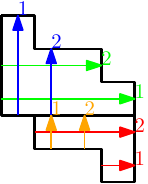}
\end{figure}

In other words, if we choose the direction of red followed by green, then the ordering will be $s_5, s_2 s_3 s_4, s_7 s_1 s_2 s_3, s_6 s_7 s_1, s_5$ (note that this is the standard choice for the word $Q_\lambda$ given in \cite{Sni}). If we apply this word to $(k+1,k+2,...k+n)$, then we get the bounded affine permutation corresponding to $+n$ for the $\lambda$ columns $2,5,7$ and $+0$ for all other columns; explicitly: $Q_\lambda(4,5,6,7,8,9,10)=(0,7,0,0,7,0,7)$. This shows that this pipe dream shape represents the neighborhood $U_\lambda$.
\\

Here's how reading it this way produces a word for siteswap $2225377$ (bounded affine permutation $3,4,5,9,8,13,14$):
\\

\begin{center}
\begin{tabular}{c|c|c|c|c|c|c|c}
1 & 2 & 3 & 4 & 5 & 6 & 7 & 8(=1) \\ \hline
3 & 4 & 5 & 9 & 8 & 13 & 14
\end{tabular}
\end{center}

Since the big word is $s_5, s_2 s_3 s_4, s_7 s_1 s_2 s_3, s_6 s_7 s_1, s_5$, we find the first $s_i$ (starting from the left with $s_5$) where there is a transposition with $f(i)>f(i+1)$. Then we can apply $s_i$ to switch $f(i)$ and $f(i+1)$. Here, the first one is $s_4$ which switches the $98$ to $89$:
\\

\begin{center}
\begin{tabular}{c|c|c|c|c|c|c|c}
1 & 2 & 3 & 4 & 5 & 6 & 7 & 8(=1) \\ \hline
3 & 4 & 5 & 9 & 8 & 13 & 14 & \\ \hline
3 & 4 & 5 & 8 & 9 & 13 & 14 & 10 \\
\end{tabular}
\end{center}

We continue on until we get the most general permutation:
\\

\begin{center}
\begin{tabular}{c|c|c|c|c|c|c|c}
1 & 2 & 3 & 4 & 5 & 6 & 7 & 8(=1) \\ \hline
3 & 4 & 5 & 9 & 8 & 13 & 14 & \\ \hline
3 & 4 & 5 & 8 & 9 & 13 & 14 & 10 \\ \hline
7 & 4 & 5 & 8 & 9 & 13 & 10 & 14 \\ \hline
4 & 7 & 5 & 8 & 9 & 13 & 10 & 11 \\ \hline
4 & 5 & 7 & 8 & 9 & 13 & 10 & 11 \\ \hline
4 & 5 & 7 & 8 & 9 & 10 & 13 & 11 \\ \hline
6 & 5 & 7 & 8 & 9 & 10 & 11 & 13 \\ \hline
5 & 6 & 7 & 8 & 9 & 10 & 11 & 12 \\
\end{tabular}
\end{center}

Keeping track of which transpositions we used, we get:

 \begin{figure}[htbp]
	\includegraphics[scale=0.7,clip=true]{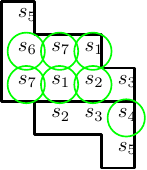}
\end{figure}

If we put crosses where the circles are, and elbows elsewhere, then we get the bottom pipe dream.
\\

In contrast, the top pipe dream corresponds to the pipe dream that we would obtain if we applied the same process as above, but where we make a few changes: \\
(1) We must use the inverse permutation. We write this as the numbers $g^{-1}(i) +n $.\\ 
(2a) We could change change the $s_i$'s such that they correspond to the labelling along the top boundary of the pipe dream shape rather than the bottom (this is depicted in the bottom row of the following figure). We will then read the word in the opposite ordering to the one above; in fact, any ordering obeying a northwest rule rather than a southwest rule (see the proof of Prop \ref{prop:AJSrowcolform}) will work. If we want to read in a consistent direction within each block (above or below the distinguished path), the choices are shown in the colored arrows in the next figure. \\
(2b) Alternatively to (2a) (but essentially equivalent), we could use Grassmannian duality to reflect the pipe dream shape across the northwest-southeast line, before going through with the same process of finding the bottom pipe dream; obviously, the changes of the $s_i$'s to match the (originally) top boundary happens automatically here. Finally, after doing this, we have to reflect back across the northwest-southeast line to get the top pipe dream.  

 \begin{figure}[htbp]
	\includegraphics[scale=0.7,clip=true]{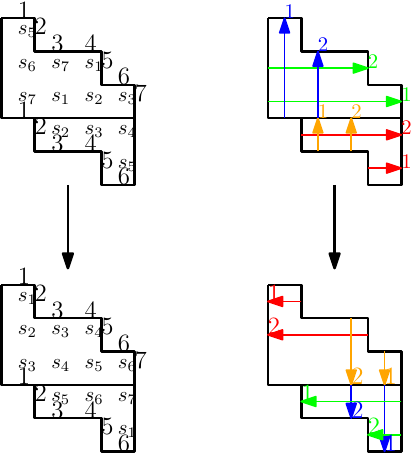}
\end{figure}

\section{Acknowledgements}
The author would like to thank his advisor Allen Knutson for many helpful discussions and for suggesting the approach.

\bibliographystyle{alpha}

\end{document}